\documentclass[12pt]{article}
\usepackage{times}
\usepackage{booktabs}
\usepackage{pifont}
\usepackage{caption}
\usepackage{mathrsfs}
\usepackage{amsmath}
\usepackage{amsthm}
\usepackage{amsfonts,amsthm,amssymb,mathrsfs,bbding}
\usepackage{txfonts}
\usepackage{graphics,multicol}
\usepackage{graphicx}
\usepackage{color}
\usepackage{caption}
\usepackage{enumitem}
\usepackage{amsmath}
\usepackage{cite}
\usepackage{latexsym,bm}
\usepackage{indentfirst}
\usepackage{color}
\usepackage[backref,colorlinks=true,anchorcolor=blue,filecolor=blue,linkcolor=blue,urlcolor=blue,citecolor=blue]{hyperref}
\usepackage{extarrows}
\usepackage{cite}
\usepackage{latexsym,bm}
\usepackage{mathtools}
\evensidemargin=\oddsidemargin\topmargin=-1.5cm

\newtheorem{thm}{Theorem}[section]

\newtheorem{lem}{Lemma}[section]
\newtheorem{cor}{Corollary}[section]

\newtheorem{remark}{Remark}[section]

\newtheorem{claim}{Claim}
\newtheorem{definition}{Definition}[section]
\theoremstyle{definition}

\addtocounter{section}{0}

\def \A{\mathcal{A}}
\def \ex{\text{ex}}

\begin{document}
\title{Generalized spectral Tur\'an problems for disjoint cliques}
\author{Yi Xu, Yi-Zheng Fan\thanks{Corresponding author. Supported by National Natural Science Foundation of China (No. 12331012).
Email: yxu123@stu.ahu.edu.cn, fanyz@ahu.edu.cn}\\
\small \it Center for Pure Mathematics, School of Mathematical Sciences, \\ \small \it Anhui University, Hefei 230601, P. R. China}
\date{}
\maketitle

\begin{abstract}
The generalized Tur\'an number $\ex(n, H, F)$ denotes the maximum number of copies of $H$ in an $n$-vertex $F$-free graph.
Let $kK_{r+1}$ be the disjoint union of $k$ copies of the complete graph $K_{r+1}$.
Recently, Gerbner determined $\ex(n, K_{t},kK_{r+1})$ for all sufficiently large $n$.
In this paper, we study a spectral analogue of this problem via the $t$-clique tensor of a graph.
We prove that if an $n$-vertex $kK_{r+1}$-free graph $G$ maximizes the $t$-clique spectral radius, then for sufficiently large $n$, $G$ is the join of a complete graph $K_{k-1}$ and the $r$-partite Tur\'an graph $T_{r}(n-k+1)$.
This establishes a spectral counterpart of Gerbner's Theorem.
Moreover, in the case $t=2$, our result recovers a theorem of Ni, Wang, and Kang on the maximum spectral radius of $kK_{r+1}$-free graphs.

\noindent
\textbf{Keywords:} Tur\'an problem; spectral Tur\'an problem; $t$-clique tensor; spectral radius; disjoint cliques

\noindent
\textbf{MSC 2000:} 05C35; 05C50; 15A69
\end{abstract}

\section{Introduction}
A graph $G$ is said to be \emph{$H$-free} if $G$ contains no copy of $H$ as a subgraph.
A central problem of extremal graph theory asks: for a given graph $H$, what is the maximum number of edges in an $n$-vertex $H$-free graph?
This problem, together with its various extensions, is known as the \emph{Tur\'an problem}.
The {\it Tur\'an number} of $H$, denoted by $\ex(n, H)$, is defined as this maximum.
Let $T_{r}(n)$, the \emph{Tur\'an graph}, be the complete $r$-partite graph on $n$ vertices whose part sizes are as equal as possible.
A complete graph on $t$ vertices is called a $t$-clique and is denoted by $K_{t}$.
A classical result, Tur\'an's theorem \cite {Turan}, determines $\ex(n,K_{r+1})$ and shows that the unique extremal graph is $T_{r}(n)$.

Let $kK_{r+1}$ denote the disjoint union of $k$ copies of $K_{r+1}$.
For two disjoint graphs $G_{1}$ and $G_{2}$, the \emph{join} $G_{1}\vee G_{2}$ is the graph obtained from $G_1 \cup G_2$ by adding all possible edges between the vertices of $G_1$ and the vertices of $G_2$.
Simonovits \cite{Simonovits} and Moon \cite {Moon} independently proved that, for sufficiently large $n$, $K_{k-1}\vee T_{r}(n-k+1)$ is the unique extremal graph for forbidding $kK_{r+1}$; see the following theorem, where $e(G)$ denotes the number of edges of $G$.

\begin{thm}[\cite{Simonovits,Moon}]\label{edge-kK}
Let $G$ be an $n$-vertex graph with no copy of $kK_{r+1}$.
Then, for sufficiently large $n$,
$$e(G)\leq e(K_{k-1}\vee T_{r}(n-k+1)).$$
\end{thm}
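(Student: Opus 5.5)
The plan is induction on $k$. The base case $k=1$ is Tur\'an's theorem, since $K_0\vee T_r(n)=T_r(n)$. For $k\ge 2$, assume the bound for $(k-1)K_{r+1}$ holds for all sufficiently large orders. Let $G$ be $kK_{r+1}$-free on $n$ vertices with $e(G)\ge e(K_{k-1}\vee T_r(n-k+1))$; we show equality must hold.

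First we locate a useful copy of $K_{r+1}$. Note that
$$e(K_{k-1}\vee T_r(n-k+1))=t_r(n)+(k-1)\tfrac{n}{r}+O(1),$$
where $t_r(n)=e(T_r(n))$, so $e(G)>t_r(n)$. Hence $G$ contains a $K_{r+1}$, and in fact, by supersaturation, it contains many. Fix a copy $Q$ of $K_{r+1}$. Then $G-V(Q)$ is $(k-1)K_{r+1}$-free, and by induction
$$e(G-V(Q))\le e\bigl(K_{k-2}\vee T_r(n-r-k)\bigr).$$
This gives
$$e(G)\le e(G-V(Q))+\textstyle\binom{r+1}{2}+\sum_{v\in Q}d(v).$$
Comparing with the lower bound, the degree sum of $Q$ satisfies
$$\sum_{v\in Q} d(v)\ \ge\ e(K_{k-1}\vee T_r(n-k+1))-e(K_{k-2}\vee T_r(n-r-k))-O(1).$$
The right-hand side is roughly $rn+\frac{n}{r}-O(1)$: removing $r+1$ vertices from a Tur\'an-type graph costs about $r\cdot\frac{(r-1)n}{r}$, plus one extra universal vertex worth $n$. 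So every $K_{r+1}$ in $G$ has a vertex of degree at least about $\frac{n}{r}+\frac{(r-1)n}{r}$ beyond the Tur\'an degree. The cleaner route is to conclude that $Q$ contains a vertex of degree at least $(1-\frac1r+c)n$ for some $c>0$, since otherwise the degree sum is at most $(r+1)(1-\frac1r)n+o(n)<rn+\frac nr$.

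Next we find $k-1$ vertices of very high degree. Let $W$ be the set of vertices of degree at least $(1-\frac1r+c)n$. Every $K_{r+1}$ meets $W$. If $|W|\ge k$, we show that the $k$ high-degree vertices each extend to disjoint $K_{r+1}$'s, which would be a contradiction. To do this, apply the Erd\H{o}s--Stone/stability theorem to $G-W$: since $e(G)\ge t_r(n)$ and $|W|=O(1)$ (because the degree sum is bounded), $G$ is $o(n^2)$-close to $T_r(n)$. A vertex adjacent to a $c$-fraction of every part of this near-Tur\'an structure lies in many $K_{r+1}$'s, so we can greedily choose disjoint cliques, one through each vertex of $W$. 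Hence $|W|\le k-1$. On the other hand, $G-W$ must be $K_{r+1}$-free, so $e(G-W)\le t_r(n-|W|)$. Then
$$e(G)\le t_r(n-|W|)+|W|(n-1)\ \le\ e(K_{k-1}\vee T_r(n-k+1)),$$
with equality only if $|W|=k-1$, $W$ is a clique joined to all other vertices, and $G-W=T_r(n-k+1)$.

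The main obstacle is the greedy disjoint-clique embedding through $W$. Bounding $|W|$ requires care, as does the cleanup of vertices with atypical degree in the stability partition. I would first delete low-degree vertices, using the standard argument that removing a vertex of degree below $(1-\frac1r-\epsilon)n$ increases the edge surplus over the extremal number. This reduces to the case of minimum degree at least $(1-\frac1r-\epsilon)n$, where the embedding becomes routine.
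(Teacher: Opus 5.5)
The paper does not prove this statement. It quotes it from Simonovits and Moon, and the only argument it offers is the remark after Theorem~\ref{NiK} that the edge bound follows from the spectral bound via $\rho(G)\ge 2e(G)/n$. That deduction does not give the exact bound. Because $K_{k-1}\vee T_r(n-k+1)$ is not regular, $\frac n2\rho$ exceeds its edge count by $\Theta(n)$ (about $\frac{(k-1)n}{2r(r-1)}$), so it only yields $e(G)\le e(K_{k-1}\vee T_r(n-k+1))+O(n)$. Section 3, despite its heading, proves Theorem~\ref{Main}: it uses stability, the exceptional sets $W$ and $L$, a rewiring argument to show $L=\emptyset$, and the Chv\'atal--Hanson bound to force $|W|=k-1$.

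Your argument is a genuinely different and essentially correct combinatorial proof. Its key step deletes one copy $Q$ of $K_{r+1}$ and applies induction to $G-V(Q)$. This shows every $K_{r+1}$ contains a vertex of degree at least $\frac{r}{r+1}n-O(1)=(1-\frac1r+\frac{1}{r(r+1)})n-O(1)$. Stability is then needed only to show that at most $k-1$ vertices have such degree, and Tur\'an's theorem on $G-W$ finishes, with uniqueness for free. This relies on edge counts being additive under vertex deletion, which has no analogue for $\rho_t$. That is why the paper, in the spectral setting, must clean up low-degree vertices and control in-part edges through eigenvector estimates instead.

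Several details need repair.

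\textbf{The degree-sum step.} The difference $e(K_{k-1}\vee T_r(n-k+1))-e(K_{k-2}\vee T_r(n-r-k+1))$ equals $rn-O(1)$, not $rn+\frac nr$; note the index is $n-r-k+1$, not $n-r-k$. Your sentence about a vertex of degree $\frac nr+\frac{(r-1)n}{r}$ beyond the Tur\'an degree describes a degree exceeding $n$, which is impossible. Your cleaner route still works, but only with $c<\frac{1}{r(r+1)}$.

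\textbf{Bounding $|W|$.} The claim that $|W|=O(1)$ because the degree sum is bounded is unjustified. Stability should be applied to $G$ itself, which is $kK_{r+1}$-free with more than $t_r(n)$ edges, not to $G-W$. Neither an a priori bound on $|W|$ nor the minimum-degree cleanup you propose is needed. Each $w\in W$ has at least $cn-O(1)$ neighbours in every part, including its own, because the other $r-1$ parts have total size $(1-\frac1r)n+O(1)$. Choose those neighbours outside the $O(\sqrt{\varepsilon}n)$ vertices with many missing cross edges, taking $\varepsilon\ll c^2$. This lets you greedily build disjoint copies of $K_{r+1}$ through any $k$ vertices of $W$.

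\textbf{The final display.} As written it is false for $k\ge 3$. Bounding the edges at $W$ by $|W|(n-1)$ counts the edges inside $W$ twice, so $t_r(n-k+1)+(k-1)(n-1)$ exceeds the target by $\binom{k-1}{2}$. Use $e(G)\le t_r(n-|W|)+\binom{|W|}{2}+|W|(n-|W|)=e(K_{|W|}\vee T_r(n-|W|))$. Then add the fact that $s\mapsto e(K_s\vee T_r(n-s))$ increases by $\lceil (n-s)/r\rceil-1>0$ at each step.
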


A natural generalization of Tur\'an type problems is to count copies of a fixed subgraph rather than just an edge.
Given a graph $H$ and a family $\mathcal{F}$ of graphs, let $\ex(n, H, \mathcal{F})$ denote the maximum number of copies of $H$ in an $n$-vertex graph that contains no member of $\mathcal{F}$ as a subgraph.
This quantity is called the \emph{generalized Tur\'an number}.
When $\mathcal{F} = \{F\}$, we simply write $\ex(n, H, F)$.
In particular, when $H=K_2$, then $\ex(n, H, F)$ reduces to the classical Tur\'an number $\ex(n, F)$.

The first result of this type is due to Zykov \cite{Zykov}, who in 1949 determined $\ex(n, K_{r}, K_{k})$.
For a graph $G$, let $C_{t}(G)$ be the set of all $t$-cliques in $G$, and let $c_{t}(G) = |C_{t}(G)|$.
Recently, Gerbner \cite{Gernber,Gernber2} determined $\ex(n,K_{t},kK_{r+1})$ for sufficiently large $n$.
For further related results, see \cite{Gernber3}.

\begin{thm}[\cite{Gernber,Gernber2}]\label{Gernb}
Let $r\geq t\geq2 $ and $k\geq1$. For sufficiently large $n$,
$$\ex(n,K_{t},kK_{r+1})=c_{t}(K_{k-1}\vee T_{r}(n-k+1) ).$$
\end{thm}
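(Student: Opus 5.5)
The lower bound is immediate. $K_{k-1}\vee T_r(n-k+1)$ is $kK_{r+1}$-free: each copy of $K_{r+1}$ must use at least one vertex of the $K_{k-1}$, because $T_r(n-k+1)$ has clique number $r$, and $k$ disjoint copies would need $k$ distinct such vertices. So the work is the upper bound $c_t(G)\le c_t(K_{k-1}\vee T_r(n-k+1))$ for every $n$-vertex $kK_{r+1}$-free $G$. I would argue by induction on $k$, using stability and the edge result Theorem~\ref{edge-kK} as a guide. The base case $k=1$ is Zykov's theorem: the number of $K_t$'s in a $K_{r+1}$-free graph is maximized by $T_r(n)$.

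For $k\ge 2$, take an extremal $G$. It contains a copy $Q$ of $K_{r+1}$, since otherwise Zykov already gives the bound. First, $G-V(Q)$ is $(k-1)K_{r+1}$-free, and each vertex of $Q$ lies in at most $O(n^{t-1})$ cliques, so
\[
c_t(G)\ge c_t(K_{k-1}\vee T_r(n-k+1)) = \tfrac{1}{r^t}\tbinom{r}{t}n^t+\Theta(n^{t-1}).
\]
Second, by the removal lemma combined with Erd\H{o}s--Simonovits type stability for cliques (a $K_{r+1}$-free graph with nearly $\binom{r}{t}(n/r)^t$ copies of $K_t$ is $o(n^2)$-close to $T_r(n)$), $G$ becomes $K_{r+1}$-free after deleting $o(n^2)$ edges. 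Together these show that $G$ is $o(n^2)$-close to $T_r(n)$.

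Next I would fix a max-cut $r$-partition $V_1,\dots,V_r$ and let $B$ be the set of vertices that have $\Omega(n)$ neighbours in every part; these are exactly the vertices that can extend to many $K_{r+1}$'s. If $|B|\ge k$, then greedily choosing disjoint $K_{r+1}$'s through $k$ vertices of $B$ is possible, since all parts have linear-size common neighbourhoods, and this contradicts $kK_{r+1}$-freeness. Hence $|B|\le k-1$. Each vertex not in $B$ with low degree into some part can be deleted and re-added as a clone of a typical vertex; by extremality and a degree count, $c_t$ does not drop, which lets me assume $G-B$ is $r$-partite with all vertices of large degree. I then compare $c_t(G)$ with $c_t$ of the graph obtained by making $B$ complete to everything and $G-B$ a balanced complete $r$-partite graph. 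If $|B|<k-1$, the $\Theta(n^{t-1})$ term is strictly smaller than that of $K_{k-1}\vee T_r(n-k+1)$, since each vertex of $B$ contributes about $\binom{r}{t-1}(n/r)^{t-1}$ cliques. Otherwise the structure forces equality.

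The main obstacle is the cleaning step: proving that $G-B$ really is $r$-partite. An edge inside some $V_i$ among vertices outside $B$, together with the $k-1$ vertices of $B$, could produce $kK_{r+1}$. To rule out such edges I would use the greedy embedding: one extra $K_{r+1}$ built from a bad edge plus typical vertices, disjoint from $k-1$ cliques each anchored at a vertex of $B$. This contradiction requires every non-$B$ vertex to have linear degree into all other parts, which is exactly what the cloning/symmetrization step secures.
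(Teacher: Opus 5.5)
The paper does not prove this statement. It quotes it from Gerbner and uses it only as a black box, in the proof of Lemma~\ref{2nd-ev}. Your outline is essentially the counting version of the paper's proof of Theorem~\ref{Main}: stability, a max-cut partition, your $B$ in the role of $W$, removal of low-degree vertices, then showing $G-B$ is $r$-partite.

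As written, however, it has a genuine gap in the case $|B|<k-1$. Your cleaning argument builds one $K_{r+1}$ through an edge inside a part plus $k-1$ cliques anchored at vertices of $B$, so it only works when $|B|=k-1$. When $|B|<k-1$, the same embedding shows only that the edges inside the sets $V_i\setminus B$ have matching number at most $k-1-|B|$. It says nothing about how many such edges there are.

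By max-cut, a vertex outside $B$ merely has fewer than $\gamma n$ neighbours in its own part, where $\gamma n$ is your threshold for $B$. So a star with $\Theta(\gamma n)$ such edges is not excluded. Each such edge between typical vertices lies in about $\binom{r-1}{t-2}(n/r)^{t-2}$ copies of $K_t$. These edges can therefore contribute $\Theta(n^{t-1})$ cliques, which is exactly the order of the gain $(k-1-|B|)\binom{r-1}{t-2}(n/r)^{t-1}$ you want to exploit. Note that the per-vertex gain is $\bigl(\binom{r}{t-1}-\binom{r-1}{t-1}\bigr)(n/r)^{t-1}$, not $\binom{r}{t-1}(n/r)^{t-1}$. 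Your sentence about the $\Theta(n^{t-1})$ term thus tacitly assumes $G-B$ is $r$-partite, which you have only shown when $|B|=k-1$.

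You need an extra step here, and there are two ways to supply it.
\begin{itemize}
\item The paper's route. As in Lemma~\ref{v-indeg}, show by extremality that every vertex outside $B\cup L$ has at most $(k-1)(r+1)$ neighbours in its own part outside $B\cup L$. To do this, join the vertex to all of that set; any $kK_{r+1}$ so created can be rerouted through an old in-part neighbour, and $c_t$ strictly increases. Then combine the matching bound with Lemma~\ref{chv-md} to get $O(1)$ in-part edges, costing only $O(n^{t-2})$ cliques. Finally rule out $|B|<k-1$ by comparison, or by the construction with $E^*$ in Lemma~\ref{W}.
\item A direct count. The in-part edges are covered by at most $2(k-1-|B|)$ vertices, each of in-part degree below $\gamma n$. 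So they contribute at most about $2\gamma r(k-1-|B|)\binom{r-1}{t-2}(n/r)^{t-1}$ copies of $K_t$. This is below the gain once $\gamma<1/(2r)$, provided the other error terms are $o(n^{t-1})$.
\end{itemize}

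A second, smaller point is the cloning step. Re-adding a low-degree vertex as a twin of a typical vertex $u$ need not keep the graph $kK_{r+1}$-free, because the twin lies in a $K_{r+1}$ whenever $u$ does. For instance, take $|B|=k-2$ and $u$ the centre of an in-part star as above. The twin of $u$ raises the in-part matching number to $2$ and can produce $k$ disjoint copies of $K_{r+1}$. The paper avoids this in Lemma~\ref{LO} by joining the vertex only to $\bigcup_{i\ge 2}I_i$, with each $I_i$ independent, so the new vertex lies in no $K_{r+1}$ at all. You should do the same.

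A minor point on your first step: the facts about $G-V(Q)$ give an upper bound on $c_t(G)$, not the lower bound you write. The lower bound comes from extremality, and the announced induction on $k$ is never actually needed.
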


In recent years, increasing attention has been devoted to the spectral Tur\'an problem.
The \emph{spectral radius} of a graph $G$, denoted by $\rho(G)$, is the largest eigenvalue of its adjacency matrix $A(G)$.
Nikiforov \cite{Nikiforov2} proved that if $G$ is a $K_{r+1}$-free graph on $n$ vertices, then $\rho(G)\leq \rho(T_{r}(n))$,
with equality if and only if $G=T_{r}(n)$.
Subsequently, Ni, Wang, and Kang \cite{Ni} determined the spectral extreme for the forbidding $kK_{r+1}$, as stated below.
Further related results can be found in \cite{Chen,Fang,Li,Liu.L,Liu.R,Nikiforov,Re,Cio2,Zhai}

\begin{thm}[\cite{Ni}]\label{NiK}
For $k\geq 2 $,  $r\geq 2 $, and $n$ be sufficiently large.
If $G$ is a $kK_{r+1}$-free graph on $n$ vertices,
then $$\rho(G) \leq \rho(K_{k-1}\vee T_{r}(n-k+1)),$$
with equality if and only if $G \cong K_{k-1}\vee T_{r}(n-k+1)$.
\end{thm}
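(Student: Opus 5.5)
This is the theorem of Ni, Wang and Kang, so I would follow the standard stability-based spectral extremal scheme. Let $G$ be a $kK_{r+1}$-free graph on $n$ vertices with maximum spectral radius, and put $\rho=\rho(G)$. Let $\mathbf{x}$ be the positive unit Perron vector, normalised so that $\max_v x_v = x_{u^*}=1$. We may take $G$ connected, since otherwise adding an edge between two components creates no new $K_{r+1}$ and strictly increases $\rho$.

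\emph{Step 1: lower bound and stability.} Since $K_{k-1}\vee T_r(n-k+1)$ is $kK_{r+1}$-free, we have
\[
\rho\ge \rho(K_{k-1}\vee T_r(n-k+1))\ge \Bigl(1-\tfrac1r\Bigr)n .
\]
By the Nikiforov spectral stability theorem (a spectral Erd\H{o}s--Stone--Simonovits form), $G$ then contains $K_{r+1}$, and in fact many copies of it. Combined with Theorem~\ref{edge-kK} in its Erd\H{o}s--Simonovits stability form, this gives $e(G)\ge (1-\frac1r)\frac{n^2}{2}-o(n^2)$. Hence $G$ differs from some $T_r(n)$ in $o(n^2)$ edges. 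Fix a partition $V_1,\dots,V_r$ that maximises $\sum_i e(V_i,\overline{V_i})$; then $\sum_i e(V_i)=o(n^2)$ and each $|V_i|=n/r+o(n)$.

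\emph{Step 2: structure.} Let $W$ be the set of vertices having $\ge \varepsilon n$ neighbours inside their own part, and let $L$ be the set of vertices of degree $\le (1-\frac1r-\delta)n$. Edge counting gives $|W|,|L|=o(n)$. Using the eigenvalue equation $\rho x_v=\sum_{u\sim v}x_u$, vertices outside $L$ have $x_v$ close to $1$, and vertices in $L$ have $x_v\le 1-c$.

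Next I would show $|W|\le k-1$. If $|W|\ge k$, each $w\in W$ has linearly many neighbours in every part, because it has many neighbours in its own part and a maximal cut forces many in the others too. Greedily build $k$ disjoint copies of $K_{r+1}$, each consisting of some $w\in W$ together with an $r$-clique found among the good vertices of $V_1,\dots,V_r$ inside $N(w)$. This is a contradiction.

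A similar greedy embedding shows that $G-W$ contains no edge inside any part among the good vertices: such an edge, together with the vertices of $W$ and disjoint transversal cliques, would produce $kK_{r+1}$. More precisely, the $K_{r+1}$'s containing edges inside parts must pairwise intersect.

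Then I would show $L=\emptyset$. If $v\in L$, delete all edges at $v$ and join $v$ to all of $W$ and to all good vertices of the parts other than one. The result is still $kK_{r+1}$-free, and the Rayleigh quotient strictly increases because the $x$-weights are near $1$. This contradicts maximality.

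\emph{Step 3: exact structure.} Now $G-W$ is $r$-partite. The spectral extremality and the same Rayleigh-quotient switching show that $W$ induces a clique, is completely joined to everything, and that $|W|=k-1$; if $|W|<k-1$, a vertex can be moved into $W$. So $G=K_{k-1}\vee H$ with $H$ complete $r$-partite, since adding edges between parts creates no $K_{r+1}$ beyond those using $W$, and $|W|=k-1$ allows at most $k-1$ disjoint such cliques. Finally, a standard balancing argument shows that among complete $r$-partite $H$ on $n-k+1$ vertices, $\rho(K_{k-1}\vee H)$ is uniquely maximised at $H=T_r(n-k+1)$. I would do this by comparing the eigen-equations of two parts whose sizes differ by at least $2$ and moving one vertex.

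\emph{Main obstacle.} The delicate step is Step~2, specifically the precise bound $|W|\le k-1$ together with the cleanness of $G-W$. It needs careful quantitative control: every vertex of $W$ must see $\Omega(n)$ good vertices in every part, so that the disjoint cliques can be embedded greedily while avoiding the $o(n)$ bad vertices. I would handle this by choosing $\varepsilon\gg$ the stability error and iterating the greedy choice of common neighbourhoods.
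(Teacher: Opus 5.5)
Your outline has the same architecture as the paper's argument specialised to $t=2$: stability, the sets $W$ and $L$, then $L=\emptyset$, then $|W|=k-1$ with $G-W$ $r$-partite, then $W$ dominating, then balancing. The middle steps, however, rest on claims that are false or circular.

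First, the eigenvector control is wrong. With $\max_v x_v=1$, it is not true that vertices outside $L$ have $x_v$ close to $1$. In $K_{k-1}\vee T_r(n-k+1)$ itself the maximum is attained on $W$. From $\rho x_w=(k-2)x_w+\sum_{v\notin W}x_v$, every vertex outside $W$ has entry about $1-\frac1r$. So your $L=\emptyset$ switching (``the Rayleigh quotient increases because the $x$-weights are near $1$'') has no basis. The comparison must be made at scale $\varepsilon_1 n$ against $x_{v_0}=\max_{v\notin W}x_v$. The paper first proves $v_0\notin L$ and that $x_{v_0}$ is bounded below (Lemma~\ref{2nd-ev}). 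It then reads off from the eigen-equation at $v_0\in V_1$, using the bounded in-part degree of Lemma~\ref{v-indeg}, that $\sum_{u\in\bigcup_{i\ge2}I_i}x_u\ge\rho x_{v_0}-O(\varepsilon_2 n)\ge(1-\frac1r)n\,x_{v_0}-O(\varepsilon_2 n)$. On the other side, $\rho x_v\le(1-\frac1r-\varepsilon_1)n\,x_{v_0}+|W|$ for $v\in L$. The paper also rewires $v$ only to the independent sets $I_2,\dots,I_r$, which makes $kK_{r+1}$-freeness trivial. Your rewiring also joins $v$ to $W$, and then freeness needs an argument you do not give. In Step 1, the right tool is simply Nikiforov's spectral stability theorem, the case $t=2$ of Lemma~\ref{Stability}; ``many copies of $K_{r+1}$'' does not yield an edge lower bound.

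Second, the order of Steps 2--3 is circular. An in-part edge among good vertices, together with $|W|$ disjoint cliques through $W$, gives $|W|+1$ disjoint copies of $K_{r+1}$. That is a contradiction only when $|W|=k-1$. In general you get only $\mu\bigl(\bigcup_i G[V_i\setminus W]\bigr)\le k-1-|W|$; your ``pairwise intersect'' is the case $|W|=k-2$. Yet Step 3 uses the $r$-partiteness of $G-W$ to prove $|W|=k-1$, and ``a vertex can be moved into $W$'' is exactly where the work lies. Three things are missing:
\begin{enumerate}
\item[(i)] An $O(1)$ bound on the number of in-part edges. This comes from the matching bound together with the in-part degree bound $(k-1)(r+1)$ of Lemma~\ref{v-indeg}, which is itself a separate switching argument.
\item[(ii)] The switch that deletes these edges and joins a set $S$ of $k-1-|W|$ vertices of $V_1\setminus W$ to the rest of $V_1\setminus W$. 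After it, every $K_{r+1}$ meets $W\cup S$.
\item[(iii)] A uniform lower bound $x_v\ge c_0>0$ on all Perron entries (Lemma~\ref{ev}). Without it you cannot conclude that gaining $\Omega(n)$ edges outweighs losing $O(1)$.
\end{enumerate}
Relatedly, before $L=\emptyset$ one only has $|W\setminus L|\le k-1$, not $|W|\le k-1$. A vertex of $W\cap L$ is guaranteed only $\varepsilon n$ neighbours per part, by the max-cut property. A vertex outside $W\cup L$ may miss up to about $(\varepsilon+\delta)n$ vertices of the other parts. So your greedy embedding needs either $w\notin L$ or a finer notion of ``good'' vertex.
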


Using the fact that $\rho(G) \geq \frac{2e(G)}{n}$ and $e(T_{r}(n)) = \big\lfloor \frac{n}{2} \rho(T_{n,r}) \big\rfloor$,
one can derive the edge bound in Theorem \ref{edge-kK} from the spectral bound in Theorem \ref{NiK}.
This connection highlights the strength of spectral methods in extremal graph theory and motivates the study of spectral analogues of generalized Tur\'an problems.

To this end, we adopt a framework based on hypergraph tensor representations, introduced by Cooper and Dutle \cite{Cooper} and further developed by Liu and Bu \cite{Liu2}.
We first recall some basic facts about tensors.
An order $k$ and dimension $n$ tensor $\A$ over $\mathbb{C}$ is a multidimensional array $\A=(a_{i_1 i_2 \ldots, i_k})$ with $n^k$ complex entries, where $i_j \in [n]:=\{1,2,\ldots,n\}$ for each $j \in [k]$.
The notions of eigenvalues and eigenvectors for a tensor were introduced independently by Qi \cite{Qi1} and Lim \cite{Lim}, as follows.

\begin{definition}[\cite{Qi1,Lim}]
Let $\A = (a_{i_{1} i_{2} \cdots i_{k}})$ be an order $k \geq 2$ and dimension $n$ tensor.
If there exist a number $\lambda \in \mathbb{C}$ and a vector $x = (x_{1}, x_{2}, \ldots, x_{n}) \in \mathbb{C}^n$ such that
$$\A x^{k-1} = \lambda x^{[k-1]},$$
then $\lambda$ is called an eigenvalue of $\A$, and $x$ is an eigenvector of $\A$ corresponding to $\lambda$.
Here $\A x^{k-1} \in \mathbb{C}^n$ is defined by
$$(A x^{k-1})_j= \sum_{i_{2}, \ldots, i_{k} \in [n]} a_{j i_{2} \cdots i_{k}} x_{i_{2}} \cdots x_{i_{k}}, ~ j \in [n],$$
and $x^{[k-1]}:=(x_1^{k-1},\ldots, x_n^{k-1})$.
\end{definition}

The \emph{spectral radius} of $\A$, denoted by $\rho(\A)$, is defined as the maximum modulus of the eigenvalues of $\A$.
A tensor is called \emph{symmetric} if its entries are invariant under any permutation of the indices.
Let $\mathbb{R}_{+}^n$  be the set of nonnegative real vectors in $\mathbb{R}^n$.
Qi \cite{Qi2} established the following result for nonnegative symmetric tensors, where the $\ell_k$-norm of a vector $x=(x_{1}, x_{2}, \ldots, x_{n})$ is defined by $\|x\|_k=\left(\sum_{i=1}^n |x_i|^k \right)^{1/k}$.

\begin{lem}[\cite{Qi2}]\label{QiSpec}
Let $\A$ be an order $k \geq 2$ and dimension $n$ nonnegative symmetric tensor.
Then
$$\rho(\A) = \max \bigg\{ x^\top \A x^{k-1}:=\sum_{i_{1}, \cdots, i_{k} \in [n]} a_{i_{1} i_{2} \cdots i_{k}} x_{i_{1}} x_{i_{2}} \cdots x_{i_{k}}: x\in \mathbb{R}_{+}^n, \|x\|_k=1\bigg\}.$$
\end{lem}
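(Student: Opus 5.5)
The plan is to prove the two inequalities separately. Put
$$S=\{x\in\mathbb{R}_+^n:\|x\|_k=1\},\qquad F(x)=x^\top\A x^{k-1},\qquad \lambda^*=\max_{x\in S}F(x).$$
The maximum $\lambda^*$ exists because $S$ is compact and $F$ is a polynomial, hence continuous.

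\textbf{Step 1: $\rho(\A)\le\lambda^*$.} Let $(\lambda,y)$ be an eigenpair with $|\lambda|=\rho(\A)$, scaled so that $\|y\|_k=1$. Taking moduli in the $j$-th coordinate of $\A y^{k-1}=\lambda y^{[k-1]}$ and using $a_{j i_2\cdots i_k}\ge 0$ gives
$$\rho(\A)\,|y_j|^{k-1}\le \sum_{i_2,\ldots,i_k} a_{j i_2\cdots i_k}|y_{i_2}|\cdots|y_{i_k}|=(\A|y|^{k-1})_j .$$
Multiply by $|y_j|$ and sum over $j$. Since $\sum_j |y_j|^k=1$, this yields $\rho(\A)\le |y|^\top\A|y|^{k-1}=F(|y|)$. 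As $|y|\in S$, we get $\rho(\A)\le\lambda^*$.

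\textbf{Step 2: $\lambda^*\le\rho(\A)$.} It suffices to show that a maximizer $x^*\in S$ is an eigenvector of $\A$ with eigenvalue $\lambda^*$. Symmetry of $\A$ gives $\nabla F(x)=k\,\A x^{k-1}$. To avoid boundary issues I would reparametrize $x_i=z_i^2$ with $z$ ranging over the smooth sphere $\sum_i z_i^{2k}=1$. On that sphere $G(z)=F(z^{[2]})$ attains its maximum at $z^*=\sqrt{x^*}$, and the constraint gradient is nonzero, so the Lagrange condition holds:
$$2k\,z_i^*(\A x^{*\,k-1})_i=\mu\cdot 2k\,(z_i^*)^{2k-1}\quad\text{for all } i .$$

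\begin{itemize}
\item For $x_i^*>0$, divide by $z_i^*$ to get $(\A x^{*\,k-1})_i=\mu\,(x_i^*)^{k-1}$.
\item For $x_i^*=0$ the condition is vacuous. Instead, a one-sided perturbation argument applies: if $(\A x^{*\,k-1})_i>0$, then moving a small amount of mass into coordinate $i$ increases $F$ at first order, while renormalizing costs only $O(\varepsilon^k)$. This contradicts maximality, so $(\A x^{*\,k-1})_i=0=\mu\,(x_i^*)^{k-1}$.
\end{itemize}

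Thus $\A x^{*\,k-1}=\mu\,x^{*[k-1]}$. Pairing this identity with $x^*$ gives $\mu=F(x^*)=\lambda^*$. Hence $\lambda^*$ is an eigenvalue, and $\lambda^*\le\rho(\A)$.

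The main obstacle is the treatment of the zero coordinates of $x^*$ in Step 2, since $S$ is not a smooth manifold at its boundary. I expect the first-order perturbation argument to close this gap, but it needs care. For $k\ge 2$, take $x_\varepsilon=(x^*+\varepsilon e_i)/\|x^*+\varepsilon e_i\|_k$. Then $F(x^*+\varepsilon e_i)=\lambda^*+k\varepsilon(\A x^{*\,k-1})_i+O(\varepsilon^2)$, while $\|x^*+\varepsilon e_i\|_k^k=1+\varepsilon^k$. When $k=2$ the two correction terms compete at order $\varepsilon^2$, so the linear gain still wins for small $\varepsilon$, though this case should be checked explicitly.
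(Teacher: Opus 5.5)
Your proof is correct, but there is no proof in the paper to compare it with: the lemma is quoted from \cite{Qi2} and used only to write $\rho_t(G)$ as a maximum of $t\sum_{e\in C_t(G)}x^e$. The usual proof of this fact runs as follows. For $\rho(\mathcal{A})\le\lambda^*$ it uses the tensor Perron--Frobenius lemma quoted in Section 2: $\rho(\mathcal{A})$ has a nonnegative eigenvector $y$, and $F(y/\|y\|_k)=\rho(\mathcal{A})$. For $\lambda^*\le\rho(\mathcal{A})$ it uses the KKT conditions for maximizing $F$ on $\{x\ge0,\ \sum_i x_i^k=1\}$. There, $(\mathcal{A}x^{k-1})_i\ge0$ forces the multipliers of the active constraints $x_i\ge0$ to vanish, so any maximizer is an eigenvector.

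Your Step 2 is this KKT argument done by hand: the substitution $x=z^{[2]}$ handles the support, and a one-sided perturbation handles the zero coordinates. It is sound. Your Step 1 is the genuinely different ingredient. Taking moduli in $\mathcal{A}y^{k-1}=\lambda y^{[k-1]}$ bounds $|\lambda|$ by $F(|y|)$ for every complex eigenpair, without Perron--Frobenius and without symmetry.

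What your route buys is self-containment. It even recovers the weak Perron--Frobenius statement for symmetric nonnegative tensors: combining your two steps, every maximizer $x^*$ is a nonnegative eigenvector for the eigenvalue $\rho(\mathcal{A})$. Symmetry enters only through $\nabla F=k\,\mathcal{A}x^{k-1}$ in Step 2, which is exactly where it must. For $A=\bigl(\begin{smallmatrix}0&1\\0&0\end{smallmatrix}\bigr)$ one has $\rho(A)=0$ but $\max_S x^\top Ax=1/2$.

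Your hedge about $k=2$ is unnecessary. By homogeneity, $F(x_\varepsilon)=F(x^*+\varepsilon e_i)/(1+\varepsilon^k)=\lambda^*+k\varepsilon(\mathcal{A}x^{*\,k-1})_i+O(\varepsilon^2)$ for every $k\ge2$, since $\varepsilon^k=O(\varepsilon^2)$. So the linear gain dominates uniformly and no case needs separate checking.
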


Recently, Liu and Bu \cite{Liu2} introduced the $t$-clique tensor of a graph.

\begin{definition}[\cite{Liu2}]\label{t-cliq}
Let $G$ be a graph on $n$ vertices. The $t$-clique tensor $\A_{t}(G)=(a_{i_{1}i_{2}\cdots i_{t}})$ is defined by
$$
a_{i_{1}i_{2}\cdots i_{t}}=
\begin{cases}
\frac{1}{(t-1)!}, & \text{if } \{i_{1}, \ldots,i_{t}\} \in C_{t}(G), \\
0, & \text{otherwise}.
\end{cases}
$$
\end{definition}

Observe that the collection of all $t$-cliques in $G$ forms a $t$-uniform hypergraph $H_t(G)$ on the same vertex set as $G$, called the \emph{$t$-clique hypergraph} of $G$, where $\{i_1,\ldots,i_t\}$ is a hyperedge if and only if it induces a $t$-clique in $G$.
Thus, $\A_t(G)$ coincides with the adjacency tensor of $H_t(G)$ (see \cite{Cooper}).
The \emph{$t$-clique spectral radius} of $G$ is defined as $\rho_{t}(G):=\rho(\A_t(G))$.
In particular, when $t=2$, $\A_{t}(G)$ reduces to the adjacency matrix of $G$, and $\rho_{t}(G)$ is the usual spectral radius of $G$.

By a slight abuse of notation, we use the vertex set of a $t$-clique to represent the clique itself.
By Definition \ref{t-cliq} and Lemma \ref{QiSpec}, we obtain
$$\rho_{t}(G)=\max \bigg\{t\sum_{\{i_{1}, \ldots,i_{t}\}\in C_{t}(G)}x_{i_{1}}\cdots x_{i_{t}}:
x \in \mathbb{R}_{+}^n, \|x\|_t = 1 \bigg\}.$$
Liu and Bu \cite{Liu2} also determined the maximum value of $\rho_{t}(G)$ among $K_{r+1}$-free graphs with $n$ vertices, which can be viewed as a spectral analogue of $\ex(n,K_{r},kK_{r+1})$.

\begin{thm}[\cite{Liu2}]\label{LiuClq}
Let $G$ be a $K_{r+1}$-free graph on $n$ vertices. Then
$$\rho_{r}(G) \leq \rho(T_{r}(n)),$$
with equality if and only if $G\cong T_{r}(n).$
\end{thm}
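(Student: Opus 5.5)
We read the right-hand side as $\rho_r(T_r(n))$, the $r$-clique spectral radius of the Tur\'an graph; for $r=2$ this is the usual $\rho(T_2(n))$. The plan is a Zykov-type symmetrization applied directly to the variational characterization
$$\rho_{r}(G)=\max\Big\{P_G(x):=r\sum_{\{i_1,\ldots,i_r\}\in C_r(G)}x_{i_1}\cdots x_{i_r}:\ x\in\mathbb{R}_+^n,\ \|x\|_r=1\Big\},$$
which follows from Lemma \ref{QiSpec} and Definition \ref{t-cliq}. Fix a maximizer $x$. For a vertex $u$, let
$$D_u(x)=r\sum_{S\in C_r(G),\,u\in S}\ \prod_{i\in S\setminus\{u\}}x_i .$$
The key observation is this. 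If $u,v$ are nonadjacent, no $r$-clique contains both, so
$$P_G(x)=x_uD_u(x)+x_vD_v(x)+R,$$
where $R$ and the quantities $D_u, D_v$ do not involve $x_u$ or $x_v$.

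\textbf{Symmetrization step.} Suppose $u\not\sim v$ and $D_u(x)\ge D_v(x)$. Replace $v$ by a clone of $u$: delete all edges at $v$ and join $v$ to $N(u)$. The new graph $G'$ is still $K_{r+1}$-free, since a $K_{r+1}$ through $v$ would give one through $u$ (they are nonadjacent twins). In $G'$ we have $D_v=D_u$ and all other terms are unchanged, so $P_{G'}(x)\ge P_G(x)$. Hence $\rho_r(G')\ge\rho_r(G)$. Iterating this in the standard Zykov fashion makes nonadjacency an equivalence relation. Concretely, pick a vertex $u$ maximizing $D_u$ among a nonadjacency class and clone it onto all vertices not adjacent to it; then recurse inside $N(u)$, which is itself $K_r$-free as a graph on fewer vertices. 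This yields a complete multipartite graph with at most $r$ parts and clique spectral radius at least $\rho_r(G)$.

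\textbf{Complete $r$-partite case.} For parts $V_1,\ldots,V_r$ of sizes $n_1,\ldots,n_r$, write $S_j=\sum_{i\in V_j}x_i$, so that $P=r\prod_j S_j$. By the power-mean inequality, $S_j\le n_j^{(r-1)/r}s_j^{1/r}$ with $s_j=\sum_{i\in V_j}x_i^r$ and $\sum_j s_j=1$. By AM--GM, $\prod_j s_j\le r^{-r}$. Hence
$$\rho_r\big(K_{n_1,\ldots,n_r}\big)=\Big(\prod_j n_j\Big)^{(r-1)/r},$$
with equality attained at a constant vector. With fewer than $r$ parts there are no $r$-cliques at all. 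The product $\prod n_j$ with $\sum n_j=n$ is uniquely maximized by balanced sizes, which gives $\rho_r(G)\le\rho_r(T_r(n))$.

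\textbf{Equality case (expected main obstacle).} Suppose $\rho_r(G)=\rho_r(T_r(n))$, and choose $x$ among the maximizers. The first task is to show we may take $x>0$. I would argue that if some $x_v=0$ then $D_v(x)>0$ can be arranged or derived, since otherwise $v$ lies in no $r$-clique carrying weight, and then cloning a high-$D$ vertex onto $v$ and perturbing $x_v$ upward strictly increases $P$, a contradiction. With $x>0$, the Lagrange conditions give $D_u(x)=\rho\, x_u^{r-1}$ for all $u$. Every symmetrization step must then preserve equality. I would show that if $G$ is not already complete $r$-partite, some step either strictly increases $P$, or produces a final complete $r$-partite graph that is unbalanced, or one for which $x$ is not constant on parts. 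Any of these contradicts the strict equality analysis above (equality in power mean forces $x$ constant on each part, and in AM--GM forces $s_j=1/r$). Tracking that some step is strict is the delicate part. The natural first try is to exploit a non-transitive triple $u\not\sim v$, $v\not\sim w$, $u\sim w$. Cloning $v$ onto both $u$ and $w$ removes the edge $uw$ together with the $r$-cliques through it, which carry positive weight since $x>0$, so the sum of the two resulting changes yields a strict gain.
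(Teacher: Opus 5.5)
The paper does not prove this theorem; it is quoted from Liu and Bu, so your argument has to stand on its own. Reading the right-hand side as $\rho_r(T_r(n))$ is correct. The literal $\rho(T_r(n))$ already fails for $T_3(9)$, where $\rho_3=9>6=\rho$.

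Your inequality part is sound. Fixing a maximizer $x$ and running Zykov symmetrization on $P_G(x)$ never decreases $P(x)$ and preserves $K_{r+1}$-freeness. Your computation $\rho_r(K_{n_1,\ldots,n_r})=(\prod_j n_j)^{(r-1)/r}$ is also right. Three small corrections:
\begin{itemize}
\item Choose $u$ maximizing $D_u$ over all current vertices, since there are no ``nonadjacency classes'' yet.
\item Continue inside $N(u)$ with $D$ computed in the whole current graph, because the objective now also contains the cliques through the $u$-class. Do not apply the $K_r$ statement to $G[N(u)]$ in isolation.
\item For unbalanced parts the maximizer is constant on each part ($x_i^r=1/(rn_j)$), not a constant vector.
\end{itemize}
Your positivity argument works. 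If $x_v=0$ and $D_v(x)>0$, raising $x_v$ slightly increases $P$. If $D_v(x)=0$, make $v$ a nonadjacent twin of some $u$ with $D_u(x)>0$. This keeps $P(x)$ unchanged and reduces to the first case in a $K_{r+1}$-free graph, contradicting the inequality. This needs $n\ge r$; for $n<r$ every graph has $\rho_r=0$ and the equality characterization is false. With your normalization of $D_u$ the Lagrange condition reads $D_u=r\rho x_u^{r-1}$, but you never need it.

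\textbf{The gap is the strictness step.} With $x>0$, cloning in both directions gives $D_u(x)=D_v(x)$ for every nonadjacent pair. The exact change under your double clone of $v$ onto $u$ and $w$ is then
\[
x_u(D_v-D_u)+x_w(D_v-D_w)+r\sum_{S\in C_r(G),\,\{u,w\}\subseteq S}x^S=r\sum_{S\in C_r(G),\,\{u,w\}\subseteq S}x^S .
\]
This is positive only if the edge $uw$ lies in some $r$-clique. For $r=2$ that is automatic. For $r\ge 3$, however, a $K_{r+1}$-free graph can have edges lying in no $r$-clique. Nothing in your argument lets you pick the non-transitive triple so that $uw$ lies in an $r$-clique. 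If every non-transitive triple has $uw$ in no $r$-clique, you get no contradiction.

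\textbf{Fix.} Pass to the spanning subgraph $G_0$ of edges lying in at least one $r$-clique.
\begin{itemize}
\item $C_r(G_0)=C_r(G)$, so $P_{G_0}=P_G$. Hence $G_0$ is $K_{r+1}$-free and extremal with the same positive maximizer.
\item Every edge of $G_0$ lies in an $r$-clique, so your single and double clone argument, run inside $G_0$, shows that nonadjacency in $G_0$ is transitive.
\item So $G_0$ is complete multipartite, with exactly $r$ parts because it contains an $r$-clique and no $K_{r+1}$. Your power-mean/AM--GM equality analysis forces balanced parts, so $G_0\cong T_r(n)$.
\item $T_r(n)$ is edge-maximal $K_{r+1}$-free when $n\ge r$, so $G=G_0$.
\end{itemize}
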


Yu and Peng \cite{Peng} extended this result, which can be regarded as a spectral counterpart of Zykov's theorem \cite{Zykov} on generalized Tur\'an numbers.

\begin{thm}\label{PengClq}
Let $G$ be a $K_{r+1}$-free graph on $n$ vertices. Then for any $2\leq t \leq r$,
$$\rho_{t}(G) \leq \rho_{t}(T_{r}(n)),$$
with equality if and only if $G\cong T_{r}(n).$
\end{thm}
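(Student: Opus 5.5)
Fix $2\le t\le r$ and a $K_{r+1}$-free graph $G$ on $n$ vertices. The plan is a Zykov-symmetrization argument carried out on the Perron vector, followed by optimization over complete $r$-partite graphs.

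Let $x\in\mathbb{R}_+^n$ with $\|x\|_t=1$ attain the maximum in the variational formula, so that $\rho_t(G)=t\sum_{S\in C_t(G)}\prod_{i\in S}x_i$. For a vertex $v$, let $L_v(x)=\sum_{S\in C_t(G),\,v\in S}\prod_{i\in S\setminus\{v\}}x_i$ be the $v$-link weight; the eigen-equation gives $L_v(x)=\rho_t(G)x_v^{t-1}$ whenever $x_v>0$.

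\textbf{Step 1 (symmetrization).} Take two nonadjacent vertices $u,v$ and assume without loss of generality that $L_u(x)/x_u^{t-1}\ge L_v(x)/x_v^{t-1}$. In fact, at the optimum both ratios equal $\rho_t(G)$. Replace $v$ by a clone of $u$: delete the edges at $v$ and join $v$ to $N(u)$. The new graph is still $K_{r+1}$-free, because $u$ and $v$ are nonadjacent twins and a clique contains at most one of them.

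Since $u\nsim v$, no $t$-clique contains both vertices. Hence the objective is affine in the "$v$-contribution": the new value equals the old value minus $t\,x_vL_v(x)$ plus $t\,x_vL_u(x)|_{x_v}$. Here we must observe that $L_u$ does not depend on $x_v$, again because $u\nsim v$. A cleaner route is to also replace $x_v$ by $x_u$ and rescale. The standard approach in Liu--Bu and Yu--Peng compares $x_u^{-(t-1)}L_u$ with $x_v^{-(t-1)}L_v$, and both equal $\rho$. With that, one gets $\rho_t(G')\ge\rho_t(G)$.

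Iterating this along Zykov's scheme (equivalence classes of non-adjacency) turns $G$ into a complete multipartite graph. It has at most $r$ parts, since it is $K_{r+1}$-free, and it still satisfies $\rho_t\ge\rho_t(G)$.

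\textbf{Step 2 (balancing).} For a complete $r$-partite graph with part sizes $n_1,\dots,n_r$ (empty parts allowed), symmetry within each part lets us take $x$ constant on parts, say $x_i=y_j$ on part $j$. Then
\[
\rho_t=\max\Big\{t\,e_t(n_1y_1,\dots,n_ry_r):\ \textstyle\sum_j n_jy_j^t=1\Big\},
\]
where $e_t$ is the elementary symmetric polynomial. Substituting $z_j=n_j^{1/t}y_j$ gives $\rho_t=\max\{t\,e_t(n_1^{1-1/t}z_1,\dots):\|z\|_t=1\}$.

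If $n_1\ge n_2+2$, move one vertex from part 1 to part 2. Fixing the other coordinates, the function is multilinear in the pair, so one must show that $t\,e_t$ weighted by $n_1^{(t-1)/t}z_1$ and $n_2^{(t-1)/t}z_2$ does not decrease. I would compare via the test vector obtained by averaging, using concavity of $s\mapsto s^{(t-1)/t}$. This balancing inequality is the main obstacle. The first thing I would try is a direct two-variable optimization: fix everything except $(z_1,z_2)$ on the circle $z_1^t+z_2^t=c$ and show the maximum of $a z_1+b z_2+d z_1z_2$ increases when $(n_1,n_2)$ becomes more balanced. This should follow from Muirhead/Karamata-type majorization.

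\textbf{Step 3 (equality).} For the equality case, track strictness. If $G$ is not complete $r$-partite, a symmetrization step strictly increases $\rho_t$. The strictness comes from connectedness of $H_t$ and positivity of the Perron vector: a deleted edge that lies in some $t$-clique with positive weight strictly changes the value, or the Perron–Frobenius uniqueness for weakly irreducible tensors applies. Steps 1–3 together give $\rho_t(G)\le\rho_t(T_r(n))$ with equality only for $T_r(n)$.
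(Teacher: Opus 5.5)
The paper does not prove this theorem; it quotes it from Yu and Peng. So I am judging your sketch on its own, and it has genuine gaps in Steps~2 and~3.

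\textbf{Step 1.} The idea is sound but loosely stated. By your own formula the change is $t\,x_v\bigl(L_u(x)-L_v(x)\bigr)=t\rho\,x_v\bigl(x_u^{t-1}-x_v^{t-1}\bigr)$. So you must clone the vertex with the larger entry; your ratio condition is vacuous, since both ratios equal $\rho$. The iteration also needs a definite scheme. For example, clone all non-neighbours of a vertex of maximum entry at once, then recurse inside its neighbourhood. Cliques through several of these vertices are over-counted in $\sum_u x_uL_u(x)$, so the gain stays nonnegative.

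\textbf{Step 2 is the main gap.} You leave the balancing inequality as a hope, and it is not a routine majorization. On $z_1^t+z_2^t=c$, the linear part $n_1^{(t-1)/t}z_1+n_2^{(t-1)/t}z_2$ has maximum $c^{1/t}(n_1+n_2)^{(t-1)/t}$ by H\"older. This maximum is unchanged by moving a vertex, so everything hinges on the interaction term at the actual optimizer.

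What works is the argument of Claim~\ref{C2} in the proof of Theorem~\ref{Main}, taken with $W=\emptyset$. Let $m_l=s_l\alpha_l$ be the mass of part $V_l$ under the Perron vector. Then $e_t(m)$ depends on $(m_i,m_j)$ only through $m_i+m_j$ and $m_im_j$, the latter with coefficient $\beta_1=e_{t-2}(m_l:l\ne i,j)>0$. Move one vertex from $V_i$ to $V_j$ (with $s_i\ge s_j+2$) while keeping its entry $\alpha_i$. This preserves $\|x\|_t$ and $m_i+m_j$, and changes $m_im_j$ by $\alpha_i\bigl((s_i-1)\alpha_i-s_j\alpha_j\bigr)$. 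Put $\beta_2=e_{t-1}(m_l:l\ne i,j)$. The eigen-equations $\rho\alpha_i^{t-1}=s_j\alpha_j\beta_1+\beta_2$ and $\rho\alpha_j^{t-1}=s_i\alpha_i\beta_1+\beta_2$ give first $\alpha_i<\alpha_j$, and then $(s_i-1)\alpha_i>s_j\alpha_j$. This yields a strict increase. You also need the easy step of splitting a part when fewer than $r$ parts are nonempty.

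\textbf{Step 3 is also not a proof.} Strictness cannot come from ``connectedness of $H_t$ and positivity of the Perron vector'', for three reasons. For a general $K_{r+1}$-free $G$ neither property holds. The cloning gain above is $0$ when $x_u=x_v$. For $t\ge 3$, edges lying in no $t$-clique do not affect $\rho_t$ at all.

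A route that works:
\begin{enumerate}
\item Suppose $\rho_t(G)=\rho_t(T_r(n))$. Then $H_t(G)$ is connected. Otherwise, by Lemma~\ref{Shao-diag}, a component on $m<n$ vertices carries $\rho_t(G)$, while $\rho_t(T_r(m))<\rho_t(T_r(n))$. Hence $x>0$.
\item Cloning then forces $x_u=x_v$ for every non-adjacent pair. So $x$ is also an eigenvector of the cloned graph for the same eigenvalue.
\item Compare the two eigen-equations at a third vertex $w$. This shows that every edge between $\{u,v\}$ and $N(u)\triangle N(v)$ lies in no $t$-clique.
\item Delete all edges lying in no $t$-clique; this changes neither $\A_t(G)$ nor $x$. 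In the resulting graph, non-adjacent vertices are twins, so it is complete multipartite, hence $T_r(n)$ by strict balancing.
\item Since $T_r(n)$ is edge-maximal $K_{r+1}$-free, $G=T_r(n)$.
\end{enumerate}
This also avoids the transitivity form of Zykov's argument. Its triple case $u\not\sim v$, $v\not\sim w$, $u\sim w$ only guarantees a gain of $t\,x_ux_w$ times the weight of $t$-cliques through $uw$, which can be $0$ for $t\ge 3$.
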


For further results on spectral versions of the generalized Tur\'an problem, we refer the reader to \cite{Peng2, Peng3, Liu3}.
In this paper, we determine the maximum $t$-clique spectral radius among all $kK_{r+1}$-free graphs on $n$ vertices.
Our results can be viewed as a spectral analogue of Theorem \ref{Gernb}.

\begin{thm}\label{Main}
Let $G$ be a $kK_{r+1}$-free graph on $n$ vertices.
Then, for integers $r\geq t\geq2$, $k\geq 2$, and sufficiently large $n$,
$$\rho_{t}(G) \leq \rho_{t}(K_{k-1}\vee T_{r}(n-k+1)),$$
with equality if and only if $G\cong K_{k-1}\vee T_{r}(n-k+1)$.
\end{thm}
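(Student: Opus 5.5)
We follow the standard stability route for spectral Turán problems, carried out with the $t$-clique tensor. Let $G$ be an extremal graph, i.e.\ a $kK_{r+1}$-free graph on $n$ vertices maximizing $\rho_t(G)$. Let $x\in\mathbb{R}_+^n$ with $\|x\|_t=1$ attain the maximum in the variational formula following Lemma \ref{QiSpec}. We may assume $G$ is connected, or more precisely that $H_t(G)$ is connected on the relevant vertices; otherwise adding edges only helps. Then Perron--Frobenius theory for weakly irreducible nonnegative tensors makes $x$ positive, and the eigen-equation reads
$$\rho_t x_v^{t-1}=\sum_{S\in C_{t-1}(G[N(v)])}\prod_{u\in S}x_u \quad\text{for every vertex } v.$$
Write $G^*=K_{k-1}\vee T_r(n-k+1)$ and $\rho^*=\rho_t(G^*)$.

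\emph{Lower bound.} Testing the variational formula on $G^*$ with the uniform vector gives
$$\rho_t(G)\ge \rho^*\ge t\,c_t(G^*)/n,$$
and hence $\rho_t(G)\ge t\binom{r}{t}(n/r)^{t-1}(1+o(1))$. Note also that $\rho_t(G)\le t\,c_t(G)^{(t-1)/t}\cdot(\text{const})$, via Hölder, i.e.\ the Liu--Bu/Nikiforov type bound.

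\emph{Stability.} $G$ is $kK_{r+1}$-free, so deleting the vertices of a maximal family of disjoint copies of $K_{r+1}$ (at most $(k-1)(r+1)$ vertices) leaves a $K_{r+1}$-free graph. By the removal lemma, $G$ has $o(n^{r+1})$ copies of $K_{r+1}$. Combining the large spectral radius with a clique-counting bound and Zykov/Erdős--Simonovits stability for $K_t$-counts, we get that $G$ is $o(n^2)$-close to $T_r(n)$: there is a partition $V_1,\dots,V_r$ with $|V_i|=n/r+o(n)$ and $o(n^2)$ non-crossing edges.

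\emph{Structure.} Next we show that the entries of $x$ are essentially balanced: $x_v=\Theta(n^{-1/t})$ for all but few vertices, and $x_v=(1+o(1))\max x$ for typical $v$. This comes from the eigen-equation, since a typical vertex sees about $\binom{r-1}{t-1}(n/r)^{t-1}$ cliques of size $t-1$ in its neighbourhood. Let $W$ be the set of vertices having $\varepsilon n$ neighbours inside their own part (after choosing the part for each vertex that maximizes crossing neighbours), together with vertices of small degree.

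First, vertices of small degree are removed by a deletion argument. If $x_v$ is small, replace $v$'s neighbourhood by a copy of a typical vertex's neighbourhood; this keeps the graph $kK_{r+1}$-free, which is checked via the structure, and increases $\rho_t$. Consequently every vertex has $x_v\ge c\max x$.

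Second, we show $|W|\le k-1$. Every vertex in $W$ has linear neighbourhood in all parts, so it is the apex of a $K_{r+1}$ with an almost complete $r$-partite remainder. Given $k$ such vertices, we could greedily build $k$ disjoint copies of $K_{r+1}$, a contradiction. Moreover, $G-W$ has no edge inside a part. Indeed, such an edge together with $k-1$ vertices of $W$, or with vertices forming disjoint cliques in the near-complete $r$-partite graph, yields $kK_{r+1}$ once $|W|=k-1$. When $|W|<k-1$, the count is handled by a counting argument: then $G$ is contained in $K_s\vee$($r$-partite graph plus few edges), and moving to $G^*$ strictly increases $\rho_t$ via the eigen-equation comparisons.

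\emph{Conclusion.} Finally, with $|W|=k-1$ and $G-W$ being $r$-partite, maximality forces $W$ to be a clique joined to everything and $G-W$ to be complete $r$-partite. It remains to show the parts are balanced. For this we use a Rayleigh-quotient comparison: moving a vertex from a larger part to a smaller part increases $\rho_t$. This follows from the symmetric eigenvector on complete multipartite-plus-join graphs together with the standard inequality that $\sum_{t\text{-subsets}}\prod$ of part sizes is maximized at balance. This identifies $G\cong G^*$.

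The main obstacle is the step that kills edges inside parts when $|W|<k-1$, and more generally the precise eigenvector estimates needed to compare $\rho_t$ after local modifications. The tensor eigen-equation is nonlinear of degree $t-1$, so the usual linear perturbation inequalities must be replaced by estimates of the form
$$\rho_t(G')-\rho_t(G)\ge t\Big(\sum_{C_t(G')}-\sum_{C_t(G)}\Big)x^{C},$$
together with lower bounds on $x$. My first attempt is to show that one extra edge inside a part of $G-W$ contributes only $O(n^{t-2})$ cliques, whereas adding a universal vertex in its place contributes $\Theta(n^{t-1})$ weighted cliques.
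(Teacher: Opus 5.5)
\textbf{The main gap is the step you yourself flag: removing edges inside the parts of $G-W$ when $|W|<k-1$.} Knowing only that $G\subseteq K_s\vee(\text{$r$-partite graph}+\text{few edges})$ does not let you ``move to $G^*$''. In general $G$ is not contained in any $K_{k-1}\vee K_r(s_1,\dots,s_r)$: a triangle inside a part has matching number $1$ but needs two apices to cover it. So monotonicity is unavailable, and a global replacement by $G^*$ cannot be evaluated with the Perron vector of $G$. The paper closes this in three moves.
\begin{enumerate}
\item[(i)] The in-part graph $F=\bigcup_i G[V_i\setminus W]$ has $\mu(F)\le k-1-|W|$. A matching of size $k-|W|$ extends greedily to $k-|W|$ disjoint copies of $K_{r+1}$ avoiding $W$, and each $w\in W$ then supplies one more.
\item[(ii)] $\Delta(F)\le (k-1)(r+1)$ (Lemma \ref{v-indeg}), proved by extremality. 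Otherwise join $v$ to its whole part: $\rho_t$ strictly increases, and any $kK_{r+1}$ in the new graph can be rebuilt in $G$ through $v$ and one of its original in-part neighbours. Chv\'atal--Hanson then gives $e(F)=O(1)$.
\item[(iii)] Delete $E(F)$ and join a set $S$ of $k-1-|W|$ vertices of $V_1\setminus W$ to the rest of $V_1\setminus W$. Every $K_{r+1}$ of the new graph meets $W\cup S$, so it is $kK_{r+1}$-free. Since $x_v\ge c_0\max x$ for all $v$, the gain is $\Omega(n^{t-1})$ against a loss of $O(n^{t-2})$.
\end{enumerate}
Your per-edge heuristic is the right comparison, but it needs a bound on $e(F)$. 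With your threshold $\varepsilon n$ for $W$ and no degree lemma, you only get $e(F)=O(\varepsilon n)$. That is a loss of the same order $n^{t-1}$ as the gain, which you would have to beat by tying $\varepsilon$ explicitly to $c_0$.

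\textbf{Other issues.} Several earlier steps are asserted rather than proved, and two claims are false.
\begin{enumerate}
\item[(a)] The lower bound is $t\,c_t(G^*)/n=\binom{r-1}{t-1}(n/r)^{t-1}(1+o(1))$; your $t\binom{r}{t}(n/r)^{t-1}$ is $r$ times too large.
\item[(b)] The claim $x_v=(1+o(1))\max x$ for typical $v$ is wrong. In $G^*$ the apex entries exceed the typical ones by a factor tending to $(r/(r-t+1))^{1/(t-1)}>1$. Only $x_v\ge c\max x$ holds, and that is all you use.
\item[(c)] For stability, a H\"older bound $\rho_t\le C\,c_t^{(t-1)/t}$ with an unspecified constant cannot convert $\rho_t(G)\approx\binom{r-1}{t-1}(n/r)^{t-1}$ into a near-Tur\'an $K_t$-count. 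You need an inequality that is sharp at $T_r(n)$ for $K_{r+1}$-free graphs, or simply Liu--Bu's spectral stability theorem (Lemma \ref{Stability}).
\item[(d)] For low-degree vertices you never show that $x_v$ is small. This requires bounding the weighted $(t-1)$-clique count in $N(v)$. The paper does this via the $kK_r$-freeness of cross-part neighbourhoods (Lemma \ref{v-neigh}), Gerbner's theorem, and separating off the large entries on $W$.
\item[(e)] Copying a typical neighbourhood needs a freeness check you do not supply. Joining $u$ only to the independent sets $I_2,\dots,I_r$, as the paper does, puts $u$ in no $K_{r+1}$, so freeness is immediate.
\item[(f)] ``Adding edges only helps'' for $t$-clique connectivity must preserve $kK_{r+1}$-freeness, which is the content of Lemma \ref{t-clq-conn}.
\item[(g)] Balance of the parts does not follow from the clique count $\sum_{T}\prod_{i\in T}s_i$ being maximized at balance. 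The Perron vector is not constant across parts of different sizes. You need the eigen-equation argument showing $(s_i-1)\alpha_i>s_j\alpha_j$ whenever $s_i\ge s_j+2$.
\end{enumerate}
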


\begin{remark} \rm
Theorem \ref{Main} serves as a spectral counterpart of Theorem \ref{Gernb}.
In particular, when $t=2$, it reduces to Theorem \ref{NiK}.
So, Theorem \ref{Main} provides a unified spectral extension of these classical extremal results.
\end{remark}

The remainder of this paper is organized as follows.
In Section 2, we introduce notation and collect several auxiliary lemmas.
In Section 3, we first prove the $t$-clique connectivity of the extremal graph,
and then derive some structural properties that characterize it.
We subsequently analyze the entries of a corresponding eigenvector.
These ingredients are combined to complete the proof of Theorem \ref{Main}.

\section{Preliminaries}\label{sec2}
Let $G$ be a graph with vertex set $V(G)$ and edge set $E(G)$.
For a vertex $v\in V(G)$ and $S\subseteq V(G)$,
let $N_{G}(v)$ denote the neighborhood of $v$ in $G$, and let $d_{G}(v) = |N_{G}(v)|$ be its degree.
For convenience, we write $N_{S}(v):=N_G(v) \cap S$ and $d_{S}(v) = |N_{S}(v)|$.
For $S\subseteq V(G)$, let $G[S]$ denote the subgraph of $G$ induced by $S$,
and let $G-S$ denote the graph obtained by deleting the vertices in $S$ together with all incident edges.

\begin{definition}[\cite{Yang1}]
 Let $\A$ be an order $m$ and dimension $n$ tensor.
 If there exists a nonempty proper subset $I \subseteq [n]$ such that $a_{i_1 i_2 \cdots i_m} = 0$ whenever $i_1 \in I$ and $\{i_2, \ldots, i_m\} \not\subseteq I$, then $\A$ is called weakly reducible. Otherwise, $\A$ is called weakly irreducible.
 \end{definition}

\begin{lem}[\cite{Yang2,Friedland})]
Let $\A$ be an order $m$ and dimension $n$ nonnegative tensor.
Then its spectral radius $\rho(\A)$ is an eigenvalue of $\A$ corresponding to a nonnegative eigenvector.
If $\A$ is weakly irreducible, then $\rho(\A)$ is the unique eigenvalue of $\A$ corresponding to a unique positive eigenvector up to scaling.
\end{lem}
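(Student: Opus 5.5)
The plan is to follow the classical Perron--Frobenius route, adapted to the homogeneous map $x\mapsto \A x^{m-1}$. There are three steps: first treat strictly positive tensors by a fixed-point argument, then pass to general nonnegative tensors by perturbation, and finally use weak irreducibility to upgrade nonnegativity to positivity and to obtain uniqueness.

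\textbf{Step 1 (positive tensors).} Suppose every entry of $\A$ is positive. Let $\Delta=\{x\in\mathbb{R}_+^n:\sum_i x_i=1\}$ and define $T:\Delta\to\Delta$ by
\[ T(x)=\frac{(\A x^{m-1})^{[1/(m-1)]}}{\sum_i (\A x^{m-1})_i^{1/(m-1)}}. \]
This map is well defined and continuous, since $\A x^{m-1}>0$ for every $x\in\Delta$. By Brouwer's fixed point theorem there is $x\in\Delta$ with $\A x^{m-1}=\lambda x^{[m-1]}$ for some $\lambda>0$, and then $x>0$ automatically.

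To show $\lambda=\rho(\A)$, take any eigenpair $(\mu,y)$. The triangle inequality gives $|\mu|\,|y|^{[m-1]}\le \A|y|^{m-1}$ coordinatewise. Put $s=\max_i |y_i|/x_i$, attained at an index $i$, so that $|y|\le s x$. Evaluating at coordinate $i$,
\[ |\mu|\,s^{m-1}x_i^{m-1}\le (\A|y|^{m-1})_i\le s^{m-1}(\A x^{m-1})_i=\lambda s^{m-1}x_i^{m-1}, \]
hence $|\mu|\le\lambda$.

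\textbf{Step 2 (nonnegative tensors).} For a general nonnegative $\A$, let $\mathcal{E}$ be the all-ones tensor and apply Step 1 to $\A_\varepsilon=\A+\varepsilon\mathcal{E}$. This gives $x_\varepsilon\in\Delta$ and $\lambda_\varepsilon=\rho(\A_\varepsilon)$. By compactness we can pass to a subsequence with $x_\varepsilon\to x\in\Delta$ and $\lambda_\varepsilon\to\lambda$, and then $\A x^{m-1}=\lambda x^{[m-1]}$ with $x\ge 0$, $x\neq 0$. It remains to identify $\lambda$ with $\rho(\A)$.
\begin{itemize}[leftmargin=*]
\item $\lambda\le\rho(\A)$ holds because $\lambda$ is an eigenvalue of $\A$.
\item For the reverse inequality, note that any eigenpair $(\mu,y)$ of $\A$ satisfies $|\mu|\,|y|^{[m-1]}\le \A_\varepsilon |y|^{m-1}$. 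The comparison argument of Step 1, applied to $\A_\varepsilon$ with its positive eigenvector $x_\varepsilon$, then gives $|\mu|\le \lambda_\varepsilon$. Letting $\varepsilon\to0$ yields $|\mu|\le\lambda$.
\end{itemize}
This route avoids needing continuity of the spectrum. The main delicate point is exactly this comparison $\rho(\A)\le\lim\lambda_\varepsilon$; it works because the positive eigenvector of each $\A_\varepsilon$ is used directly as a test vector.

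\textbf{Step 3 (weakly irreducible case).} Assume $\A$ is weakly irreducible and let $x\ge0$ be the eigenvector from Step 2. The argument has three parts.
\begin{itemize}[leftmargin=*]
\item \emph{Positivity of $x$.} Let $I=\{i:x_i=0\}$ and suppose $I\neq\emptyset$ (note $I\neq[n]$ since $x\neq0$). For $i\in I$ we have $0=\rho\, x_i^{m-1}=\sum a_{i i_2\cdots i_m}x_{i_2}\cdots x_{i_m}$, with every term nonnegative. Hence $a_{i i_2\cdots i_m}=0$ whenever all of $i_2,\ldots,i_m$ lie outside $I$. This is a zero pattern, but not literally the one in the definition of weak reducibility. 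I would therefore pass to the representative matrix $M$ with $m_{ij}=\sum\{a_{i i_2\cdots i_m}: j\in\{i_2,\ldots,i_m\}\}$; weak irreducibility of $\A$ is equivalent to strong connectivity of the digraph of $M$. The more robust fix is to run the argument on the complementary set $J=[n]\setminus I$ using the $(m-1)$-homogeneous structure, as in the original proofs of Friedland--Gaubert--Han and Yang--Yang.
\item \emph{Uniqueness of the eigenvector.} Let $y>0$ be another eigenvector, with eigenvalue $\mu$. Replacing $(x,y)$ by $(y,x)$ in Step 1's comparison gives $\rho\le\mu$, so $\mu=\rho$. Now set $s=\min_i x_i/y_i$, so that $x\ge sy$, and let $I=\{i:x_i=sy_i\}\neq\emptyset$. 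For $i\in I$,
\[ \rho x_i^{m-1}=(\A x^{m-1})_i\ge (\A(sy)^{m-1})_i=\rho\, s^{m-1}y_i^{m-1}=\rho x_i^{m-1}. \]
Equality therefore forces $a_{i i_2\cdots i_m}=0$ whenever some index $i_j\notin I$, which is exactly the defining zero pattern of weak reducibility. Weak irreducibility then gives $I=[n]$, i.e. $x=sy$.
\item \emph{Uniqueness of the eigenvalue.} The same comparison shows that $\rho$ is the only eigenvalue admitting a positive eigenvector.
\end{itemize}

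I expect the positivity step to be the main obstacle. It requires carefully matching the zero pattern forced by $x_i=0$ with the definition of weak irreducibility through the strongly connected digraph of $M$.
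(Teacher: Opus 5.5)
\textbf{Gap: positivity of $x$ in Step 3 is not proved, and the argument you sketch cannot prove it.}

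Steps 1 and 2 are correct. For the compactness in Step 2, add that $\lambda_\varepsilon\le \max_i\sum_{i_2,\ldots,i_m}a_{i i_2\cdots i_m}+n^{m-1}$ for $\varepsilon\le1$; this follows by reading the eigen-equation at a largest coordinate of $x_\varepsilon$. The uniqueness part of Step 3 is also correct once $x>0$ is known. Note that it genuinely needs $s>0$ to pass from equality of products to equality of factors.

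The pattern forced by $x_i=0$ is: $a_{i i_2\cdots i_m}=0$ for $i\in I$ and all $i_2,\ldots,i_m\notin I$. That is reducibility in the Chang--Pearson--Zhang sense, not weak reducibility. It says nothing about entries whose trailing indices meet both $I$ and $J=[n]\setminus I$. Hence it says nothing about $m_{ij}$ for $i\in I$, $j\in J$, so your representative-matrix repair does not work.

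In fact no argument that uses only ``$x$ is a nonnegative eigenvector'' can work. Take the $3$-uniform hypergraph with edges $\{1,2,3\}$ and $\{1,4,5\}$. It is connected, so its adjacency tensor is weakly irreducible. Yet $(1,1,1,0,0)$ is a nonnegative eigenvector, for an eigenvalue smaller than $\rho$. Positivity must therefore use how your $x$ was constructed.

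This is not a corner case for the paper. For $t\ge3$ a $t$-clique tensor is never irreducible in the Chang--Pearson--Zhang sense: take $I=[n]\setminus\{v\}$, and all entries $a_{i v\cdots v}$ vanish. So the weakly irreducible statement is exactly what supplies the positive eigenvector used throughout Section~3. The paper gives no proof of this lemma and only quotes it.

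\textbf{How to close the gap.} The missing idea is a bound on $\max_k x_{\varepsilon,k}/\min_k x_{\varepsilon,k}$ that is uniform in $\varepsilon$, along your approximating family.
\begin{enumerate}
\item Rescale $x_\varepsilon$ so that its minimum coordinate is $1$, attained at $k_0$. Let $R$ bound all $\lambda_\varepsilon$ for $\varepsilon\le1$, and let $a>0$ be the smallest positive entry of $\A$.
\item Let $i\to j$ be an arc of the digraph of $M$, and choose $a_{i i_2\cdots i_m}>0$ with $j\in\{i_2,\ldots,i_m\}$. Since every coordinate is at least $1$,
\[ R\,x_{\varepsilon,i}^{m-1}\ \ge\ \lambda_\varepsilon x_{\varepsilon,i}^{m-1}=(\A_\varepsilon x_\varepsilon^{m-1})_i\ \ge\ a_{i i_2\cdots i_m}\,x_{\varepsilon,i_2}\cdots x_{\varepsilon,i_m}\ \ge\ a\,x_{\varepsilon,j}. \]
\item Weak irreducibility means this digraph is strongly connected. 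Following paths of length at most $n-1$ from $k_0$ gives $\max_k x_{\varepsilon,k}\le K$, with $K$ depending only on $R,a,m,n$.
\item Hence every coordinate of $x_\varepsilon/\|x_\varepsilon\|_1$ is at least $1/(nK)$. Every limit point, in particular your $x$, is therefore positive.
\end{enumerate}
Your uniqueness argument then finishes the proof. This uniform boundedness of (super-)eigenvectors under strong connectivity is the standard device behind the weakly irreducible Perron--Frobenius theorem.
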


For two real tensors $\A$ and $\mathcal{B}$ of same order and dimension,
we write $\mathcal{B} < \A$ if $\A - \mathcal{B}$ is a nonnegative and nonzero tensor.

\begin{lem}[\cite{Khan})]\label{Khan-rho}
Let $\A$ and $\mathcal{B}$ be two nonnegative tensors of same order and dimension.
If $\A < \mathcal{B}$ and $\mathcal{B}$ is weakly irreducible,
then $\rho(\A) < \rho(\mathcal{B})$.
\end{lem}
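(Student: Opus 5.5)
We argue by contradiction. Suppose $\rho(\A)\ge\rho(\mathcal{B})$, and compare a nonnegative eigenvector of $\A$ with the positive Perron vector of $\mathcal{B}$. Let $m$ be the common order. By the Perron--Frobenius lemma above, $\rho(\A)$ is an eigenvalue of $\A$ with a nonnegative eigenvector $x\neq 0$, so $\A x^{m-1}=\rho(\A)x^{[m-1]}$. Since $\mathcal{B}$ is weakly irreducible, the same lemma gives a positive vector $y$ with $\mathcal{B}y^{m-1}=\rho(\mathcal{B})y^{[m-1]}$. Because $\mathcal{B}-\A$ is nonnegative and $x\ge 0$, we get the entrywise chain
$$\mathcal{B}x^{m-1}\ \ge\ \A x^{m-1}=\rho(\A)x^{[m-1]}\ \ge\ \rho(\mathcal{B})x^{[m-1]}.$$

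The key step is to show that any nonnegative nonzero $x$ with $\mathcal{B}x^{m-1}\ge\rho(\mathcal{B})x^{[m-1]}$ is a positive multiple of $y$. Set $c=\max_i x_i/y_i$; then $c>0$ and $x\le cy$ entrywise. Let $I=\{i: x_i=cy_i\}$, which is nonempty. For $i\in I$, monotonicity of $\mathcal{B}z^{m-1}$ in nonnegative $z$ gives
$$\rho(\mathcal{B})x_i^{m-1}\le(\mathcal{B}x^{m-1})_i\le(\mathcal{B}(cy)^{m-1})_i=\rho(\mathcal{B})c^{m-1}y_i^{m-1}=\rho(\mathcal{B})x_i^{m-1}.$$
So equality holds throughout. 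Hence every term with $b_{i i_2\cdots i_m}>0$ satisfies $x_{i_2}\cdots x_{i_m}=c^{m-1}y_{i_2}\cdots y_{i_m}$. Since $x_{i_k}\le c y_{i_k}$ and $y>0$, this forces $x_{i_k}=cy_{i_k}$, i.e.\ $i_2,\dots,i_m\in I$. Thus $b_{i_1\cdots i_m}=0$ whenever $i_1\in I$ and $\{i_2,\dots,i_m\}\not\subseteq I$. If $I$ were a proper subset of $[n]$, $\mathcal{B}$ would be weakly reducible, so $I=[n]$ and $x=cy>0$.

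It then follows that $x$ is positive and all inequalities in the chain are equalities. In particular $\mathcal{B}x^{m-1}=\A x^{m-1}$, so $(\mathcal{B}-\A)x^{m-1}=0$. But $\mathcal{B}-\A$ is nonnegative with some entry $e_{i_1\cdots i_m}>0$, and $x>0$. Therefore $((\mathcal{B}-\A)x^{m-1})_{i_1}\ge e_{i_1\cdots i_m}x_{i_2}\cdots x_{i_m}>0$, a contradiction. Hence $\rho(\A)<\rho(\mathcal{B})$.

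The main obstacle is the rigidity step in the second paragraph. It is the only place weak irreducibility is used, and it requires carefully propagating the equality case through the index set $I$.
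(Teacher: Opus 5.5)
Your proof is correct and complete. The paper does not prove this lemma itself; it is quoted from Khan--Fan~\cite{Khan} and used as a black box, so there is no in-paper argument to compare with.

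What you have written is the standard Perron--Frobenius comparison. Your second paragraph is the tensor version of the subinvariance theorem for irreducible matrices: a nonnegative nonzero $x$ with $\mathcal{B}x^{m-1}\ge\rho(\mathcal{B})x^{[m-1]}$ must be a multiple of the Perron vector. You prove it directly by showing that $I=\{i: x_i=cy_i\}$ is closed in exactly the sense that weak irreducibility forbids. The product step is valid because $y>0$ makes every factor $cy_{i_k}$ positive, so a single strict inequality $x_{i_k}<cy_{i_k}$ would make the whole product strictly smaller.

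The merits of this route are the following.
\begin{itemize}
\item It uses only the two facts recorded in the paper's Perron--Frobenius lemma: a nonnegative eigenvector for $\rho(\A)$, and a positive one for the weakly irreducible $\mathcal{B}$. Uniqueness is never needed.
\item It uses no symmetry, so the lemma is obtained in the generality stated.
\end{itemize}

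For the symmetric $t$-clique tensors the paper actually needs, you could instead start from the variational formula of Lemma~\ref{QiSpec}. If $\rho(\A)=\rho(\mathcal{B})$, a maximizer for $\A$ is also a maximizer for $\mathcal{B}$, and hence a nonnegative eigenvector of $\mathcal{B}$ for $\rho(\mathcal{B})$. To reach a contradiction you would still have to show that this eigenvector is positive, and that is precisely your rigidity step. So your argument contains the essential content in either setting.
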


\begin{lem}[\cite{Shao})]\label{Shao-diag}
 Let $\A$ be a weakly reducible tensor.
 Then, $\A$ is permutationally similar to a lower triangular block tensor such that each diagonal block tensor is weakly irreducible.
 Furthermore, one of these diagonal block tensors has spectral radius $\rho(\A)$.
\end{lem}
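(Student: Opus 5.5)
Throughout I take $\A$ to be nonnegative, as it is in every application in this paper; the last assertion uses Perron--Frobenius theory and is only claimed in that setting. The plan is to build the block decomposition by induction on the dimension $n$, using the defining closed set of weak reducibility at each step. Afterwards I compare $\rho(\A)$ with the spectral radii of the diagonal blocks via a nonnegative eigenvector.

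\emph{Structure.} Call $I\subseteq[n]$ \emph{closed} if $a_{i_1\cdots i_m}=0$ whenever $i_1\in I$ and $\{i_2,\ldots,i_m\}\not\subseteq I$. By definition, $\A$ is weakly reducible exactly when a nonempty proper closed set $I$ exists. Fix such an $I$ and put $J=[n]\setminus I$. Relabel the indices so that those of $I$ come first. For a partition $[n]=V_1\cup\cdots\cup V_p$ into consecutive blocks, ``lower triangular block form'' means $a_{i_1\cdots i_m}=0$ whenever $i_1\in V_s$ and some $i_j\in V_q$ with $q>s$. Closedness of $I$ gives exactly this for the two-block partition $(I,J)$.

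Next apply the induction hypothesis separately to the principal subtensors $\A[I]$ and $\A[J]$. Each is either weakly irreducible (in particular, any tensor of dimension $1$ is, since it has no nonempty proper subset) or splits as $V_1,\ldots,V_a$, respectively $V_{a+1},\ldots,V_p$, with weakly irreducible diagonal blocks in triangular position. I then check that the concatenated partition is triangular for $\A$ itself:
\begin{itemize}[leftmargin=*]
\item For $i_1\in V_s\subseteq I$, closedness forces all other indices into $I$. Inside $I$ the triangular property of $\A[I]$ applies.
\item For $i_1\in V_s\subseteq J$, any other index is either in $I$, which consists of earlier blocks and is allowed, or in $J$, where the triangularity of $\A[J]$ applies.
\end{itemize}
The diagonal blocks $\A_s=\A[V_s]$ are unchanged by this gluing, so they remain weakly irreducible. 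This proves the first assertion.

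\emph{Spectral radius.} I will show $\rho(\A)=\max_s\rho(\A_s)$.

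For ``$\le$'', let $x\ge 0$, $x\neq 0$, be an eigenvector for $\rho(\A)$, which exists by the lemma of Yang--Yang/Friedland quoted above. Let $s$ be the smallest index with $y:=x|_{V_s}\neq0$. For $i\in V_s$, every nonzero term of $(\A x^{m-1})_i$ has its remaining indices in $V_1\cup\cdots\cup V_s$. Terms touching $V_1,\ldots,V_{s-1}$ vanish because $x$ is zero there. Hence $(\A_s y^{m-1})_i=\rho(\A)y_i^{m-1}$, so $\rho(\A)$ is an eigenvalue of $\A_s$ and $\rho(\A)\le\rho(\A_s)$.

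For ``$\ge$'', fix any $s$ and let $\mathcal B$ be $\A_s$ padded with zeros to dimension $n$, so that $0\le\mathcal B\le\A$ entrywise. Take a nonnegative Perron vector $z$ of $\A_s$ and extend it by zeros. It is an eigenvector of $\mathcal B$ for $\rho(\A_s)$: coordinates outside $V_s$ read $0=\rho(\A_s)\cdot0$. Thus $\rho(\A_s)\le\rho(\mathcal B)$. By the entrywise monotonicity of the spectral radius for nonnegative tensors (the non-strict form of Lemma~\ref{Khan-rho}, which holds without irreducibility via the Collatz--Wielandt characterization), $\rho(\mathcal B)\le\rho(\A)$. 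Therefore $\rho(\A_s)\le\rho(\A)$, and the block from the previous paragraph attains $\rho(\A)$.

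The main obstacle is the gluing step in the structural part, namely that the diagonal blocks stay weakly irreducible. A naive approach via strong components of the associated digraph can fail, because mixed entries produce arcs that are invisible inside a single block. This is why I recurse on genuine principal subtensors. A second point needing care is the monotonicity $\rho(\mathcal B)\le\rho(\A)$ when $\A$ is weakly reducible; for that I would cite the standard nonnegative-tensor result rather than the strict Lemma~\ref{Khan-rho}.
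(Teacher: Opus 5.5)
The gap is in the second bullet of your gluing step (the paper gives no proof of its own; it only cites Shao--Shan--Zhang). For $i_1\in V_s\subseteq J$ you say that an index lying in $J$ is handled by the triangularity of $\A[J]$. That property only constrains entries all of whose indices lie in $J$. A mixed entry with $i_1\in V_s\subseteq J$, some index in $I$, and some index in a later block $V_q\subseteq J$ ($q>s$) is not an entry of $\A[J]$. Neither closedness of $I$ nor the induction hypothesis forces it to vanish.

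This is the same phenomenon you flagged for strong components: recursion makes the diagonal blocks weakly irreducible, but it can destroy triangularity. It cannot be patched, because for general tensors, even nonnegative ones, the first assertion is false. Take $m=n=3$ with nonzero entries exactly $a_{111}=a_{213}=a_{312}=1$.
\begin{itemize}
\item The only nonempty proper closed set is $\{1\}$, so $\A$ is weakly reducible.
\item Your recursion outputs the blocks $\{1\},\{2\},\{3\}$. Whichever of $\{2\},\{3\}$ comes first, $a_{213}$ or $a_{312}$ violates triangularity.
\item In fact no admissible form exists. Write $b(i)$ for the index of the block containing $i$ in any lower triangular block form. Then $a_{213}\neq0$ gives $b(3)\le b(2)$, and $a_{312}\neq0$ gives $b(2)\le b(3)$, so $2$ and $3$ share a block.
\item That block is either $\{1,2,3\}$, whose diagonal block is $\A$ itself, or $\{2,3\}$, whose diagonal block is the zero tensor of dimension $2$. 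Both are weakly reducible.
\end{itemize}

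What your recursion does prove is a weaker property. If $a_{i_1\cdots i_m}\neq0$, $i_1\in V_s$, and no $i_j$ lies in $V_1\cup\cdots\cup V_{s-1}$, then all $i_j$ lie in $V_s$. This follows by the same induction, since entries meeting $I$ impose no condition. Your eigenvector argument uses only this property: terms meeting earlier blocks vanish because $x$ is zero there. So $\rho(\A)=\max_s\rho(\A[V_s])$ does hold for nonnegative $\A$ with your blocks.

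The case the paper actually uses is fine. There $\A=\A_t(G)$ is symmetric; see the remark after the lemma and Lemma~\ref{t-clq-conn}. If $I$ is closed and $\A$ is symmetric, permuting an index of $I$ to the front shows that every entry meeting both $I$ and $J$ is zero. Hence $\A=\A[I]\oplus\A[J]$, the recursion yields a block diagonal form, and your gluing and spectral argument go through verbatim.

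So you must either add symmetry as a hypothesis, or replace ``lower triangular'' by the weaker property above.
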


If, in addition, the tensor $\A$ in Lemma \ref{Shao-diag} is symmetric, then $\A$ is permutationally similar to a diagonal block tensor.

Liu and Bu \cite{Liu2} introduced the notions of $t$-clique walk and $t$-clique connectedness.
A \emph{$t$-clique walk} is a sequence of $t$-cliques such that every two consecutive $t$-cliques share at least one vertex.
The graph $G$ is said to be \emph{$t$-clique connected} if every pair of vertices $u, v \in V(G)$ can be connected by a $t$-clique walk.
The following lemma connects this notion with tensor irreducibility.

\begin{lem}[\cite{Liu2})]\label{Liu-conn}
For a graph $G$, its $t$-clique tensor $\A_t(G)$ is weakly irreducible if and only if $G$ is
$t$-clique connected.
\end{lem}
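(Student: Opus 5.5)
We prove the two implications directly from the definition of weak reducibility, using the fact that the nonzero entries $a_{i_1\cdots i_t}$ are exactly those whose index set $\{i_1,\ldots,i_t\}$ is a $t$-clique of $G$ (all indices then distinct). Throughout we read the connectedness condition as in the intended setting: every vertex lies in some $t$-clique, and any two vertices are joined by a $t$-clique walk. For $u=v$ this means a walk consisting of a single clique containing $u$.

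\emph{Connected $\Rightarrow$ weakly irreducible.} Suppose $\A_t(G)$ is weakly reducible, witnessed by a nonempty proper subset $I\subseteq[n]$. Pick $u\in I$ and $v\notin I$, and take a $t$-clique walk $Q_1,\ldots,Q_s$ with $u\in Q_1$ and $v\in Q_s$.

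We show by induction that every $Q_j\subseteq I$.
\begin{itemize}[label={}]
\item For $Q_1$: order its vertices with $u$ first. The entry $a_{u\,i_2\cdots i_t}$ is nonzero, so weak reducibility forces the remaining vertices into $I$. Hence $Q_1\subseteq I$.
\item For the step: if $Q_j\subseteq I$, then $Q_{j+1}$ shares a vertex $w$ with $Q_j$, so $w\in I$. Applying the same argument with $w$ as first index gives $Q_{j+1}\subseteq I$.
\end{itemize}
Thus $v\in Q_s\subseteq I$, a contradiction.

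\emph{Weakly irreducible $\Rightarrow$ connected.} Assume $G$ is not $t$-clique connected.
\begin{itemize}[label={}]
\item First, if some vertex $u$ lies in no $t$-clique, take $I=\{u\}$. Every entry with first index $u$ is zero, so the condition holds vacuously. For $n\ge2$, $I$ is a proper subset, so the tensor is weakly reducible. (For $n=1$ and $t\ge2$ the statement is degenerate and we adopt the convention that both sides hold.)
\item Otherwise, define $u\sim v$ if $u$ and $v$ are joined by a $t$-clique walk. This is an equivalence relation: reflexivity uses a clique containing $u$, symmetry comes from reversing the walk, and transitivity from concatenating walks at a common clique containing the middle vertex. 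By assumption there are at least two classes; let $I$ be one of them, a nonempty proper subset.
\item If $a_{i_1\cdots i_t}\ne0$ with $i_1\in I$, then $Q=\{i_1,\ldots,i_t\}$ is a $t$-clique, and each $i_j$ is joined to $i_1$ by the one-clique walk $Q$. So all $i_j\in I$, and $\{i_2,\ldots,i_t\}\subseteq I$. This makes $\A_t(G)$ weakly reducible, a contradiction.
\end{itemize}

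There is no real obstacle. The only delicate point is the bookkeeping above: vertices in no $t$-clique, and the fact that the nonzero entries are precisely the ordered $t$-cliques.
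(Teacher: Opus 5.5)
Your proof is correct. Note that the paper does not prove this lemma at all: it quotes it from Liu and Bu. The only supporting remark in the paper is that $\A_t(G)$ is the adjacency tensor of the $t$-clique hypergraph $H_t(G)$, so that $t$-clique connectedness of $G$ is just connectedness of $H_t(G)$. This equivalence is used explicitly at the start of the proof of Lemma~\ref{t-clq-conn}.

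The implicit route is therefore to invoke the standard fact that the adjacency tensor of a uniform hypergraph is weakly irreducible if and only if the hypergraph is connected. What you wrote is a direct verification of that fact from the Yang--Yang definition, specialized to $H_t(G)$:
\begin{itemize}
\item one direction propagates membership in $I$ along a clique walk;
\item the other takes $I$ to be a walk-equivalence class, or a single vertex lying in no $t$-clique (an isolated vertex of $H_t(G)$).
\end{itemize}
The citation route is shorter and reuses known hypergraph machinery. Yours is self-contained, and it makes explicit the one point hidden by the phrase ``every pair of vertices'': vertices covered by no $t$-clique.

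On that point, you can read ``pair'' as two \emph{distinct} vertices, as the paper's wording suggests. Then $n=1$ needs no special convention: the tensor is weakly irreducible because no nonempty proper $I$ exists, and connectedness holds vacuously. For $n\ge 2$, this reading already forces every vertex into some $t$-clique, which is exactly the reading your argument uses.
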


The following lemma connects $t$-clique spectral radius with the number of $t$-cliques.

\begin{lem}[\cite{Liu2})]\label{Spec-Clq}
Let $G$ be a graph on $n$ vertices. Then
$$\rho_{t}(G) \geq \frac{t}{n}c_{t}(G).$$
Furthermore, equality holds if every vertex of $G$ is contained in the same number of $t$-cliques.
\end{lem}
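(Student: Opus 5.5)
We prove the inequality in Lemma \ref{Spec-Clq} directly from the variational characterization of $\rho_t(G)$ that follows Definition \ref{t-cliq} and Lemma \ref{QiSpec}, namely
$$\rho_{t}(G)=\max \bigg\{t\sum_{\{i_{1}, \ldots,i_{t}\}\in C_{t}(G)}x_{i_{1}}\cdots x_{i_{t}}: x \in \mathbb{R}_{+}^n, \|x\|_t = 1 \bigg\}.$$
Take the uniform test vector $x=(n^{-1/t},\ldots,n^{-1/t})$. It is nonnegative and satisfies $\|x\|_t^t=n\cdot n^{-1}=1$. Each $t$-clique then contributes $x_{i_1}\cdots x_{i_t}=(n^{-1/t})^t=1/n$ to the sum. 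The objective therefore equals $t\,c_t(G)/n$, and since $\rho_t(G)$ is the maximum over all admissible vectors, $\rho_t(G)\ge \frac{t}{n}c_t(G)$.

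For the equality statement, suppose every vertex lies in exactly $d$ $t$-cliques. Counting vertex–clique incidences gives $nd=t\,c_t(G)$, so $d=\frac{t}{n}c_t(G)$. We check that the all-ones vector $\mathbf{1}$ is an eigenvector of $\A_t(G)$. For each $j$,
$$(\A_t(G)\mathbf{1}^{t-1})_j=\sum_{i_2,\ldots,i_t}a_{j i_2\cdots i_t}.$$
Each $t$-clique containing $j$ appears in this sum as $(t-1)!$ ordered tuples $(i_2,\ldots,i_t)$, and each such tuple carries weight $\frac{1}{(t-1)!}$. The sum is therefore $d=d\cdot 1^{t-1}$, so $d$ is an eigenvalue with the positive eigenvector $\mathbf{1}$.

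It remains to show that this eigenvalue is the spectral radius, i.e.\ $\rho_t(G)\le d$. We use the bound for nonnegative tensors that $\rho(\A)$ is at most the maximum row sum $\max_j\sum_{i_2,\ldots,i_t}a_{j i_2\cdots i_t}$. Alternatively, for a nonnegative vector $x$ with $\|x\|_t=1$, apply AM–GM to each clique: $x_{i_1}\cdots x_{i_t}\le \frac1t\sum_{s} x_{i_s}^t$. Summing over cliques gives
$$t\sum_{C_t(G)}x_{i_1}\cdots x_{i_t}\le \sum_j d\,x_j^t=d.$$
This second route is self-contained, so we adopt it. Hence $\rho_t(G)\le d=\frac{t}{n}c_t(G)$, and combined with the first part, equality holds.

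The only step requiring any care is this upper bound in the equality case. The AM–GM estimate handles it cleanly, and everything else is direct substitution.
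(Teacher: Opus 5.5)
Your proof is correct. The paper does not prove this lemma; it imports it from Liu and Bu \cite{Liu2}, so there is no in-paper argument to compare against. Your version is self-contained and needs only Qi's variational formula (Lemma \ref{QiSpec}), which the paper already states. The uniform vector $n^{-1/t}\mathbf{1}$ gives the lower bound. In the regular case, the AM--GM estimate $t\sum_{e\in C_t(G)}x^e\le\sum_{v}d_v x_v^t$, where $d_v$ is the number of $t$-cliques containing $v$, gives $\rho_t(G)\le d=\frac{t}{n}c_t(G)$. Together these settle equality.

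The paragraph verifying that $\mathbf{1}$ is an eigenvector with eigenvalue $d$ is superfluous. On its own it only yields $\rho_t(G)\ge d$, which you already have. Turning it into the upper bound would need an outside fact, such as the row-sum bound or the fact that a positive eigenvector of a nonnegative tensor belongs to its spectral radius. You were right to avoid that and close with AM--GM instead.

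The same setup also gives the converse, which the lemma does not claim. If equality holds, the uniform vector is a maximizer with all entries positive. By Lagrange multipliers it then satisfies $\mathcal{A}_t(G)\mathbf{1}^{t-1}=\lambda\mathbf{1}$, which forces all $d_v$ to be equal.
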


Liu and Bu \cite{Liu} established the following spectral analogue of the Erd\H{o}s-Simonovits stability theorem for $t$-clique tensors, where $\chi(G)$ denotes the chromatic number of a graph $G$.

\begin{lem}[\cite{Liu})]\label{Stability}
Let $H$ be a graph with $\chi(H) = r+1 > t \geq 2$. For every $\varepsilon > 0$, there exist $\delta > 0$ and $n_{0}$ such that if $G$ is an $H$-free graph on $n \geq n_{0}$ vertices and
$$\rho_t(G) \geq \left( \binom{r-1}{t-1} \left( \frac{1}{r} \right)^{t-1} - \delta \right) n^{t-1},$$
then $G$ can be obtained from $T_r(n)$ by adding and deleting at most $\varepsilon n^2$ edges.
\end{lem}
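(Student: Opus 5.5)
The plan is to reduce the spectral hypothesis to a statement about clique \emph{counts}, and then invoke the known counting stability for $K_t$ in $K_{r+1}$-free graphs (the generalized Erd\H{o}s--Simonovits stability of Ma--Qiu type: a $K_{r+1}$-free $n$-vertex graph with at least $c_t(T_r(n))-o(n^t)$ copies of $K_t$ is $o(n^2)$-close to $T_r(n)$). Fix a nonnegative unit vector $x$ ($\|x\|_t=1$) attaining
$$\rho_t(G)=t\sum_{\{i_1,\ldots,i_t\}\in C_t(G)}x_{i_1}\cdots x_{i_t}.$$
Throughout I will use the power-mean bound $\sum_i x_i\le n^{1-1/t}$.

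\emph{Step 1 (passing to a $K_{r+1}$-free graph).} Since $\chi(H)=r+1$, the graph removal lemma lets us delete a set $E'$ of at most $\eta n^2$ edges from the $H$-free graph $G$ to obtain a $K_{r+1}$-free graph $G'$, where $\eta\to 0$ as $n\to\infty$. Every $t$-clique destroyed contains some edge $uv\in E'$. The contribution of the cliques through a fixed edge $uv$ is at most
$$x_ux_v\Big(\sum_i x_i\Big)^{t-2}\big/(t-2)!.$$
By H\"older,
$$\sum_{uv\in E'}x_ux_v\le |E'|^{1-2/t}\Big(\sum_{uv}x_u^{t/2}x_v^{t/2}\Big)^{2/t}\le |E'|^{1-2/t},$$
because $\sum_{uv}x_u^{t/2}x_v^{t/2}\le(\sum_i x_i^{t/2})^2\le\ldots$ is controlled after one more power-mean step. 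Combining these gives a loss of $O(\eta^{1-2/t})\,n^{2-4/t}\,n^{(t-2)(1-1/t)}=O(\eta^{1-2/t})\,n^{t-1}$, up to the bookkeeping of exponents, which I will check carefully. Hence
$$t\sum_{C_t(G')}\prod x\ \ge\ \Big(\tbinom{r-1}{t-1}r^{1-t}-\delta-o(1)\Big)n^{t-1}.$$

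\emph{Step 2 (weighted Zykov bound with slack).} For a $K_{r+1}$-free graph and any nonnegative weights, Zykov symmetrization (a weighted Motzkin--Straus argument) gives
$$\sum_{C_t(G')}\prod x\ \le\ \binom{r}{t}\Big(\frac{\sum_i x_i}{r}\Big)^t\ \le\ \binom rt r^{-t}n^{t-1}.$$
Multiplying by $t$ yields exactly $\binom{r-1}{t-1}r^{1-t}n^{t-1}$. Thus our vector is nearly extremal in both inequalities. Near-equality in the power-mean step forces $\sum_i x_i\ge(1-\delta')n^{1-1/t}$. Together with $\|x\|_t=1$, this shows that all but $o(n)$ vertices satisfy $x_i=(1\pm\delta'')n^{-1/t}$. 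Let $S$ be this set of good vertices.

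\emph{Step 3 (from weights to counts).} On $S$ the weights are essentially uniform, and the cliques meeting $V\setminus S$ contribute $o(n^{t-1})$ by the same H\"older estimate used in Step 1. Therefore
$$c_t(G'[S])\ \ge\ (1-o(1))\,n^{t/t}\cdot\frac{1}{t}\binom{r-1}{t-1}r^{1-t}n^{t-1}=c_t(T_r(n))-o(n^t).$$
The counting stability theorem then says that $G'[S]$ is $o(n^2)$-close to a Tur\'an graph. Adding back the $o(n)$ vertices outside $S$ and the edges of $E'$ costs only $o(n^2)$ further edits, so choosing $\delta$ small and $n_0$ large gives the bound $\varepsilon n^2$.

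The main obstacle is Step 3, together with the quantitative control of small and irregular weights. The vertices of $V\setminus S$ may carry large individual $x_i$, and I must make sure they cannot carry a non-negligible share of the form. The first thing I would try is a dyadic decomposition of the weights combined with H\"older, showing that cliques containing a vertex of weight outside $[(1-\delta'')n^{-1/t},(1+\delta'')n^{-1/t}]$ contribute only $o(n^{t-1})$. If that fails, I would fall back on proving a stability version of the weighted Zykov inequality directly on the weighted graph.
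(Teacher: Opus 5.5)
The paper gives no proof of this lemma. It is quoted from Liu and Bu \cite{Liu} and used as a black box, so there is no internal argument to compare yours against, and it has to be judged on its own. On that basis your architecture is sound and gives a genuine reduction. You pass to a $K_{r+1}$-free $G'$, apply the weighted Zykov (S\'os--Straus) inequality $\sum_{K\in C_t(G')}x^K\le\binom rt(\sum_i x_i/r)^t$, and use near-equality in the power-mean step to force near-uniform weights. Finally you apply the Ma--Qiu counting stability for $K_t$ in $K_{r+1}$-free graphs. The gain is that the tensor statement becomes a formal corollary of the counting statement. The cost is importing the removal lemma and the counting stability theorem. Two points need repair, and the obstacle you single out is not a real one.

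\textbf{Step 1.} The display ending in $\le|E'|^{1-2/t}$ is false. For uniform $x$ and $|E'|=\eta n^2$, the left side is $\eta n^{2-2/t}$, which exceeds $\eta^{1-2/t}n^{2-4/t}$. H\"older actually gives $|E'|^{1-2/t}(n/2)^{2/t}$, hence a loss of $O(\eta^{1-2/t})n^{t-1}$. That decays for $t\ge 3$ but not at all for $t=2$, a case the lemma covers. Use Cauchy--Schwarz instead:
$$\sum_{uv\in E'}x_ux_v\le|E'|^{1/2}\Bigl(\sum_{uv\in E'}x_u^2x_v^2\Bigr)^{1/2}\le\sqrt{|E'|/2}\,\|x\|_2^2\le\sqrt{\eta/2}\,n^{2-2/t}.$$
Multiplying by $(\sum_i x_i)^{t-2}/(t-2)!\le n^{(t-1)(t-2)/t}/(t-2)!$ gives a loss of $O(\sqrt{\eta})\,n^{t-1}$ for every $t\ge 2$. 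You should also state why removal applies at all. Since $H\subseteq K_{r+1}(|V(H)|)$, supersaturation shows that an $H$-free graph has $o(n^{r+1})$ copies of $K_{r+1}$, and only then does the removal lemma delete $o(n^2)$ edges.

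\textbf{Step 3.} No dyadic decomposition is needed. Because $\sum_v x_v^t=1$, H\"older gives $\sum_{v\notin S}x_v\le|V\setminus S|^{1-1/t}\le(\gamma n)^{1-1/t}$. So the cliques meeting $V\setminus S$ carry total weight at most
$$(\gamma n)^{1-1/t}\Bigl(\sum_i x_i\Bigr)^{t-1}\big/(t-1)!\le\gamma^{1-1/t}n^{t-1}/(t-1)!.$$
Large individual entries are harmless, since the $\ell_t$-normalization caps their total $\ell_1$-mass.

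Your claim that all but $\gamma n$ vertices are good also needs its one-line proof. Put $y_i=n^{1/t}x_i$ and $\phi(y)=y^t-1-t(y-1)\ge 0$. Then $\frac1n\sum_i\phi(y_i)=t\bigl(1-\frac1n\sum_i y_i\bigr)\le t\delta'$, while $\phi\ge c(\delta'')>0$ off $[1-\delta'',1+\delta'']$.

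With these in place, $c_t(G'[S])\ge(1+\delta'')^{-t}\,n\sum_{K\in C_t(G'[S])}x^K\ge c_t(T_r(|S|))-\varepsilon' n^t$. The counting stability theorem then applies to $G'[S]$. Re-inserting $V\setminus S$ and $E'$, with the extra vertices distributed to keep the parts balanced, costs $O((\gamma+\eta)n^2)$ further edits, as you say.
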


If a graph $G$ is structurally close to a Tur\'an graph, then it inherits several useful structural properties.
Suppose that $V(G)$ has a partition $V(G)=V_{1}\cup\cdots\cup V_{r}$.
Let $e(V_i)$ denote the number of edges in the induced subgraph $G[V_i]$, and for $i \ne j$, let $e(V_i,V_j)$ denote the number of edges of $G$ with one endpoint in $V_i$ and the other in $V_j$.
The following result is due to Ni, Wang, and Kang \cite{Ni}.

\begin{lem}[\cite{Ni}]\label{Ni-Turan}
Let $G$ be a graph obtained from $T_r(n)$ by adding and deleting at most $\varepsilon n^2$ edges.
Then there exists a partition $V(G)=V_{1}\cup\cdots\cup V_{r}$ such that the number of crossing edges of $G$ (i.e. $\sum_{1\le i<j\le r}e(V_{i},V_{j})$) is maximized and $\sum_{i=1}^{r}e(V_{i})\leq\varepsilon n^{2}$.
Moreover, for such a partition, the following hold:

\begin{enumerate}
\item[\rm (a)] For each $i\in[r]$, $\tfrac{n}{r}-3\sqrt{\varepsilon}n<|V_{i}|<\tfrac{n}{r}+3\sqrt{\varepsilon}n.$

\item[\rm (b)] Let $W:=\cup_{i=1}^{r}\{v\in V_{i}:d_{V_{i}}(v)\geqslant 2\theta n\},$
where  $\theta$ is a sufficiently small constants with $\sqrt{\varepsilon} \le  \theta<\frac{1}{20kr^{4}(r+1)}$.
Then, $|W|\leq \theta n$.

\item[\rm (c)] Let $L:=\{v\in V(G):d(v)\leqslant(1-\tfrac{1}{r}-\varepsilon_{1})n\}$, where
$\varepsilon_{1}$ is a sufficiently small constant with $\sqrt{\varepsilon}<\varepsilon_{1} \ll \theta$.
Then, $|L|\le \varepsilon_{2}n$, where $\varepsilon_{2}\ll\varepsilon_{1}$ is a sufficiently small constant
satisfying $\varepsilon-\varepsilon_{1}\varepsilon_{2}+\tfrac{r-1}{2r}\varepsilon_{2}^{2}<0$.

\item[\rm (d)] For each $i\in[r]$, there exists an independent set $I_{i}\subseteq V_{i}\setminus (W\cup L)$ such that
$|I_{i}|\geq |V_{i}\setminus (W\cup L)|-2(k-1).$

\item[\rm (e)] $|W\setminus L| \le k-1$.
\end{enumerate}
\end{lem}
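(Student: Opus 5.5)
The statement just above is Lemma \ref{Ni-Turan}, which is quoted from Ni, Wang and Kang; here is how I would prove it from scratch. The idea is to start from a max-cut $r$-partition and use the near-Tur\'an structure of $G$ together with, for (d) and (e), the $kK_{r+1}$-freeness of $G$. That hypothesis is implicit in the lemma and is needed for those two parts. Fix a partition $V_1\cup\cdots\cup V_r$ maximizing the number of crossing edges. The partition inherited from $T_r(n)$ already has at most $\varepsilon n^2$ non-crossing edges, since only added edges can lie inside parts. Maximality therefore gives $\sum_i e(V_i)\le\varepsilon n^2$.

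\textbf{Part (a).} The number of crossing edges is at least $e(T_r(n))-\varepsilon n^2$. On the other hand, it is at most $\sum_{i<j}|V_i||V_j| = \frac12\big(n^2-\sum_i|V_i|^2\big)$. If some $|V_i|$ deviated from $n/r$ by $3\sqrt\varepsilon n$, convexity would give $\sum_i|V_i|^2\ge n^2/r+9\varepsilon n^2$, up to a lower-order factor depending on $r$. The crossing count would then be at most $e(T_r(n))-c\,\varepsilon n^2$ with $c>1$, a contradiction.

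\textbf{Part (b).} Each vertex of $W$ has at least $2\theta n$ neighbours in its own part. Summing over $W$ gives $2\theta n|W|\le 2\sum_i e(V_i)\le 2\varepsilon n^2$. Hence $|W|\le \varepsilon n/\theta\le\theta n$, using $\theta\ge\sqrt\varepsilon$.

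\textbf{Part (c).} Suppose $|L|>\varepsilon_2 n$ and take $L'\subseteq L$ with $|L'|=\varepsilon_2 n$. Deleting $L'$ removes at least $\varepsilon_2 n(1-\frac1r-\varepsilon_1)n-\binom{\varepsilon_2 n}{2}$ edges. The remaining graph $G-L'$ is still $kK_{r+1}$-free on $(1-\varepsilon_2)n$ vertices. By Theorem \ref{edge-kK} it has at most about $e(T_r((1-\varepsilon_2)n))+O(n)$ edges. Comparing with $e(G)\ge e(T_r(n))-\varepsilon n^2$ gives an inequality that contradicts $\varepsilon-\varepsilon_1\varepsilon_2+\frac{r-1}{2r}\varepsilon_2^2<0$. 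The hypothesis on $\varepsilon_2$ is exactly tailored to make this comparison fail.

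\textbf{Parts (d) and (e).} These carry the real content. The key tool is an embedding claim. A vertex $v\in V_i\setminus L$ has degree greater than $(1-\frac1r-\varepsilon_1)n$. By max-cut it has at most $d(v)/r$ neighbours in its own part, so it misses only $O(\varepsilon_1 n)$ vertices in each other part $V_j$. So one can greedily build many vertex-disjoint copies of $K_{r+1}$ from any $r+1$ vertices that are pairwise adjacent or "almost complete" to each other. Concretely:
\begin{itemize}
\item[(i)] If $u\in V_i\setminus L$ has $\theta n$ neighbours in $V_i$ that are outside $W\cup L$, then $u$ together with such a neighbour $w$ extends to a $K_{r+1}$. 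One picks common neighbours in each other part, which exist since all the missing sets have size $O(\varepsilon_1 n+\theta n)\ll n/r$.
\item[(ii)] The same argument works for $u\in W\setminus L$, which has at least $2\theta n$ inside-neighbours, of which at most $|W|+|L|$ are bad.
\end{itemize}

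For (e): if $|W\setminus L|\ge k$, pick $k$ such vertices and build $k$ disjoint $K_{r+1}$'s one at a time. Each step uses only $r+1$ vertices, so the linear-size common neighbourhoods are never exhausted. This contradicts $kK_{r+1}$-freeness.

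For (d): consider the graph $G[V_i\setminus(W\cup L)]$. Every vertex there has fewer than $2\theta n$ inside-neighbours, and any edge $uw$ inside it extends to a $K_{r+1}$ as in (i). If this graph had a matching of size $k$, we would obtain $k$ disjoint $K_{r+1}$'s, extending the $k$ edges greedily one after another. So its matching number is at most $k-1$. Deleting both endpoints of a maximal matching, at most $2(k-1)$ vertices, leaves an independent set $I_i$, which is exactly (d).

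The main obstacle is the bookkeeping in (c), namely getting Theorem \ref{edge-kK}'s bound and the constants to produce exactly the stated inequality. The greedy embedding in (d) and (e) is routine once the codegree estimates are written down.
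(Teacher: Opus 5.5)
The paper gives no proof of this lemma; it is quoted from Ni, Wang and Kang, where $G$ is the spectral extremal graph and hence $kK_{r+1}$-free, so you are right to add that hypothesis. Your overall route is the standard one for such statements: a max-cut partition, double counting for (a) and (b), deleting $L'$ and invoking Theorem~\ref{edge-kK} for (c), and greedy clique embedding for (d) and (e). Parts (a), (b) and (d) go through in substance. Two steps, however, fail as written, and the delicate one is (e), not (c).

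\textbf{(c).} Your inequality is reversed. Vertices of $L$ have degree \emph{at most} $(1-\frac1r-\varepsilon_1)n$, so your lower bound on the number of edges removed with $L'$ is false. In any case, a lower bound on removed edges only bounds $e(G-L')$ from above, which cannot contradict Theorem~\ref{edge-kK}. The correct step is that deleting $L'$ removes at most $\sum_{v\in L'}d(v)\le\varepsilon_2(1-\frac1r-\varepsilon_1)n^2$ edges. This gives $e(G-L')-e(T_r((1-\varepsilon_2)n))\ge(\varepsilon_1\varepsilon_2-\varepsilon-\frac{r-1}{2r}\varepsilon_2^2)n^2-O(1)$, which exceeds the $O(n)$ slack in Theorem~\ref{edge-kK}. (Alternatively, (c) needs no forbidden subgraph at all. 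Each $v\in L'$ lies on at least $\varepsilon_1 n-1$ deleted edges, and at most $\frac{r-1}{2r}|L'|^2$ deleted edges lie inside $L'$. The same constant condition then forces more than $\varepsilon n^2$ deletions.)

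\textbf{(d), (e).} Your embedding claim is derived incorrectly, and it fails exactly for the vertices that (e) concerns. From $d_{V_i}(v)\le d(v)/r$ you only get $\sum_{j\ne i}d_{V_j}(v)\ge(1-\frac1r)d(v)$. That still lets $v$ miss about $(1-\frac1r)\frac{n}{r}$ vertices outside $V_i$, not $O(\varepsilon_1 n)$.

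For $v\notin W\cup L$, the right bound uses $d_{V_i}(v)<2\theta n$ and gives at most $(\varepsilon_1+2\theta+3\sqrt{\varepsilon})n$ non-neighbours outside $V_i$; this is all (d) needs.

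A vertex $u\in W\setminus L$, however, can miss a constant fraction of another part. For example, in $T_r(n)$ reroute $n/(2r)$ of $u$'s edges from $V_2$ into $V_1$. The partition is still a maximum cut, $u\in W\setminus L$, and $u$ misses $n/(2r)$ vertices of $V_2$. So ``the same argument works'' in (ii) is unjustified. The bound $d_{V_j}(u)\ge 2\theta n$ that the max cut gives directly is too weak, since a single good vertex may miss $(\varepsilon_1+2\theta+3\sqrt{\varepsilon})n>2\theta n$ vertices of $V_j$.

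The missing step combines two facts for $j\ne i$:
\begin{itemize}
\item local optimality of the cut, $d_{V_j}(u)\ge d_{V_i}(u)$;
\item the count $|V_j\setminus N(u)|\le n-d(u)-(|V_i|-d_{V_i}(u))<d_{V_i}(u)+(\varepsilon_1+3\sqrt{\varepsilon})n$.
\end{itemize}
Adding these gives $d_{V_j}(u)>\frac12\bigl(|V_j|-(\varepsilon_1+3\sqrt{\varepsilon})n\bigr)\approx\frac{n}{2r}$ for every $j\ne i$.

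Now pick a good neighbour $w\in N_{V_i}(u)\setminus(W\cup L)$ first, then choose the remaining vertices greedily among good vertices of the sets $N_{V_j}(u)$. Each chosen vertex other than $u$ rules out only $O(\theta n)\ll\frac{n}{2r}$ candidates, so a $K_{r+1}$ through $u$ exists. Repeating this $k$ times, while discarding the $O(kr)$ used vertices, contradicts $kK_{r+1}$-freeness whenever $|W\setminus L|\ge k$.
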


Next, consider a simple graph $G$ with matching number $\mu(G)$ and maximum degree $\Delta (G)$.
For fixed integers $m$ and $\Delta$, define
$$f(m,\Delta) = \max\{ e(G) : \mu(G) \leq m,\ \Delta(G) \leq \Delta \}.$$
The following result was established by Chv\'atal and Hanson \cite{Chv}.

\begin{lem}[\cite{Chv}]\label{chv-md}
For all integers $m \geq 1$ and $\Delta \geq 1$,
$$f(m,\Delta) = \Delta m + \left\lfloor \frac{\Delta}{2} \right\rfloor
\left\lfloor \frac{m}{\lceil \Delta/2 \rceil} \right\rfloor
\leq (\Delta +1) m.$$
\end{lem}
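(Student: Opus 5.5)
We prove only the inequality $f(m,\Delta)\le(\Delta+1)m$ in full; this is the only part invoked later in the paper. The exact formula is the theorem of Chv\'atal and Hanson \cite{Chv}, and at the end we only indicate how it refines the same argument.

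\emph{Decomposition.} Let $G$ satisfy $\mu(G)\le m$ and $\Delta(G)\le\Delta$; we may assume $\mu(G)=m$, since the bound is monotone in $m$. By the Gallai--Edmonds structure theorem, there is a set $S\subseteq V(G)$ with the following properties:
\begin{itemize}
\item each component $C_1,\dots,C_p$ of $G-S$ is either factor-critical (odd order) or has a perfect matching (even order);
\item a maximum matching of $G$ consists of a perfect matching of each even component, a near-perfect matching of each odd component, and edges matching all of $S$ into distinct odd components.
\end{itemize}
Writing $a_i=\lfloor |C_i|/2\rfloor$, this gives
$$m=\mu(G)=|S|+\sum_{i=1}^p a_i .$$

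\emph{Edge count.} Every edge of $G$ either meets $S$ or lies inside some $C_i$. Since $\Delta(G)\le\Delta$, at most $\Delta|S|$ edges meet $S$. Hence
$$e(G)\le \Delta|S|+\sum_{i=1}^p e(C_i).$$

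\emph{Key local estimate.} We claim $e(C_i)\le(\Delta+1)a_i$ for every $i$. If $a_i=0$, then $C_i$ is a single vertex and has no edges. Otherwise $|C_i|\le 2a_i+1$, and there are two cases.
\begin{itemize}
\item If $a_i\ge\Delta/2$, the degree bound gives
$$e(C_i)\le \frac{\Delta(2a_i+1)}{2}=\Delta a_i+\frac{\Delta}{2}\le(\Delta+1)a_i .$$
\item If $a_i<\Delta/2$, the trivial bound gives
$$e(C_i)\le\binom{2a_i+1}{2}=a_i(2a_i+1)\le a_i(\Delta+1),$$
because $2a_i<\Delta$ forces $2a_i+1\le\Delta+1$.
\end{itemize}

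\emph{Conclusion.} Summing the local estimates,
$$e(G)\le \Delta|S|+(\Delta+1)\sum_{i=1}^p a_i\le (\Delta+1)m .$$

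\emph{Main obstacle: the exact value.} Obtaining the equality $f(m,\Delta)=\Delta m+\lfloor\Delta/2\rfloor\lfloor m/\lceil\Delta/2\rceil\rfloor$ requires optimizing the same decomposition more carefully. One must balance three contributions:
\begin{itemize}
\item the vertices of $S$, each of which contributes $\Delta$ edges per unit of matching;
\item the odd components with $a_i=\lceil\Delta/2\rceil$, which are near-regular graphs of order $2\lceil\Delta/2\rceil+1$ and contribute $\Delta a_i+\lfloor\Delta/2\rfloor$ edges;
\item the integrality constraints on the $a_i$.
\end{itemize}
The matching lower-bound construction takes as many such near-regular odd blocks as possible and realizes the remainder as stars. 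This bookkeeping is carried out in \cite{Chv}, and we rely on that reference for the exact formula, since only the upper bound $(\Delta+1)m$ is needed in what follows.
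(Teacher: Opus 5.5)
Your proof of $f(m,\Delta)\le(\Delta+1)m$ is correct, but it takes a different route from the paper. The paper proves nothing here: it imports the exact Chv\'atal--Hanson value from \cite{Chv}, and the inequality is then the one-line consequence $\lfloor\Delta/2\rfloor\lfloor m/\lceil\Delta/2\rceil\rfloor\le m\lfloor\Delta/2\rfloor/\lceil\Delta/2\rceil\le m$.

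You instead derive the weaker bound directly from a barrier decomposition. At most $\Delta|S|$ edges meet $S$, and each component satisfies $e(C_i)\le(\Delta+1)\lfloor|C_i|/2\rfloor$, by the degree bound when $C_i$ is large and by $\binom{|C_i|}{2}$ when it is small. Your route makes the paper self-contained for exactly what Lemma~\ref{W} uses, namely $e\le(\Delta+1)m$ with $\Delta\le(k-1)(r+1)$ and $m\le k-1-|W|$. It also handles the degenerate cases $m=0$ or $\Delta=0$ automatically. The citation buys the exact extremal value, which the paper never needs.

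Two economies are available. You only use $|S|+\sum_i\lfloor|C_i|/2\rfloor=\mu(G)\le m$, so a Tutte--Berge minimizer $S$ suffices; the factor-critical and matching structure from Gallai--Edmonds is never used. Likewise, the reduction to $\mu(G)=m$ is unnecessary.

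For the equality itself you, like the paper, defer to \cite{Chv}. Your description of the extremal construction ($\lfloor m/\lceil\Delta/2\rceil\rfloor$ near-regular blocks on $2\lceil\Delta/2\rceil+1$ vertices plus stars) is accurate. However, it only yields the lower bound $f(m,\Delta)\ge\Delta m+\lfloor\Delta/2\rfloor\lfloor m/\lceil\Delta/2\rceil\rfloor$. The matching upper bound is the substantive part of Chv\'atal--Hanson and is not recovered by your argument. That is acceptable here, since the statement is a cited result and only the $(\Delta+1)m$ bound is used later.
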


Finally, we recall a simple inequality for finite sets.

\begin{lem}[\cite{Cio}]\label{set-cap}
Let $V_{1},\ldots ,V_{n}$ be finite sets. Then
$$|V_{1}\cap \dots \cap V_{n}|\geq \sum_{i = 1}^{n}|V_{i}| - (n - 1)\left|\textstyle\bigcup_{i = 1}^{n}V_{i}\right|.$$
\end{lem}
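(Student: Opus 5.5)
We prove the inequality of Lemma \ref{set-cap} directly, by induction on the number of sets. Write $U:=\bigcup_{i=1}^{n}V_i$. All sets are finite, so every quantity below is a nonnegative integer.

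For $n=1$ the claim reads $|V_1|\ge |V_1|$, which is trivial. For $n=2$ we use inclusion--exclusion, $|V_1\cap V_2|=|V_1|+|V_2|-|V_1\cup V_2|$, which already gives the claim with equality.

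For the inductive step, assume the inequality holds for any $n-1$ finite sets, and put $A:=V_1\cap\cdots\cap V_{n-1}$. Inclusion--exclusion gives
$$|A\cap V_n|=|A|+|V_n|-|A\cup V_n|\ge |A|+|V_n|-|U|,$$
because $A\cup V_n\subseteq U$. The induction hypothesis, applied to $V_1,\dots,V_{n-1}$, gives
$$|A|\ge \sum_{i=1}^{n-1}|V_i|-(n-2)\Big|\textstyle\bigcup_{i=1}^{n-1}V_i\Big|\ge \sum_{i=1}^{n-1}|V_i|-(n-2)|U|.$$
The second step holds because $\bigcup_{i=1}^{n-1}V_i\subseteq U$ and $n-2\ge 0$, so replacing the smaller union by $U$ only decreases the right-hand side. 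Substituting this into the first bound yields
$$|A\cap V_n|\ge \sum_{i=1}^{n}|V_i|-(n-1)|U|,$$
which is exactly the statement, since $A\cap V_n=V_1\cap\cdots\cap V_n$.

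There is no real obstacle here. The only point needing care is the monotonicity step: the coefficient $(n-2)$ is nonnegative, so enlarging the union from $\bigcup_{i=1}^{n-1}V_i$ to $U$ preserves the direction of the inequality.

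As an alternative one-line argument, complements can be taken inside $U$. Every element of $U\setminus\bigcap_i V_i$ is missing from at least one $V_i$, so
$$|U|-\Big|\bigcap_i V_i\Big|\le \sum_{i=1}^n\big(|U|-|V_i|\big).$$
Rearranging this gives the same inequality.
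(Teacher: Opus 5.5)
Your proof is correct: both the induction (where, as you note, $n-2\ge 0$ lets you enlarge $\bigcup_{i<n}V_i$ to $U$) and the one-line complement argument via $U\setminus\bigcap_i V_i\subseteq\bigcup_i(U\setminus V_i)$ and a union bound are valid.

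The paper does not prove this lemma; it only quotes it from Cioab\u{a}, Feng, Tait and Zhang. So there is no in-paper argument to compare against. Your induction on the number of sets is the standard proof, and the complement version is the cleaner of the two.

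One point worth making explicit: your complement argument works verbatim with $U$ replaced by any finite set $X\supseteq\bigcup_i V_i$. This gives $|\bigcap_i V_i|\ge\sum_i|V_i|-(n-1)|X|$, which is exactly the form the paper uses in \eqref{S-neigh}, where the union of the sets $N_{V_s'}(w)$ is bounded by $|V_s'|$.
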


\section{Proof of Theorem \ref{edge-kK}}
In this section, we assume throughout that $r\geq t\geq2,$ $k\geq 2$, and $n$ is sufficiently large, and that \emph{$G$ is an extremal graph with the maximum $t$-clique spectral radius among all $kK_{r+1}$-free graphs on $n$ vertices}.
Our goal is to show that $G \cong K_{k-1}\vee T_{r}(n-k+1)$.

The proof proceeds as follows.
We first show that $G$ is $t$-clique connected (Lemma \ref{t-clq-conn}), which ensures the existence of a positive eigenvector $x$ of $\A_t(G)$ corresponding to $\rho_t(G)$.
Next, we establish a lower bound for $\rho_t(G)$ (Lemma \ref{lowerbound}).
This allows us to apply the stability theorem (Lemma \ref{Stability}) to deduce that $G$ differs from $T_r(n)$ by at most $\varepsilon n^2$ edges.
Consequently, $V(G)$ admits a partition as in Lemma \ref{Ni-Turan}, together with a set $W$ of high-degree vertices and a set $L$ of low-degree vertices.
By investigating the local structural properties of $G$ (Lemmas \ref{v-neigh} and \ref{v-indeg}) and the lower bound for the second largest entry of the vector $x$ under normalization (Lemma \ref{2nd-ev}), we then show that $L=\emptyset$  (Lemma \ref{LO}) and $W$ is a dominant set of size $k-1$ (Lemmas \ref{W} and \ref{W-dom}).
It follows that $G$ is a join of a complete graph $K_{k-1}$ and an $r$-partite graph. Finally, by exploiting the maximality of $\rho_t(G)$, we complete the proof.

We begin by establishing the $t$-clique connectedness of $G$.

\begin{lem}\label{t-clq-conn}
$G$ is $t$-clique connected. Consequently, the $t$-clique tensor $\A_t(G)$ is weakly irreducible.
\end{lem}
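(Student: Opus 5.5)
Suppose, for contradiction, that $G$ is not $t$-clique connected. Then $\A_t(G)$ is weakly reducible. Since it is symmetric, Lemma \ref{Shao-diag} makes it permutationally similar to a diagonal block tensor with weakly irreducible diagonal blocks. Concretely, partition $V(G)$ into the classes $U_1,\ldots,U_s$ ($s\ge 2$) of the relation ``joined by a $t$-clique walk''. Vertices lying in no $t$-clique form singleton classes whose block is zero. Every $t$-clique lies inside a single class, so
$$\rho_t(G)=\max_i \rho_t(G[U_i]),$$
where $\rho_t(G[U_i])$ is computed on the $t$-uniform clique hypergraph restricted to $U_i$. Fix a class $U_1$ attaining this maximum; it contains at least one $t$-clique, since otherwise $\rho_t(G)=0$. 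That is impossible, because $K_{k-1}\vee T_r(n-k+1)$ is $kK_{r+1}$-free and has positive $t$-clique spectral radius for $t\le r$.

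The idea is to add edges so that $U_1$ is joined to the rest by $t$-cliques, while keeping the graph $kK_{r+1}$-free and strictly increasing $\rho_t$. This contradicts extremality. Pick a vertex $u\in U_1$ lying in a $t$-clique $Q\subseteq U_1$, and a vertex $w\notin U_1$. Let $G'$ be obtained from $G$ by deleting all edges incident to $w$ and then joining $w$ to exactly the $t-1$ vertices of $Q\setminus\{w\}$, or just to $u$ if $t=2$. Then $w$ together with $Q\setminus\{u\}$ forms a new $t$-clique sharing vertices with $Q$.

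Freeness is checked as follows. In $G'$ the vertex $w$ has degree $t-1\le r-1<r$, so $w$ lies in no $K_{r+1}$. All other edges of $G'$ are edges of $G$. Hence any $kK_{r+1}$ in $G'$ avoids $w$ and would already lie in $G$, a contradiction.

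For the spectral comparison, let $H_1$ be the clique hypergraph of $G'$ restricted to $U_1\cup\{w\}$. The edges of $G'$ inside $U_1$ are unchanged, because $w$ was the only vertex modified and $w\notin U_1$. So $\A_t(G[U_1])$, padded with a zero row and column for $w$, is strictly less than $\A_t(H_1)$: it misses the new clique containing $w$. The set $U_1\cup\{w\}$ is $t$-clique connected in $G'$, so $\A_t(H_1)$ is weakly irreducible. Lemma \ref{Khan-rho} then gives $\rho_t(G[U_1])<\rho(\A_t(H_1))\le\rho_t(G')$. The last inequality holds because $H_1$'s tensor is a principal subtensor of $\A_t(G')$, which one sees via the variational formula in Lemma \ref{QiSpec} by restricting the support of $x$. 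Therefore $\rho_t(G')>\rho_t(G)$, contradicting the choice of $G$.

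The step needing most care is making sure the modification does not destroy $U_1$'s clique structure and does not create a $kK_{r+1}$. Isolating $w$ before reattaching it handles both issues cleanly, since degree $t-1<r$ prevents $w$ from entering any $(r+1)$-clique. I also need $w$ to exist, which holds because $s\ge2$, and I need $Q$ to have $t-1$ vertices other than $w$, which holds since $w\notin Q$. Finally, weak irreducibility of $\A_t(G)$ follows from Lemma \ref{Liu-conn}.
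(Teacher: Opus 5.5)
Your proof is correct, and it differs from the paper's in the graph modification. The paper keeps $G$ as a subgraph. It takes a vertex $u$ outside the maximizing component $G_1$, chooses a $t$-clique $C_1$ of $G_1$ maximizing $|N_{C_1}(u)|$, and adds only the $s-1$ edges from $u$ to all but one of its non-neighbours in $C_1$. It then has to rule out a new $K_{r+1}=\{u\}\cup X$ by a two-case analysis: if $X$ meets the complement of $V(G_1)$, this contradicts the component structure, and if $X\subseteq V(G_1)$, it contradicts the maximality of $C_1$.

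You instead strip all edges at $w$ and reattach $w$ to only $t-1$ vertices of a clique $Q\subseteq U_1$. Then $kK_{r+1}$-freeness is immediate from $d_{G'}(w)=t-1<r$. The price is losing the $t$-cliques through $w$, but this costs nothing: those cliques lie outside $U_1$, and $G'[U_1]=G[U_1]$. So the comparison with $\rho_t(G[U_1])=\rho_t(G)$ via Lemma~\ref{Khan-rho} and Lemma~\ref{Liu-conn} goes through as in the paper. You also make explicit the monotonicity step $\rho(\A_t(H_1))\le\rho_t(G')$, which the paper leaves implicit.

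Your route is shorter and avoids both the extremal choice of $C_1$ and the case analysis. The paper's edge-addition route has the mild features that $G\subseteq G'$ and the family of $(r+1)$-cliques is unchanged, but neither is needed here.

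One small slip: you write ``the $t-1$ vertices of $Q\setminus\{w\}$''. Since $w\notin Q$, the set $Q\setminus\{w\}=Q$ has $t$ vertices, so this should read $Q\setminus\{u\}$, consistent with your next sentence. With that reading, the separate $t=2$ clause is unnecessary, though harmless.
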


\begin{proof}
Suppose, to the contrary, that $G$ is not $t$-clique connected; equivalently, the $t$-clique hypergraph $H_G$ is not connected.
First note that that $G$ must contain at least one $t$-clique.
Otherwise, by adding edges to $G$, we obtain a $kK_{r+1}$-free graph $G'$ that contains a $t$-cliques, yielding $\rho_t(G')> \rho_t(G)=0$, a contradiction.
By Lemma \ref{Shao-diag}, there exists a subgraph $G_{1}$ of $G$ such that the associated $t$-clique hypergraph $H_{G_1}$ is a connected component of $H_G$ and  satisfies $\rho_t(G_{1}) = \rho_t(G)$.
Let $u \in V(G)\setminus V(G_1)$, and let $C_{1}$ be a $t$-clique of $G_1$ maximizing the number of neighbors of $u$ in $C_1$.
Let $v_{1},\dots,v_{s}$ be the non-neighbors of $u$ in $C_{1}$.
Since $H_{G_1}$ is a connected component of $H_G$, we have $2 \leq s \leq t$.
Indeed, if $s \le 1$, then $u$ together with its neighbors in $C_1$ would form a $t$-clique intersecting $G_1$, contradicting the choice on $G_1$.

Construct a new graph $G'$ from $G$ by joining $u$ and each of $v_{1}, \dots, v_{s-1}$.
We claim that $G'$ remains $kK_{r+1}$-free.
Otherwise, $G'$ contains a new copy of $K_{r+1}$ involving $u$ and at least one vertex from $\{v_{1}, \dots, v_{s-1}\}$.
Let this new clique be $\{u\} \cup X$,
where $X$ is an $r$-clique in $G$ with $X \cap \{v_{1}, \dots, v_{s-1}\} \neq \emptyset$.

\textbf{Case 1.} $X\cap (V(G)\setminus V(G_{1})) \neq \emptyset$.
Assume without loss of generality that $v_1 \in X \cap \{v_{1}, \dots, v_{s-1}\}$.
Take any vertex $w\in X\cap (V(G)\setminus V(G_{1}))$.
Then $v_1$ and $w$ lie in a common $r$-clique $X$ of $G$, which contains a $t$-clique intersecting both $V(G_1)$ and its complement; contradicting the assumption on $G_1$.

\textbf{Case 2}. $X\subseteq V(G_{1})$.
Then $X$ induces a $K_{r}$ in $G_{1}$, and $u$ has at most $s-1$ non-neighbors in $X$.
Since $r \geq t$, we can choose a $t$-subset $Y$ from $X$.
Surely, $Y$ induces a  $t$-clique $C_2$ in $G_{1}$.
However,
$$|N_{C_{2}}(u)| \geq t-s+1 = |N_{C_{1}}(u)|+1,$$
contradicting the maximuality of $C_1$.

Thus, $G'$ is $kK_{r+1}$-free.
Moreover, the vertices
$u, v_{1}, \dots, v_{s-1}$ together with the neighbors of $u$ in $C_1$ form
a new $t$-clique in $G'$.
Hence, the $t$-clique hypergraph $H_{G'}$ has a connected component that contains $H_{G_1}$ as a proper subhypergraph.
By Lemma \ref{Khan-rho} and Lemma \ref{Liu-conn},
$$\rho_t(G') > \rho_t(G_{1})=\rho_t(G),$$
a contradiction.
\end{proof}

Next, we derive a lower bound for $\rho_{t}(G)$.

\begin{lem}\label{lowerbound}
$$\rho_{t}(G)\geq  \binom{r-1}{t-1}\left(\frac{n}{r}\right)^{t-1}+\Theta(n^{t-2}).$$
\end{lem}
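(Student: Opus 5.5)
Since $G$ maximizes $\rho_t$ among $kK_{r+1}$-free graphs and $K_{k-1}\vee T_r(n-k+1)$ is $kK_{r+1}$-free, we have $\rho_t(G)\ge \rho_t(K_{k-1}\vee T_r(n-k+1))$. This graph contains $T_r(n-k+1)$ as a subgraph, and the $t$-cliques of $T_r(n-k+1)$ are among those of the join. The variational formula following Definition \ref{t-cliq} is monotone in the clique set: take the optimal vector for the subgraph and extend it by zeros on the $K_{k-1}$. It therefore suffices to bound $\rho_t(T_r(m))$ from below with $m=n-k+1$. (If a sharper bound is needed, one can use the join directly; the subgraph bound already gives the stated order.)

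For the Turán graph I would apply Lemma \ref{Spec-Clq}: $\rho_t(T_r(m))\ge \frac{t}{m}c_t(T_r(m))$. Write the part sizes as $m_1,\dots,m_r$ with $m_i=m/r+O(1)$. Then
$$c_t(T_r(m))=\sum_{1\le i_1<\dots<i_t\le r} m_{i_1}\cdots m_{i_t}=\binom{r}{t}\Big(\frac{m}{r}\Big)^t+O(m^{t-1}).$$
The identity $\frac{t}{m}\binom{r}{t}(m/r)^t=\binom{r-1}{t-1}(m/r)^{t-1}$ then gives
$$\rho_t(T_r(m))\ge \binom{r-1}{t-1}\Big(\frac{m}{r}\Big)^{t-1}+O(m^{t-2}).$$

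Finally, substitute $m=n-k+1$. Since $(n-k+1)^{t-1}=n^{t-1}-(t-1)(k-1)n^{t-2}+O(n^{t-3})$ and $k$ is fixed, the shift contributes only a term of order $n^{t-2}$. This yields $\rho_t(G)\ge \binom{r-1}{t-1}(n/r)^{t-1}+\Theta(n^{t-2})$, where the $\Theta(n^{t-2})$ term is understood as a quantity of order $n^{t-2}$ with constant depending on $r,t,k$. That is exactly what is needed to apply Lemma \ref{Stability} later, since any $\delta>0$ absorbs it for large $n$.

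The main technical point is the rounding in the part sizes of $T_r(m)$. Each $m_i$ is $\lfloor m/r\rfloor$ or $\lceil m/r\rceil$, so expanding $\prod m_{i_j}$ introduces only $O(m^{t-1})$ errors in $c_t$, hence $O(m^{t-2})$ errors after dividing by $m$. These errors are uniform in the divisibility class of $m$ modulo $r$. If a positive second-order term is desired, one can use the join $K_{k-1}\vee T_r(n-k+1)$ directly: put weight $x_u=x_v$ uniformly and count the extra cliques meeting the $K_{k-1}$, about $(k-1)\binom{r}{t-1}(n/r)^{t-1}$ of them. This adds a positive $\Theta(n^{t-2})$ contribution that outweighs the loss from the shift $n\to n-k+1$.
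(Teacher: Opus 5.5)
Your main route is valid, but it proves a weaker inequality than the lemma. Restricting to the subgraph $T_r(n-k+1)$ discards every $t$-clique through the $K_{k-1}$, and the shift $n\to n-k+1$ then makes the correction negative. You obtain $\rho_t(G)\ge\binom{r-1}{t-1}(n/r)^{t-1}-\frac{(t-1)(k-1)}{r^{t-1}}\binom{r-1}{t-1}n^{t-2}+O(n^{t-3})$. Under the usual meaning of $\Theta$, the lemma asserts a \emph{positive} correction of order $n^{t-2}$, so reading $\Theta(n^{t-2})$ as a signed term changes the statement. The paper instead applies Lemma~\ref{Spec-Clq} to $K_{k-1}\vee T_r(n-k+1)$ on all $n$ vertices. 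There $c_t=\binom{r}{t}(n/r)^t+(k-1)\binom{r-1}{t-2}(n/r)^{t-1}+O(n^{t-2})$ (the paper's display has $n/t$ where $n/r$ is meant), which gives the correction $\frac{t(k-1)}{r^{t-1}}\binom{r-1}{t-2}n^{t-2}>0$. The sign is invoked later. For $t=2$ the leading constants in \eqref{B-rhoG} coincide, so the strict inequality there rests on it. The contradiction in Lemma~\ref{2nd-ev} is also stated against $\rho_t(G)\ge\binom{r-1}{t-1}(n/r)^{t-1}$. You are right that your bound suffices for Lemma~\ref{Stability}, and those later steps have enough slack to survive it after rewording, but it is not the lemma.

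Your closing remark is the paper's argument, but as written it does not secure the sign, and two things are missing. First, the comparison itself. With the uniform vector on $n$ vertices, the shift costs $\frac{t}{n}\binom{r}{t}\big[(n/r)^t-(m/r)^t\big]=\frac{t(k-1)}{r^{t-1}}\binom{r-1}{t-1}n^{t-2}+O(n^{t-3})$. The $(k-1)c_{t-1}(T_r(m))$ cliques meeting $K_{k-1}$ contribute $\frac{t(k-1)}{r^{t-1}}\binom{r}{t-1}n^{t-2}+O(n^{t-3})$. The net is positive by Pascal's rule, $\binom{r}{t-1}-\binom{r-1}{t-1}=\binom{r-1}{t-2}\ge 1$. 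Second, and more seriously, your rounding estimate is too crude. An $O(m^{t-1})$ error in $c_t(T_r(m))$ becomes $O(n^{t-2})$ after multiplying by $t/n$, which is the same order as the net gain and could cancel it. You need the error to be $O(m^{t-2})$. Write $m_i=m/r+\delta_i$ with $\sum_i\delta_i=0$. The linear term in the expansion of $\sum_{i_1<\dots<i_t}m_{i_1}\cdots m_{i_t}$ is $\binom{r-1}{t-1}(m/r)^{t-1}\sum_i\delta_i=0$, and the remaining terms are $O(m^{t-2})$ since $|\delta_i|<1$. With these two points, the join computation proves the lemma as stated.
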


\begin{proof}
Let $G' = K_{k-1} \vee T_r(n-k+1)$. Clearly, $G'$ is $kK_{r+1}$-free.
Since $G$ has the maximum $t$-clique spectral radius,
we have $\rho_t(G) \geq \rho_t(G')$.
We estimate the number of $t$-cliques in $G'$.
A direct calculation gives
$$c_t(G') = \binom{r}{t} \left(\frac{n}{r}\right)^t + (k-1)\left(\binom{r}{t-1}-\binom{r-1}{t-1}\right)\left(\frac{n}{t}\right)^{t-1}+O(n^{t-2}),$$
where the leading contribution comes from the $t$-cliques entirely contained in $T_r(n-k+1)$.
By Lemma \ref{Spec-Clq}, we obtain
$$\rho_t(G) \geq \frac{t}{n} c_t(G')
\geq  \binom{r-1}{t-1}\left(\frac{n}{r}\right)^{t-1}+\Theta(n^{t-2}),$$
as claimed.
\end{proof}

By Lemma \ref{lowerbound}, for any $\delta >0$, there exists $n_0$ such that for all $n \ge n_0$,
$$\rho_t(G) > \left( \binom{r-1}{t-1} \left( \frac{1}{r} \right)^{t-1} - \delta \right) n^{t-1}.$$
Since $\chi(kK_{r+1}) = r+1 > 2$, Lemma \ref{Stability} implies that $G$ can be obtained from $T_r(n)$ by adding and deleting at most $\varepsilon n^2$ edges.
Hence, by Lemma \ref{Ni-Turan}, there exists a partition $V(G)=V_{1}\cup\cdots\cup V_{r}$ such that
$\sum_{1\leqslant i<j\leqslant r}e(V_{i},V_{j})$ is maximized
and $\sum_{i=1}^{r}e(V_{i})\leqslant\varepsilon n^{2}$.
Define
$$
W := \textstyle\bigcup_{i=1}^{r} \{ v \in V_{i} : d_{V_{i}}(v) \geq 2\theta n \},
~ L := \bigl\{ v \in V(G) : d_{G}(v) \leq \bigl(1 - \tfrac{1}{r} - \varepsilon_{1}\bigr)n, \bigr\}
$$
where $\theta$ is sufficiently small with $\theta < \frac{1}{20kr^{4}(r+1)}$, and
$\sqrt{\varepsilon} < \varepsilon_{1} \ll \theta$.

We now investigate the local structural properties of $G$.
Let $v \in V_i$.
We will show that the neighborhood of $v$ in $\bigcup_{j \ne i} V_j \setminus (W \cup L)$ is $kK_r$-free (Lemma \ref{v-neigh}), and that if $v \in V_i \setminus (W \cup L)$, then $v$ has at most $(k-1)(r+1)$ neighbors in $V_i \setminus (W \cup L)$ (Lemma \ref{v-indeg}).

\begin{lem}\label{v-neigh}
Let $\hat{V}_i=\bigcup_{j \ne i} V_j$ for each $i \in [r]$.
Then, for any $v\in V_{i}$,
the induced subgraph $G[N_{\hat{V}_i\setminus (W\cup L)}(v)]$ is $kK_{r}$-free.
\end{lem}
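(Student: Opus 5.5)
Argue by contradiction. Fix $v\in V_i$ and suppose $G[N_{\hat V_i\setminus(W\cup L)}(v)]$ contains $k$ vertex-disjoint copies $Q_1,\dots,Q_k$ of $K_r$. Each $Q_\ell$ lies in $\hat V_i\setminus(W\cup L)$ and consists of neighbours of $v$, so $Q_\ell\cup\{v\}$ is an $(r+1)$-clique. These $k$ cliques all share $v$, so they are not yet a $kK_{r+1}$. The aim is to keep one of them with $v$ and, for each of the other $k-1$, replace $v$ by a fresh vertex of $V_i$ adjacent to the whole clique. That produces $k$ disjoint copies of $K_{r+1}$ in $G$, contradicting that $G$ is $kK_{r+1}$-free.

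\textbf{Finding common neighbours.} Every vertex $u\notin W\cup L$ has large degree, $d(u)>(1-\frac1r-\varepsilon_1)n$, and few neighbours in its own part, $d_{V_j}(u)<2\theta n$ where $u\in V_j$. By Lemma \ref{Ni-Turan}(a), $|V_j|<\frac nr+3\sqrt\varepsilon n$, so
$$d_{V_i}(u)\ \ge\ d(u)-\sum_{j'\ne i,j}|V_{j'}|-d_{V_j}(u)\ \ge\ |V_i|-\bigl(\varepsilon_1+2\theta+3r\sqrt\varepsilon\bigr)n .$$
Now apply Lemma \ref{set-cap} to the $r$ sets $N_{V_i}(u)$, $u\in Q_\ell$, whose union lies in $V_i$:
$$\Bigl|\bigcap_{u\in Q_\ell}N_{V_i}(u)\Bigr|\ \ge\ |V_i|-r\bigl(\varepsilon_1+2\theta+3r\sqrt\varepsilon\bigr)n .$$
Since $\theta<\frac{1}{20kr^4(r+1)}$ and $\varepsilon_1\ll\theta$, this is at least $|V_i|-\frac{n}{10r^2}$, which is linear in $n$ because $|V_i|\approx n/r$.

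\textbf{Assembling the copies.} Keep $Q_1\cup\{v\}$. For $\ell=2,\dots,k$ in turn, choose $w_\ell\in\bigcap_{u\in Q_\ell}N_{V_i}(u)$ with $w_\ell\ne v$ and $w_\ell$ different from $w_2,\dots,w_{\ell-1}$. Only $k-1$ vertices are excluded at each step and the pool has linear size, so such a choice exists. The $Q_\ell$ lie in $\hat V_i$ while $v$ and the $w_\ell$ lie in $V_i$, so all the sets $Q_1\cup\{v\},Q_2\cup\{w_2\},\dots,Q_k\cup\{w_k\}$ are pairwise disjoint. Each is an $(r+1)$-clique, so together they form a $kK_{r+1}$ in $G$, a contradiction. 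This also explains why $W\cup L$ is excluded from the statement: the argument only needs the degree control on the vertices of the $Q_\ell$.

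\textbf{Main obstacle.} The only delicate step is the constant bookkeeping in the common-neighbour bound. The accumulated loss $r(\varepsilon_1+2\theta+3r\sqrt\varepsilon)n$ must stay strictly below $|V_i|-k$. Given the stated ranges of $\theta$ and $\varepsilon_1$, there is ample room for this.
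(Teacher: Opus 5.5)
Your proof is correct. It differs from the paper's in how you control the common neighbourhood of each $K_r$.

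The paper fixes a copy $u_1,\dots,u_r$ and uses pigeonhole to place two of its vertices, $u_1,u_2$, in the same part of $\hat V_i$. It then bounds the union of the non-neighbourhoods $A_j=V(G)\setminus N_G(u_j)$ over the whole vertex set, using $|A_1\cup A_2|\le n/r+O(\theta n)$ and $|A_j|\le n/r+\varepsilon_1 n$ for $j\ge 3$. This gives a common neighbourhood of size at least $n/r-O(\theta n)$ somewhere in $G$, from which apexes are chosen greedily.

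You instead localize to the missed part. Every vertex of $\hat V_i\setminus(W\cup L)$ is adjacent to all but $O(\theta n)$ vertices of $V_i$, so Lemma~\ref{set-cap} gives at least $|V_i|-O(r\theta n)$ common neighbours inside $V_i$. This has two advantages. It avoids the pigeonhole step. It also makes disjointness automatic, since your apexes lie in $V_i$ while every $Q_\ell$ lies in $\hat V_i$. The paper's greedy step tacitly also has to avoid the vertices of the other copies of $K_r$; this is harmless because the common neighbourhood is linear, but it is not spelled out.

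What the paper's computation buys is independence from the particular missed part. It shows that any $r$-clique in $V(G)\setminus(W\cup L)$ with two vertices in a common part has a linear common neighbourhood, a slightly more general fact not needed here.

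Using $v$ itself as the apex of $Q_1$ is unnecessary, since all $k$ apexes could be taken in $V_i$, but it is harmless.
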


\begin{proof}
Suppose, to the contrary, that there exists a vertex $v \in V_{i}$
such that $G[N_{\hat{V}_i\setminus (W\cup L)}(v)]$ contains $k K_r$.
Fix one copy of $K_{r}$ with vertex set $u_{1},\cdots,u_{r}$.
Since the vertex set $V(G[N_{\hat{V}_i\setminus (W\cup L)}(v)])$ is partitioned into $r-1$ parts, the set $\{u_{1},\cdots,u_{r}\}$ contains two vertices from the same part.
Without loss of generality, assume that $v \in V_{1}$ and
$u_{1},u_{2} \in V_{2}$.
We show that $u_{1},\ldots,u_{r}$ have a large common neighborhood.

For each $u_{i}$ ($1\leq i\leq r$), let $s_i$ be the index such that $u_{i} \in V_{s_i}$.
Since $u_i \notin L$, we have
\begin{equation}\label{NGui}
|N_{G}(u_{i})|=|N_{G}(u_{i})\cap V_{s_i}|+\sum_{j\neq s_i}|N_{G}(u_i)\cap V_{j}|
> \bigl( 1-\tfrac{1}{r}-\varepsilon_{1}\bigr)n.
\end{equation}
As $u_{i}\notin W$, it follows that $|N_{G}(u_{i})\cap V_{s_i}|\leq 2\theta n$.
Hence,
\begin{equation}\label{NGuiVj}
\sum_{j\neq s_i}|N_{G}(u_{i})\cap V_{j}|
\geq\frac{r-1}{r}n-\varepsilon_{1}n-2\theta n.
\end{equation}
On the other hand, by Lemma \ref{Ni-Turan} $(a)$,
$\tfrac{n}{r}-3\sqrt{\varepsilon}n<|V_{i}|<\tfrac{n}{r}+3\sqrt{\varepsilon}n$ for any $i\in[r]$, and thus
\begin{equation}\label{SumVj}
\sum_{j\neq s_i}|V_{j}|=n-|V_{s_i}|
\leq \frac{r-1}{r}n-3\sqrt{\varepsilon}n.
\end{equation}
Subtracting (\ref{NGuiVj}) from (\ref{SumVj}) gives
\begin{equation}\label{u1u2NonN}
\sum_{j\neq s_i}(|V_{j}|-|N_{G}(u_{i})\cap V_{j}|)
\leq (\varepsilon_{1}-3\sqrt{\varepsilon}+2\theta)n
=O(\theta n),
\end{equation}
where the equality follows from
$\sqrt{\varepsilon}<\varepsilon_{1} \ll \theta$.
Thus, for each fixed index $j\neq s_i$, the number of non-neighbors of $u_{i}$ in $V_{j}$ is $O(\theta n)$.
In particular, for $j\neq 2$, by \eqref{u1u2NonN}, we have
$$|(V_j\setminus N_{G}(u_{1}))\cup(V_{j}\setminus N_{G}(u_{2}))|
\leq|V_{j}\setminus N_{G}(u_{1})|+|V_{j}\setminus N_{G}(u_{2})|
=O(\theta n).$$

For $i\in[r]$, let $A_{i}:=V(G)\setminus N_{G}(u_{i})$ be
the set of non-neighbors of $u_{i}$ in $G$.
By \eqref{NGui},
$$ |A_{i}|\leq n-|N_{G}(u_{i})|\leq \frac{n}{r}+\varepsilon_{1}n.$$
Let $B_{i}:=V_{s_i}\setminus N_{G}(u_{i})$ be the set of non-neighbors of $u_{i}$ within $V_{s_i}$.
Since $B_{1}\cup B_{2}\subseteq V_{2}$, it follows that
\begin{equation}\label{B1B2}
|B_{1}\cup B_{2}|\leq |V_{2}|<\frac{n}{r}+3\sqrt{\varepsilon}n.
\end{equation}
Thus, by \eqref{B1B2} and \eqref{u1u2NonN}, we have
$$
\begin{aligned}
|A_{1}\cup A_{2}| &=|B_{1}\cup B_{2}|+\sum_{j\neq 2}|(V_{j}\setminus N_{G}(u_{1}))\cup(V_{j}\setminus N_{G}(u_{2}))|\\
&\leq\left(\frac{n}{r}+3\sqrt{\varepsilon}n\right)+(r-1)\cdot O(\theta n)\\
&=\frac{n}{r}+O(\theta n).
\end{aligned}
$$
Therefore,
$$
\begin{aligned}
\left|\textstyle\bigcup_{i=1}^{r}A_{i}\right| &\leq |A_{1}\cup A_{2}|+\sum\limits_{j=3}^{r}|A_{j}|\\
&\leq \frac{n}{r}+O(\theta n)+(r-2)\left(\frac{n}{r}+\varepsilon_{1}n\right)\\
&= \frac{r-1}{r}\,n+O(\theta n).
\end{aligned}
$$
Hence,
$$\left| \textstyle\bigcap_{i=1}^{r} N_{G}(u_{i})\right|
= n-\left| \textstyle\bigcup_{i=1}^{r} A_{i}\right|
\geq \frac{n}{r} - O(\theta n).$$
For sufficiently large $n$ and sufficiently small $\theta$, each such $K_r$ has a large common neighborhood.

Thus, if $G[N_{\hat{V}_i\setminus (W\cup L)}(v)]$ contains $k$ vertex-disjoint copies of $K_{r}$, we can greedily select one common neighbor for each to obtain $k$ vertex-disjoint copies of $K_{r+1}$ in $G$,
contradicting that  $G$ is $kK_{r+1}$-free.
\end{proof}

\begin{lem}\label{v-indeg}
For any $i \in [r]$ and any $v\in V_{i}\setminus (W\cup L)$,
$d_{V_{i}\setminus (W\cup L)}(v)\leq (k-1)(r+1)$.
\end{lem}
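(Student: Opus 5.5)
Suppose, to the contrary, that some $v\in V_i\setminus(W\cup L)$ has at least $(k-1)(r+1)+1$ neighbors in $V_i\setminus(W\cup L)$. Let $N:=N_{V_i\setminus(W\cup L)}(v)$. We will build $k$ vertex-disjoint copies of $K_{r+1}$ in $G$, one through $v$ and $k-1$ avoiding it, contradicting that $G$ is $kK_{r+1}$-free. The tool is the common-neighbourhood estimate from the proof of Lemma \ref{v-neigh}. Any vertex $u\notin W\cup L$ lying in $V_s$ misses only $O(\theta n)$ vertices in each $V_j$ with $j\neq s$; this is \eqref{u1u2NonN}. Hence any bounded family of vertices outside $W\cup L$, taken from prescribed parts, has $\ge \frac nr - O(\theta n)$ common neighbours in every part not among those prescribed (and with $V_j$ minus $W\cup L$ still of size $\frac nr-O(\theta n)$, using $|W|\le\theta n$ and $|L|\le\varepsilon_2 n$ from Lemma \ref{Ni-Turan}).

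\textbf{Step 1: a $K_{r+1}$ through $v$.} Pick a neighbor $w\in N$, so $vw$ is an edge inside $V_i$. Both $v,w\notin W\cup L$, so for each $j\ne i$ the common neighbourhood of $v,w$ in $V_j\setminus(W\cup L)$ has size $\ge \frac nr - O(\theta n)$. Greedily choose $x_j$ for $j\neq i$, one by one. At each stage the set chosen so far has bounded size ($\le r+1$), so its common neighbourhood in the next part $V_j\setminus(W\cup L)$ is still $\frac nr-O(\theta n)>0$. This gives a clique $\{v,w\}\cup\{x_j:j\ne i\}$ on $r+1$ vertices.

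\textbf{Step 2: $k-1$ further disjoint copies.} Repeat Step 1 $k-1$ more times. Each time use a fresh edge $vw_\ell$ with $w_\ell\in N$, but this reuses $v$, so a correction is needed. Instead use $v$ only in the first clique. For the others, use the vertices of $N$ paired among themselves: either edges inside $N$ or, failing that, a second common structure. The intended choice is the following. Since $|N|\ge (k-1)(r+1)+1$ and each clique built in Step 1 uses few vertices of $V_i$, after removing the $\le (k-1)(r+1)$ vertices already used there remains a fresh $w_\ell\in N$. The issue is that $v$ must be avoided. The cleaner fix is to use the edges $vw_1,\dots$ for all $k$ cliques but replace $v$ in cliques $2,\dots,k$: the clique with base edge $vw_\ell$ needs $v$.

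So I would instead use a matching argument. Consider $G[V_i\setminus(W\cup L)]$. By Lemma \ref{Ni-Turan}(d) this graph, after deleting $2(k-1)$ vertices, is independent, so all its edges are covered by at most $2(k-1)$ vertices. Among them, $v$ has degree $>(k-1)(r+1)$. If this graph has a matching of size $k$, then applying Step 1 to each matching edge, always choosing the $x_j$ greedily among unused vertices, gives $kK_{r+1}$. Greediness keeps the total used set bounded, of size $\le k(r+1)$. So the matching number is $\le k-1$.

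Now take a maximum matching $M$, with $|M|\le k-1$, and let $U$ be its vertex set. If $v\notin U$, then every neighbor of $v$ in $N$ lies in $U$, since otherwise $M$ could be enlarged. This forces $|N|\le 2(k-1)$, a contradiction. If $v\in U$, delete from $M$ the edge covering $v$ and add $vw$ for some $w\in N\setminus U$. Such a $w$ exists because $|N|>(k-1)(r+1)\ge 2(k-1)$. This produces a maximum matching in which $v$'s partner is unrestricted. Then the remaining $k-2$ matching edges together with $vw$ give $k-1$ base edges. We also want one more disjoint $K_{r+1}$. I would obtain it from Lemma \ref{v-neigh}-type counting in the other parts. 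Alternatively, I expect the paper's bound $(k-1)(r+1)$ comes from a direct argument: for each of the $k-1$ other cliques, built with bases outside $v$'s edge, at most $r+1$ neighbours of $v$ get consumed, so a free neighbour $w$ remains for $v$'s clique.

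Concretely, the plan is as follows. First, if $G-v$ contains $(k-1)K_{r+1}$ using at most $(k-1)(r+1)$ vertices, then some $w\in N$ is unused. Step 1 on $vw$, avoiding the used vertices, finishes. So the main obstacle is guaranteeing $(k-1)K_{r+1}$ in $G-v$. I expect this from $G$ being close to Tur\'an. For instance, the graph $G$ restricted to $V_i\setminus(W\cup L)$ is nearly empty, but the cross structure is nearly complete. We get $k-1$ disjoint $K_{r+1}$ as soon as there are $k-1$ disjoint edges inside parts, among non-$W\cup L$ vertices distinct from $v$. The edges $ww'$ with $w,w'\in N$ won't necessarily exist. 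So I would first try to use the edges $vw$ themselves (a star), together with edges inside other parts. The fallback is the extremality of $\rho_t(G)$: if no such edges exist, then moving $v$ would contradict the maximality of the crossing-edge partition, since $v$ has $\ge$ many neighbours in its own part but $\ge\frac nr-O(\theta n)$ in every other part. That comparison is the step I expect to be delicate.
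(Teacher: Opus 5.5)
There is a genuine gap. Your main line tries to contradict the $kK_{r+1}$-freeness of $G$ directly, by building $k$ disjoint copies of $K_{r+1}$ in $G$. That cannot succeed, because the statement does not follow from $kK_{r+1}$-freeness plus near-Tur\'an structure; it needs the extremality of $\rho_t(G)$.

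For instance, take $k=2$ and let $G$ be $T_r(n)$ together with $\lfloor\theta n\rfloor$ edges from a vertex $v\in V_1$ to other vertices of $V_1$. Every $K_{r+1}$ has two vertices in a common part, so it uses one of these star edges and therefore contains $v$. Hence $G$ is $2K_{r+1}$-free. Take the Tur\'an parts as the partition (they remain a max-crossing partition). Then $W=L=\emptyset$, since $d_{V_1}(v)<2\theta n$ and all degrees are $(1-\frac1r)n-O(1)$. Yet $d_{V_1\setminus(W\cup L)}(v)=\lfloor\theta n\rfloor>(k-1)(r+1)$. This is exactly the configuration where your matching argument stalls: the matching number is $k-1$, and no further disjoint $K_{r+1}$ exists. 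Your fallback does not rescue it either. Relocating $v$ cannot increase the number of crossing edges, since $v$ has fewer than $2\theta n$ neighbours in its own part and $\frac nr-O(\theta n)$ in every other part.

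The missing idea is to apply your final paragraph to a modified graph rather than to $G$. Assuming $v\in V_1$, let $G'$ be obtained from $G$ by joining $v$ to every vertex of $V_1\setminus(W\cup L)$.
\begin{itemize}
\item Suppose $G'$ contained $k$ disjoint copies of $K_{r+1}$. All new edges are at $v$, so exactly one copy contains $v$. The other $k-1$ copies lie in $G-v$; they are precisely the $(k-1)K_{r+1}$ you could not produce.
\item These copies use at most $(k-1)(r+1)$ vertices, so some $w\in N$ survives. Your Step~1 on the edge $vw$, avoiding those vertices, then yields $kK_{r+1}$ in $G$, a contradiction. Hence $G'$ is $kK_{r+1}$-free.
\item Since $v\notin W$, $v$ has $(\frac1r-O(\theta))n$ non-neighbours in $V_1\setminus(W\cup L)$, so edges really are added.
\item Each added edge lies in new $t$-cliques. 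This is trivial for $t=2$; for $t\ge 3$ it follows from the large common neighbourhood of its endpoints in the other parts. Thus $\mathcal{A}_t(G)<\mathcal{A}_t(G')$.
\item $G'$ is $t$-clique connected because $G$ is (Lemma~\ref{t-clq-conn}). Lemma~\ref{Khan-rho} then gives $\rho_t(G')>\rho_t(G)$, contradicting the choice of $G$.
\end{itemize}
So the hypothesis $d_{V_1\setminus(W\cup L)}(v)\ge(k-1)(r+1)+1$ is not used to find $kK_{r+1}$ in $G$. It is used to show that saturating $v$'s neighbourhood inside its own part is safe, and the contradiction then comes from spectral extremality.
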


\begin{proof}
Suppose, to the contrary, that there exists a vertex $v \in V_{i}\setminus (W\cup L)$ for some $i \in [r]$
such that $d_{V_{i}\setminus (W\cup L)}(v)\geq (k-1)(r+1)+1$.
Without loss of generality, assume $v \in V_{1} \setminus (W\cup L)$.

Let $G'$ be the graph obtained from $G$ by adding all edges between $v$ and the vertices of $V_{1}\setminus (W\cup L)$, namely,
$E(G')=E(G)\cup\{vw: w\in V_{1}\setminus (W\cup L)\}.$
We first claim that $G'$ is still $kK_{r+1}$-free.
Otherwise, suppose that $G'$ contains $k$ vertex-disjoint copies of $K_{r+1}$, say $C_{1}, C_{2}, \dots, C_{k}$.
Since any new $K_{r+1}$ in $G'$ must use at least one added edge, it must contain the vertex $v$.
Thus, exactly one of these cliques, say $C_{1}$, contains $v$.

Let $G''=G-\bigcup_{j=2}^{k}V(C_{j})$,
and write $V(G'')=V_{1}'\cup\cdots\cup V_{r}'$ according to the original partition,  where $v \in V_{1}'$.
Thus, $$d_{V_{1}'\setminus (W\cup L)}(v)\geq d_{V_{1}\setminus (W\cup L)}(v)-(k-1)(r+1)\geq 1.$$
Let $u$ be a neighbor of $v$ in $G''[V_{1}' \setminus (W\cup L)]$.
We show that $G''$ contains a $K_{r+1}$ containing $u$ and $v$, together with one vertex from each of $V'_2, \ldots, V'_r$.

We construct such $K_{r+1}$ iteratively.
For any vertex $w \in V_{1}' \setminus (W\cup L)$, we have
$$d_{G''} (w)\geq \left(1-\frac{1}{r}-\varepsilon_{1}\right)n-(k-1)(r+1).$$
Recall that $|V_{i}'|\leq |V_{i}|<n/r+3\sqrt{\varepsilon}n$ for any $i\in[r]$.
Hence,
$$
\begin{aligned}
d_{V_{2}'}(w) &\geq d_{G''}(w)-d_{V_{1}'}(w)-\sum_{s =3}^{r} |V_{s}'|\\
&\geq \left(1-\frac{1}{r}-\varepsilon_{1}\right)n-(k-1)(r+1)-2\theta n- (r-2)\left(\frac{n}{r}+3\sqrt{\varepsilon}n\right)\\
&\geq \frac{n}{r}-g(n),
\end{aligned}
$$
where $g(n) = (3r-6)\sqrt{\varepsilon}n + 2\theta n + \varepsilon_{1}n + (k-1)(r+1).$
Thus, we have
$$
\begin{aligned}
|N_{V_{2}'}(u) \cap N_{V_{2}'}(v) \setminus (W\cup L)|
&\geq d_{V_{2}'}(u) +d_{V_{2}'}(v) -|N_{V_{2}^{'} }(u)\cup N_{V_{2}' }(v)|-|W|-|L|\\
&\geq 2\left(\frac{n}{r} - g(n)\right) - \left(\frac{n}{r} + 3\sqrt{\varepsilon}n\right) - \theta n - \varepsilon_{2}n\\
&= \frac{n}{r} - \left(2g(n) + 3\sqrt{\varepsilon}n + \theta n+\varepsilon_{2}n\right)\\
&> 1,
\end{aligned}
$$
for sufficiently small $\varepsilon,\varepsilon_{2},\theta$ and large $n$.
Thus, $u$ and $v$ have a common neighbor in $V_{2}' \setminus (W\cup L)$, which together form a $K_{3}$.

Proceeding inductively, for every integer $s$ with $3 \leq s \leq r$, suppose we have constructed a $K_{s}$ with vertex set
$S=\{u,v,w_{1},\ldots,w_{s-2}\}$,
where $w_{i} \in V_{i+1}' \setminus (W\cup L)$ for $i \in [s-2]$.
Then, by Lemma \ref{set-cap},
\begin{equation}\label{S-neigh}
\begin{split}
\bigg| \textstyle\bigcap_{w \in S} N_{V_{s}'}(w) \setminus (W\cup L) \bigg|
&\geq \sum_{w \in S} d_{V_{s}'}(w)-(|S|-1) \bigg| \textstyle\bigcup_{w \in S} N_{V_{s}'}(w) \bigg| - |W|- |L|\\
&\geq s\left(\frac{n}{r} - g(n)\right)-(s-1)\left(\frac{n}{r} + 3\sqrt{\varepsilon}n\right) - \theta n -\varepsilon_{2}\\
&= \frac{n}{r} - [ s g(n) + 3(s-1)\sqrt{\varepsilon}n + \theta n +\varepsilon_{2}n]\\
&> 1.
\end{split}
\end{equation}
Thus we can choose $w_{s-1} \in V_{s}' \setminus (W\cup L)$ adjacent to all vertices of $S$.
Then $S \cup \{w_{s-1}\}$ induces a $K_{s+1}$, extending the above clique $K_s$.
After $r-1$ steps, we can find a $K_{r+1}$ in $G''$.
By definition, $G$ contains a $kK_{r+1}$, a contradiction.
Therefore, $G'$ is $kK_{r+1}$-free.

Finally, by Lemma \ref{Ni-Turan}, we have
$$
\begin{aligned}
|V_{1} \setminus (W\cup L)| -d_{V_{1} \setminus (W\cup L)}(v)
& \geq |V_{1}|-|W|-|L| - d_{V_1}(v)\\
& >\left(1/r-3\sqrt{\varepsilon}-3\theta-\varepsilon_{2}\right)n.
\end{aligned}
$$
Hence, $G'$ is obtained from $G$ by adding a large number of edges incident to the vertex $v$.
As in the above argument, $G'$ produces new $t$-cliques that contain some of the added edges.
Since  $G$ is $t$-clique connected (Lemma \ref{t-clq-conn}), so is $G'$, and hence
by Lemma \ref{Khan-rho}, $\rho_{t}(G')> \rho_{t}(G)$; a contradiction.
Thus, $d_{V_{i}\setminus (W\cup L)}(v)\leq (k-1)(r+1)$ for all $v \in V_i \setminus (W \cup L)$.
\end{proof}

Actually, since both $\varepsilon_{2},\varepsilon$ and $\theta$ are sufficiently small, the lower bound in \eqref{S-neigh} can be strengthened to
$$\bigg| \textstyle\bigcap_{w \in S} N_{V_{s}'}(w) \setminus (W\cup L) \bigg|
\geq n/r- [ s g(n) + 3(s-1)\sqrt{\varepsilon}n + \theta n +\varepsilon_{2}n]   > n/(r+1).$$
This implies that for any vertex $u \in V(G)\setminus (W\cup L)$, or any edge $uv$ contained in some $V_{i} \setminus (W\cup L)$,
its neighborhood contains a complete $(r-1)$-partite subgraph with each part of size at least $n/(r+1)$.
Arguing as in Lemma \ref{v-indeg}, we obtain the following corollary.

\begin{cor}\label{v-ed-clq}
For each $t$ with $2 \le t \le r$, every vertex $u \in V(G)\setminus (W\cup L)$ and every edge $uv \in E(G[V_{i}\setminus (W\cup L)])$ are contained in at least
$\binom{r-1}{t-1}\left(\frac{n}{r+1}\right)^{t-1}$ copies of $K_{t+1}$.
\end{cor}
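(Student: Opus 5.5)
The plan is to reuse the iterative clique-building from Lemma \ref{v-indeg}, in the strengthened form noted just before the corollary, and then count cliques inside the resulting complete multipartite structure. We work in $G$ itself, not in $G''$; no vertices are deleted, so the term $(k-1)(r+1)$ in $g(n)$ may be dropped. The bounds remain valid with room to spare.

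\emph{Vertex case.} Take $u\in V_i\setminus(W\cup L)$, say $i=1$. Since $u\notin W\cup L$, the computation in Lemma \ref{v-indeg} gives $d_{V_j}(u)\ge n/r-g(n)$ for every $j\neq 1$. For the edge case, take $uv\in E(G[V_1\setminus(W\cup L)])$; the proof of Lemma \ref{v-indeg} shows directly that $u$ and $v$ have a large common neighbourhood in each $V_j$, $j\ne 1$.

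\emph{Step 1: common neighbourhoods.} For the clique count, I would not build a single $K_{r+1}$ greedily. Instead I would count tuples. Let the seed be $S_0=\{u\}$ or $S_0=\{u,v\}$. Choose vertices $w_2\in V_2,\dots,w_t\in V_t$ successively, always outside $W\cup L$, with each $w_s$ adjacent to the seed and to all previously chosen $w$'s. By Lemma \ref{set-cap} applied within $V_s$, the number of choices at each step is at least
\[
\frac{n}{r}-\bigl[s g(n)+3(s-1)\sqrt{\varepsilon}n+\theta n+\varepsilon_2 n\bigr]>\frac{n}{r+1}.
\]
This holds because every vertex involved lies outside $W\cup L$ and so satisfies the degree bound $d_{V_s}(\cdot)\ge n/r-g(n)$. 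The number of vertices involved is at most $r+1$, and the constants are chosen small relative to $1/r-1/(r+1)$, which is exactly the strengthened estimate stated before the corollary.

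\emph{Step 2: counting.} Multiplying over the steps gives at least $(n/(r+1))^{t-1}$ choices of $(w_2,\dots,w_t)$ for a fixed choice of $t-1$ parts out of the $r-1$ parts $V_2,\dots,V_r$. The parts are disjoint, so distinct tuples, and distinct choices of part sets, give distinct vertex sets. Summing over the $\binom{r-1}{t-1}$ choices of parts yields at least $\binom{r-1}{t-1}(n/(r+1))^{t-1}$ cliques containing the seed. For the edge seed these are $(t+1)$-cliques. For the vertex seed the same count gives $t$-cliques through $u$, which is the quantity actually used later. To obtain $K_{t+1}$ as stated, we can instead pick $t$ parts, or add one neighbour of $u$ inside its own part.

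The main obstacle is bookkeeping in Step 1. The iteration must run for up to $r$ steps, using only vertices outside $W\cup L$, while keeping every candidate set above $n/(r+1)$; this is just the requirement that $\varepsilon,\varepsilon_1,\varepsilon_2,\theta$ be small in terms of $r$. One also has to check that the tuple counting gives distinct cliques. Because each vertex comes from a prescribed distinct part, each clique is counted once per ordering of parts, and we fixed the parts to be in increasing order.
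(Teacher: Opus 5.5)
Your Steps 1 and 2 follow the paper's argument: iterate the common-neighbourhood bound of Lemma~\ref{set-cap} as in Lemma~\ref{v-indeg} (working in $G$, so the $(k-1)(r+1)$ term can be dropped), then count. Counting tuples level by level is the right way to turn that bound into a clique count. It is more careful than the paper's phrasing that the neighbourhood ``contains a complete $(r-1)$-partite subgraph with each part of size at least $n/(r+1)$''. That claim does not follow from a common-neighbourhood bound for sets of bounded size, whereas your branching count does. What the argument proves is $\binom{r-1}{t-1}(n/(r+1))^{t-1}$ copies of $K_t$ through a vertex $u\in V(G)\setminus(W\cup L)$, and the same number of copies of $K_{t+1}$ through an edge of $G[V_i\setminus(W\cup L)]$.

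The gap is in your closing repair for the vertex clause as literally stated, namely copies of $K_{t+1}$ through $u$.

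\textbf{Case $t\le r-1$.} Choosing $t$ of the $r-1$ other parts works. It gives $\binom{r-1}{t}(n/(r+1))^{t}$ cliques, which exceeds the bound for large $n$.

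\textbf{Case $t=r$.} Neither repair is available.
\begin{enumerate}
\item Only $r-1$ parts besides $u$'s own exist, so you cannot pick $r$ of them.
\item The condition $u\notin W$ gives only the upper bound $d_{V_i}(u)<2\theta n$. Nothing forces $u$ to have any neighbour in its own part.
\end{enumerate}
This failure already occurs in the extremal graph itself. Take $K_1\vee T_r(n-1)$, where $W=\{w\}$, $L=\emptyset$, and $w$ lies in one part $V_j$. A vertex $u$ of another part $V_i$ has no neighbour in $V_i$. Every $K_{r+1}$ through $u$ contains $w$ together with a second vertex of $V_j$. So it is never of the form your scheme produces (seed plus one vertex per part, all outside $W\cup L$).

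At this point of the paper one does not even know that $W\neq\emptyset$; that comes only in Lemma~\ref{W}. The paper's own justification has the same limitation, since an $(r-1)$-partite structure in $N(u)$ contains no $K_r$. So for $t=r$ your proposal does not establish the vertex clause with $K_{t+1}$.

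You should state the vertex part with $K_t$, which your count does prove. Nothing later in the paper needs $K_{t+1}$'s through a vertex: the subsequent arguments (Lemmas~\ref{W} and~\ref{W-dom}) only use cliques through edges.
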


By Lemma \ref{t-clq-conn}, $\A_t(G)$ admits a positive eigenvector $x$ corresponding to $\rho_{t}(G)$.
By normalization, assume $x_{u_0}=\max\{x_{v} : v \in V(G)\}=1$,
and define
$$x_{v_{0}}:=\max \{x_{v}:v\in V(G)\setminus W\}.$$
Recall that $L = \{ v \in V(G) : d_{G}(v) \leq (1 - \frac{1}{r} - \varepsilon_{1})n \}$.
We next provide a lower bound for $x_{v_{0}}$ and show that $v_0 \notin L$.

For a vertex $v$ of $G$, let $C_t(v)$ denote the family of $t$-cliques of $G$ that contains the vertex $v$, and for a subset $e \subseteq V(G)$, write $x^e:=\prod_{v \in e}x_v$.

\begin{lem}\label{2nd-ev}
$$x_{v_{0}}^{t-1}> \frac{1}{(t-1)!}\left(1-\frac{t-1/2}{r}\right)^{t-1}.$$
Moreover, $v_{0}\notin L$.
\end{lem}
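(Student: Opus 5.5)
The plan is to compare the eigen-equation at the maximum vertex $u_0$ with the lower bound on $\rho_t(G)$ from Lemma \ref{lowerbound}. From $\A_t(G)x^{t-1}=\rho_t(G)x^{[t-1]}$ at $u_0$ (with $x_{u_0}=1$) we have
$$\rho_t(G)=\sum_{e\in C_t(u_0)} x^{e\setminus\{u_0\}}.$$

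\textbf{Step 1: bound the right-hand side by $|W|$ and $x_{v_0}$.} Each term is a product of $t-1$ entries. We split the $t$-cliques through $u_0$ according to whether $e\setminus\{u_0\}$ meets $W$.
\begin{itemize}
\item If $e\setminus\{u_0\}\subseteq V(G)\setminus W$, the term is at most $x_{v_0}^{t-1}$. The number of such cliques is at most $\binom{n}{t-1}\le n^{t-1}/(t-1)!$.
\item If $e\setminus\{u_0\}$ meets $W$, the term is at most $1$. There are at most $|W|\binom{n}{t-2}\le \theta n\cdot n^{t-2}$ such cliques, using $|W|\le\theta n$ from Lemma \ref{Ni-Turan}(b).
\end{itemize}
Hence
$$\rho_t(G)\le \frac{n^{t-1}}{(t-1)!}x_{v_0}^{t-1}+\theta n^{t-1}.$$

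\textbf{Step 2: conclude the first inequality.} Combining Step 1 with Lemma \ref{lowerbound} gives
$$x_{v_0}^{t-1}\ge (t-1)!\Big(\binom{r-1}{t-1}r^{-(t-1)}-\theta\Big)-o(1).$$
The main obstacle is matching this with the stated constant $\frac{1}{(t-1)!}\big(1-\frac{t-1/2}{r}\big)^{t-1}$. We have
$$\binom{r-1}{t-1}r^{-(t-1)}=\frac{1}{(t-1)!}\prod_{j=1}^{t-1}\Big(1-\frac{j}{r}\Big),$$
so $(t-1)!\binom{r-1}{t-1}r^{-(t-1)}\ge \prod_{j=1}^{t-1}(1-j/r)$. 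It remains to check that
$$\prod_{j=1}^{t-1}\Big(1-\frac{j}{r}\Big)>\frac{1}{(t-1)!}\Big(1-\frac{t-1/2}{r}\Big)^{t-1}$$
with a gap larger than $(t-1)!\theta+o(1)$. Termwise, $1-j/r\ge 1-(t-1)/r>1-(t-1/2)/r$, so the product exceeds $\big(1-\frac{t-1/2}{r}\big)^{t-1}$. This leaves a positive margin depending only on $r,t$, of order at least $\frac{1}{2r}\big(1-\frac{t-1/2}{r}\big)^{t-2}$, which uses $r\ge t$ so the base is at least $\frac{1}{2r}$. Since $\theta<\frac{1}{20kr^4(r+1)}$, I would verify that $(t-1)!\theta$ is below this margin. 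If that fails for large $t$, I would sharpen Step 1 by counting the cliques through $u_0$ inside $G-W$ more precisely: $G[N(u_0)\setminus W]$ is nearly $(r-1)$-partite, so it contains only about $\binom{r-1}{t-1}(n/r)^{t-1}$ cliques of size $t-1$, not $\binom{n}{t-1}$. This gives $x_{v_0}^{t-1}\gtrsim 1-O(\theta)$, which is far stronger than the claim. I expect this refined count to be the honest route, and it is where the care with $W$, $L$ and the partition is needed.

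\textbf{Step 3: show $v_0\notin L$.} I would use the eigen-equation at $v_0$:
$$\rho_t(G)x_{v_0}^{t-1}=\sum_{e\in C_t(v_0)}x^{e\setminus\{v_0\}}\le c_t(v_0)\le \binom{d(v_0)}{t-1}.$$
The last bound needs refinement, since $N(v_0)$ is nearly $(r-1)$-partite after removing $W$. If $v_0\in L$, then $d(v_0)\le(1-\frac1r-\varepsilon_1)n$. The number of $(t-1)$-cliques in $N(v_0)$ is then at most roughly $\binom{r-1}{t-1}\big(\frac{d(v_0)}{r-1}\big)^{t-1}+O(\theta n^{t-1})$, by Theorem \ref{PengClq}-type counting or by the near-partite structure with the few $W$ vertices contributing $\theta n^{t-1}$. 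This is at most $\binom{r-1}{t-1}(n/r)^{t-1}(1-c\varepsilon_1)$. On the other side, the lower bound on $x_{v_0}$ gives $\rho_t(G)x_{v_0}^{t-1}$ at least about $\binom{r-1}{t-1}(n/r)^{t-1}x_{v_0}^{t-1}$. This needs $x_{v_0}^{t-1}$ close to $1$, which the refined Step 2 supplies. The comparison yields a contradiction, so $v_0\notin L$.
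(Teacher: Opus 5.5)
Your Steps 1--2 are sound and follow the paper's route: use the eigen-equation at $u_0$, split according to whether $e\setminus\{u_0\}$ meets $W$, and compare with Lemma~\ref{lowerbound}. The paper counts the main terms by $(n-|W|)^{t-1}$ instead of $\binom{n}{t-1}$, which is where the factor $\frac{1}{(t-1)!}$ in the statement comes from. It also uses $|W|\le |L|+k-1\le\varepsilon_2 n+k-1$ from Lemma~\ref{Ni-Turan}(c),(e). With your bound $|W|\le\theta n$ you can simply take $\theta$ small in terms of $r$, which is allowed, so large $t$ is no obstacle.

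Your proposed sharpening to $x_{v_0}^{t-1}\gtrsim 1-O(\theta)$ is false. Nothing prevents $u_0\in W$; in $K_{k-1}\vee T_r(n-k+1)$ the maximum entry sits on the $K_{k-1}$. Then $N(u_0)\setminus W$ meets all $r$ parts and carries about $\binom{r}{t-1}(n/r)^{t-1}$ cliques of size $t-1$, not $\binom{r-1}{t-1}(n/r)^{t-1}$. In that graph the eigen-equations give $x_{v_0}^{t-1}\to\binom{r-1}{t-1}/\binom{r}{t-1}=\frac{r-t+1}{r}$.

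This is fatal for Step 3, which is where the genuine gap lies. You bound every term $x^{e\setminus\{v_0\}}$ by $1$, getting $\rho_t(G)x_{v_0}^{t-1}\le c_{t-1}(G[N(v_0)])$. Combined with your count, this only yields $x_{v_0}^{t-1}\lesssim 1-c\varepsilon_1$. That is no contradiction, since $x_{v_0}^{t-1}$ may be only about $\frac{r-t+1}{r}$. Moreover, error terms of size $\theta n^{t-1}$ cannot be absorbed by the deficit $\Theta(\varepsilon_1 n^{t-1})$ coming from $d(v_0)\le(1-\frac1r-\varepsilon_1)n$, because $\varepsilon_1\ll\theta$.

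The missing idea is the paper's, in four steps:
\begin{enumerate}
\item For cliques with $(e\setminus\{v_0\})\cap(W\cup L)=\emptyset$, use $x^{e\setminus\{v_0\}}\le x_{v_0}^{t-1}$. This is valid because $x_{v_0}$ is the maximum off $W$, and it makes $x_{v_0}^{t-1}$ factor out of the main term.
\item Bound all remaining terms by $O(\varepsilon_2 n^{t-1})$ via $|W\cup L|\le 2\varepsilon_2 n+k$, with $\varepsilon_2\ll\varepsilon_1$.
\item Bound the pure cliques in $N_U(v_0)$ by $\binom{r-1}{t-1}\big(\frac{d(v_0)}{r-1}\big)^{t-1}+O(n^{t-2})$. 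The paper does this with Lemma~\ref{v-neigh} and Theorem~\ref{Gernb}; Theorem~\ref{PengClq} does not apply to a merely $kK_r$-free graph. Your near-partite count, using Lemma~\ref{v-indeg} on the vertices of $U$ plus Maclaurin's inequality, also works.
\item Divide by $x_{v_0}^{t-1}$. This division is why the first part of the lemma only needs a constant lower bound on $x_{v_0}^{t-1}$, not $x_{v_0}\approx 1$.
\end{enumerate}

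One caveat applies to either route. The paper makes the mixed cliques (those meeting $N_{V_1}(v_0)$) negligible by applying Lemma~\ref{v-indeg} to $v_0$. That lemma assumes $v_0\notin L$, which is exactly the case being refuted. From $v_0\notin W$ alone one only gets $d_{V_1}(v_0)<2\theta n$, which again gives an $O(\theta n^{t-1})$ error. So this step needs its own justification.
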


\begin{proof}
Since $x_{u_0}=\max\{x_{v}:v\in V(G)\}=1$,
the eigenvector equation of $\A_t(G)$ at $u_0$ yields
$$
 \begin{aligned}
\rho_{t}(G)x_{u_0}^{t-1}&=\sum_{e \in C_t(u_0)}x^{e \setminus \{u_0\}}\\
&=\sum_{e \in C_t(u_0), ~ |(e \setminus \{u_0\}) \cap W|=0}x^{e \setminus \{u_0\}}
+\sum_{j=1}^{t-1} \sum_{e \in C_t(u_0),~ |(e \setminus \{u_0\}) \cap W|=j}x^{e \setminus \{u_0\}}\\
&\leq  (n-|W|)^{t-1}x_{v_{0}}^{t-1}+\sum\limits_{j=1}^{t-1} |W|^{j}(n-|W|)^{t-1-j}.
\end{aligned}
$$
Hence,
 \begin{equation}\label{B-xv0}
 \begin{split}
 x_{v_{0}}^{t-1}&\geq  \frac{\rho_{t}(G)x_{u_0}^{t-1}-\sum\limits_{j=1}^{t-1} |W|^{j}(n-|W|)^{t-1-j}}{(n-|W|)^{t-1}} \\
 &\geq \frac{\rho_{t}(G)}{n^{t-1}}-\sum\limits_{j=1}^{t-1} |W|^{j}(n-|W|)^{-j}.
 \end{split}
 \end{equation}
By Lemma \ref{lowerbound},
 \begin{equation}\label{B-rhoG}
 \rho_{t}(G) \geq \binom{r-1}{t-1}\left(\frac{n}{r}\right)^{t-1}+
\Theta(n^{t-2}) > \frac{1}{(t-1)!}\left(1-\dfrac{t-1}{r}\right)^{t-1}n^{t-1}.
 \end{equation}
Moreover, by Lemma \ref{Ni-Turan} (c) and (e),
 \begin{equation}\label{UB-W}
 |W| = |W\cap L|+|W\setminus L| \leq |L| + k-1\leq \varepsilon_{2}n+k-1,
 \end{equation}
which implies that for sufficiently large $n$ and sufficiently small $\varepsilon_{2}$,
 \begin{equation}\label{Ser-W}
 \sum\limits_{j=1}^{t-1} |W|^{j}(n-|W|)^{-j}\leq
 (t-1)\frac{\varepsilon_{2}}{1-\varepsilon_{2}} + o(1).
 \end{equation}
Combining  (\ref{B-xv0})-(\ref{Ser-W}), we obtain
$$x_{v_{0}}^{t-1}> \frac{1}{(t-1)!}\left(1-\dfrac{t-1}{r}\right)^{t-1}-(t-1)\frac{\varepsilon_{2}}{1-\varepsilon_{2}} - o(1)> \frac{1}{(t-1)!}\left(1-\frac{t-1/2}{r}\right)^{t-1},$$
which yields the desired bound.

We now show that $v_0 \notin L$.
Applying the eigenvalue equation at $v_0$, we have
 \begin{equation}\label{ev-xv0}
 \begin{split}
\rho_{t}(G)x_{v_{0}}^{t-1}&=\sum_{e\in C_{t}(v_0)}x^{e \setminus \{v_0\}}=\sum_{\substack{e\in C_{t}(v_0),\\|(e \setminus \{v_0\})\cap (W\cup L)|=0}}x^{e \setminus \{v_0\}}
+\sum_{j=1}^{t-1}\sum_{\substack{e\in C_{t}(v_0),\\|(e \setminus \{v_0\})\cap (W\cup L)|=j}}x^{e \setminus \{v_0\}} \\
 &\leq \sum_{e \in C_{t-1}(G[N_{G}(v_{0})\setminus (W\cup L)])}x_{v_{0}}^{t-1}
+\sum\limits_{j=1}^{t-1} |(W\cup L)|^{j}(n-|(W\cup L)|)^{t-1-j}.
\end{split}
\end{equation}

Assume that $v_{0}\in V_{1}$ and set $U:=\bigcup_{i=2}^{r}V_{i}\setminus (W\cup L)$.
A $(t-1)$-clique $K_{t-1}$ is called
\textit{pure} if $V(K_{t-1}) \subset N_{U}(v_{0})$,
and \textit{mixed} if $V(K_{t-1}) \cap N_{V_{1}\setminus (W\cup L)}(v_{0}) \neq \varnothing$.
Let $c^{\mathrm{pure}}_{t-1}(v_{0})$ and $c^{\mathrm{mix}}_{t-1}(v_{0})$
denote the numbers of pure and mixed $(t-1)$-cliques, respectively.
Clearly,
\begin{equation}\label{clq-dec}
c_{t-1}(G[N_{G}(v_{0})\setminus (W\cup L)])= c^{\mathrm{pure}}_{t-1}(v_{0})+c^{\mathrm{mix}}_{t-1}(v_{0}).
\end{equation}
By Lemma \ref{v-neigh}, we know that $G[N_{U}(v_{0})]$ is $kK_{r}$-free.
Then, by Theorem \ref{Gernb},
 \begin{equation}\label{clq-pure}
c^{\mathrm{pure}}_{t-1}(v_{0})\leq c_{t-1}(K_{k-1}\vee T_{r-1}(d_{G}(v_{0})-k+1) )
=\dbinom{r-1}{t-1}\left(\frac{d_{G}(v_{0})}{r-1}\right)^{t-1}+O(n^{t-2}).
\end{equation}
By Lemma \ref{v-indeg}, we have $d_{V_{1}\setminus (W\cup L)}(v_{0})\leq (k-1)(r+1)$, and hence
\begin{equation}\label{clq-mix}
c^{\mathrm{mix}}_{t-1}(v_{0})\leq
\dbinom{d_{V_{1}\setminus (W\cup L)}(v_0)}a\sum\limits_{a=1}^{t-1}c_{t-1-a}(G[N_{U}(v_{0})])
=O(n^{t-2}).
\end{equation}
Meanwhile, Combining  (\ref{UB-W})  and  Lemma \ref{Ni-Turan} (c), we have $ |W\cup L|\leq  |W |+|L|\leq2\varepsilon_{2}n+k$.
Hence,
\begin{equation}\label{ser-WL}
\sum\limits_{j=1}^{t-1} |W\cup L|^{j}(n-|W\cup L|)^{t-1-j}= O(\varepsilon_2 n^{t-1}).
\end{equation}
Combining  (\ref{ev-xv0})-(\ref{ser-WL}), we obtain
$$\rho_{t}(G)x_{v_{0}}^{t-1}\leq \dbinom{r-1}{t-1}\left(\frac{d_{G}(v_{0})}{r-1}\right)^{t-1}x_{v_{0}}^{t-1}+O(n^{t-2})
+O(\varepsilon_2 n^{t-1}).
$$

If $v_{0}\in L$, then by definition, $d_{G}(v_{0}) \leq (1 - \tfrac{1}{r} - \varepsilon_{1})n$.
Therefore,
$$\dbinom{r-1}{t-1}\left(\frac{d_{G}(v_{0})}{r-1}\right)^{t-1}=
\dbinom{r-1}{t-1}\left(\frac{n}{r}\right)^{t-1}-\Theta(\varepsilon_{1}n^{t-1})+O(\varepsilon_{1}^{2}n^{t-1}).$$
Since $x_{v_{0}}^{t-1}=\Theta(1)$, we have
\begin{equation}\label{rho-contr}
\rho_{t}(G)x_{v_{0}}^{t-1}
\leq \dbinom{r-1}{t-1}\left(\frac{n}{r}\right)^{t-1}x_{v_{0}}^{t-1}-\left[\,\Theta(\varepsilon_{1})-O(\varepsilon_1^2)-O(\varepsilon_{2})\right] n^{t-1}+O(n^{t-2}).
\end{equation}
Since $\varepsilon_{2} \ll \varepsilon_{1}$, for sufficiently large $n$, we have
$$\rho_{t}(G)< \dbinom{r-1}{t-1}\left(\frac{n}{r}\right)^{t-1}.$$
This contradicts the lower bound given by Lemma \ref{lowerbound}.
Hence, $v_{0}\notin L$.
\end{proof}

With Lemmas \ref{v-neigh}, \ref{v-indeg}, and \ref{2nd-ev} in hand, we now prove that $L = \emptyset$.

\begin{lem}\label{LO}
 $L = \emptyset$.
\end{lem}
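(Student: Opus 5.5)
Suppose, to the contrary, that $L\neq\emptyset$. We take a vertex $w\in L$ and modify the graph around it: delete all edges of $G$ incident to $w$, and then join $w$ to every vertex of $N_{U'}(v_0)$. Here $v_0\in V_1$ is the vertex from Lemma \ref{2nd-ev}, and $U'$ is chosen so that $w$ acts like a ``copy'' of $v_0$ inside the Tur\'an-like part. Concretely, set
$$U':=\textstyle\bigcup_{i=2}^r V_i\setminus (W\cup L),$$
so that $w$ is placed into $V_1$ and adjacent to $N_{U'}(v_0)$. Call the resulting graph $G'$. We then need two things: that $G'$ is $kK_{r+1}$-free, and that $\rho_t(G')>\rho_t(G)$. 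Together these contradict the extremality of $G$.

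\textbf{$G'$ is $kK_{r+1}$-free.} Suppose $G'$ contains $k$ disjoint copies of $K_{r+1}$. Only one of them can use $w$, and its other $r$ vertices form an $r$-clique inside $N_{U'}(v_0)\subseteq \hat V_1$.
\begin{itemize}
\item If $v_0$ is not used by the other $k-1$ cliques, replace $w$ by $v_0$ in this clique. This yields $kK_{r+1}$ in $G$, a contradiction.
\item If $v_0$ is used by another clique, we instead re-embed that clique. By Corollary \ref{v-ed-clq} and the argument of Lemma \ref{v-neigh}, every $r$-clique lying in $\hat V_1\setminus(W\cup L)$ has a common neighborhood of size at least $n/r-O(\theta n)$. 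So $w$'s clique can be completed by some other common neighbor avoiding the at most $k(r+1)$ used vertices. This again gives $kK_{r+1}$ in $G$.
\end{itemize}
In short, the $r$-cliques inside $N_{U'}(v_0)$ all have huge common neighborhoods, so using $w$ never helps. (Alternatively one can note that $N_{U'}(v_0)$ is $kK_r$-free by Lemma \ref{v-neigh}, but the common-neighborhood argument is cleaner.)

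\textbf{Comparing spectral radii.} Take the positive eigenvector $x$ with $x_{u_0}=1$, and use the Rayleigh-type characterization (Lemma \ref{QiSpec}):
$$\rho_t(G')-\rho_t(G)\ge \frac{t}{\|x\|_t^t}\Big(\sum_{e\in C_t(G'),\,w\in e}x^{e\setminus\{w\}}-\sum_{e\in C_t(G),\,w\in e}x^{e\setminus\{w\}}\Big)x_w,$$
since cliques not containing $w$ are unchanged. The second sum equals $\rho_t(G)x_w^{t-1}$. We bound it from above exactly as in the proof that $v_0\notin L$: split the cliques by whether they meet $W\cup L$, apply Lemma \ref{v-neigh} and Theorem \ref{Gernb} to the neighborhood of $w$, and use $d(w)\le (1-\frac1r-\varepsilon_1)n$. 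This gives the upper bound
$$\binom{r-1}{t-1}\Big(\frac nr\Big)^{t-1}-\Theta(\varepsilon_1 n^{t-1})+O(\varepsilon_2 n^{t-1}).$$
A subtlety is that the in-part degree of $w$ is not controlled by Lemma \ref{v-indeg}, because $w$ may lie in $W$. However, such $w$ lie in $W\cap L$, and the estimate only loses $O(\varepsilon_2)$ from cliques meeting $W\cup L$, plus cliques using in-part neighbors. The latter I would handle by a max-cut style argument: cliques using in-part neighbors are at most those of a complete $r$-partite neighborhood of size $d(w)$.

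For the first sum we need a lower bound. Every $(t-1)$-clique in $N_{U'}(v_0)$ now completes a clique with $w$, and all of its vertices lie outside $W$, so each has entry at least the small values. Rather than bounding entries from below directly, I would write the first sum as the same sum that appears in the eigen-equation at $v_0$, minus the contributions from $W\cup L$ and from $V_1$-neighbors. This gives
$$\rho_t(G)x_{v_0}^{t-1}-O(\varepsilon_2 n^{t-1})-O(n^{t-2}).$$
Since $x_{v_0}^{t-1}$ is bounded below by Lemma \ref{2nd-ev}, this lower bound is at least
$$\rho_t(G)x_{v_0}^{t-1}-O(\varepsilon_2n^{t-1}).$$

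\textbf{Main obstacle.} The two bounds must be combined carefully. The first sum is at least roughly $\rho_t(G)x_{v_0}^{t-1}$, while the second is at most $\rho_t(G)x_w^{t-1}-\Theta(\varepsilon_1 n^{t-1})$ after normalization. So it suffices to know that $x_w\le x_{v_0}$. This holds when $w\notin W$, by the definition of $v_0$. When $w\in W\cap L$ this comparison is not automatic. In that case I would first show directly from the eigen-equation at $w$ and $d(w)$ small that $x_w^{t-1}\le x_{v_0}^{t-1}(1-\Theta(\varepsilon_1))$. This bound comes from the same estimate as above, using $\rho_t(G)\ge\binom{r-1}{t-1}(n/r)^{t-1}$. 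With $x_w\le x_{v_0}$ in hand, the difference is positive because $\varepsilon_2\ll\varepsilon_1$. Hence $\rho_t(G')>\rho_t(G)$, contradicting maximality, and so $L=\emptyset$.
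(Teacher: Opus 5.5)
Your strategy is the paper's: delete the edges at $w\in L$, reattach $w$ inside $\bigcup_{i\ge2}V_i\setminus(W\cup L)$, and compare Rayleigh quotients using the eigen-equations at $v_0$ and at $w$. The only change is the target set. The paper uses $\bigcup_{i\ge2}I_i$, so $w$ lies in no $K_{r+1}$ of $G'$ and freeness is immediate. You use $N_{U'}(v_0)$, which makes the lower bound exact (no $S_3$ term) at the cost of a freeness argument; your common-neighborhood argument for freeness is correct.

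Your final combination, however, does not work as written. The second sum \emph{equals} $\rho_t(G)x_w^{t-1}$, so $x_w\le x_{v_0}$ alone only gives a difference $\ge -O(\varepsilon_2 n^{t-1})$. What is needed, for every $w\in L$ and not only $w\in W\cap L$, is the quantitative deficit you propose for $W\cap L$. This is exactly the paper's \eqref{rho-xu-2}: $\rho_t(G)x_w^{t-1}\le\binom{r-1}{t-1}(n/r)^{t-1}x_{v_0}^{t-1}-\Theta(\varepsilon_1 n^{t-1})+O(\varepsilon_2 n^{t-1})$, obtained by bounding every non-$W$ entry by $x_{v_0}$ in the eigen-equation at $w$. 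Your displayed upper bound omits the factor $x_{v_0}^{t-1}$. That matters, because $x_{v_0}$ may be strictly smaller than $1$, e.g.\ when $u_0\in W$.

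The genuine gap is in proving that inequality. It requires $c_{t-1}(G[N(w)\setminus(W\cup L)])\le\binom{r-1}{t-1}(n/r)^{t-1}-\Theta(\varepsilon_1 n^{t-1})$. For $v_0$, the cliques using neighbors in the vertex's own part were controlled only through Lemma~\ref{v-indeg}. That lemma concerns vertices outside $W\cup L$, so it applies to \emph{no} $w\in L$, not just to those in $W$ as you say. For $w\in L\setminus W$ you only have $d_{V_i}(w)<2\theta n$.

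When $t\ge3$ and $d(w)$ is close to $(1-\frac1r)n$, this is not enough. An in-part neighbor lies in about $\binom{r-1}{t-2}(n/r)^{t-2}$ of the relevant $(t-1)$-cliques, against about $\binom{r-2}{t-2}(n/r)^{t-2}$ for a neighbor in another part. So the excess can reach order $\theta n^{t-1}$, which swamps the $\Theta(\varepsilon_1 n^{t-1})$ gain since $\varepsilon_1\ll\theta$.

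Your max-cut fix is too weak. A complete $r$-partite neighborhood on $d\approx(1-\frac1r)n$ vertices contains about $\frac{r}{r-t+1}\bigl(1-\frac1r\bigr)^{t-1}\binom{r-1}{t-1}(n/r)^{t-1}$ copies of $K_{t-1}$, and this factor is a constant larger than $1$ for every $t\ge3$. Only for $t=2$, where the count is just $d(w)$, does the estimate go through.

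The paper's proof has the same hole. It gets \eqref{rho-xu-2} by ``arguing as in'' \eqref{rho-contr}, and \eqref{rho-contr} itself applies Lemma~\ref{v-indeg} to $v_0$ while assuming $v_0\in L$. So for $t\ge3$ you need an additional ingredient that bounds the $(t-1)$-clique count in $N(w)$ for $w\in L$. In particular, it must control the in-part neighborhoods of $L$-vertices of nearly full degree; an appeal to the $v_0\notin L$ computation is not enough.
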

\begin{proof}

Suppose, to the contrary, that there is a vertex $u\in L$.
Let $x_{u_0}=\max\{x_{v}:v\in V(G)\}=1$ and $x_{v_0}=\max\{x_{v} : v \in V(G)\setminus W\}$.
By Lemma \ref{2nd-ev}, we may assume without loss of generality that $v_{0}\in V_{1}\setminus (W\cup L)$.

By Lemma \ref{Ni-Turan} (d), for each $i\in[r]$, there exists an independent set $I_{i}\subseteq V_{i}\setminus (W\cup L)$ such that
$|I_{i}|\geq |V_{i}\setminus (W\cup L)|-2(k-1)$.
Construct a graph $G'$ from $G$ by deleting all edges incident to $u$ and then joining $u$ to every vertex in $\bigcup_{i=2}^{r}I_{i}$.
Clearly, $G'$ is $kK_{r+1}$-free.

We show that $\rho_{t}(G')> \rho_{t}(G)$.
Partition the family $C_t(v_0)$ into four classes:
\begin{align*}
\mathcal{S}_1 &= \left\{ e \in C_{t}(v_0): (e \setminus \{v_0\}) \cap (W \cup L) \neq \emptyset \right\}, \\
 \mathcal{S}_2 &= \left\{ e \in C_{t}(v_0):(e \setminus \{v_0\}) \cap (W \cup L) = \emptyset, (e \setminus \{v_0\}) \cap V_1 \neq \emptyset \right\}, \\
\mathcal{S}_3 &= \left\{ e \in C_{t}(v_0):
  (e \setminus \{v_0\}) \subseteq V(G) \setminus (V_{1} \cup W \cup L), (e \setminus \{v_0\}) \cap (\textstyle\bigcup_{i=2}^{r} (V_{i} \setminus I_{i})) \neq \emptyset \right\}, \\
 \mathcal{S}_4 &= \left\{ e \in C_{t}(v_0): (e \setminus \{v_0\}) \subseteq \textstyle\bigcup_{i=2}^{r} I_{i} \right\}.
\end{align*}
Write $S_i = \sum_{e  \in \mathcal{S}_i}x^{e \setminus \{v_0\}}$ for $i=1,2,3,4$.
Then
\begin{equation}\label{ev-xv0-dec}
\rho_{t}(G)x_{v_{0}}^{t-1}=\sum_{e \in C_{t}(v_0)}x^{e \setminus \{v_0\}}=S_1+S_2+S_3+S_4,
\end{equation}

We estimate each term.
Since $ |W\cup L|\leq2\varepsilon_{2}n+k$,
\begin{equation}\label{S1-cont}
S_1 = \sum_{e  \in \mathcal{S}_1 }x^{e \setminus \{v_0\}}
\leq \sum_{j=1}^{t-1} |W\cup L|^{j} n^{t-1-j}= O(\varepsilon_2 n^{t-1}).
\end{equation}
By Lemma \ref{v-indeg}, $d_{V_{1}\setminus (W\cup L)}(v)\leq (k-1)(r+1)$ for any $v\in V_{1}\setminus (W\cup L)$.
Thus,
$$
S_2 = \sum_{e \in \mathcal{S}_2}x^{e \setminus \{v_0\}}
\leq\sum_{j=1}^{t-1} d_{V_{1}\setminus (W\cup L)}(v_{0})^{j}
n^{t-1-j}= O(n^{t-2}).
$$
For each $i\in[r]$, Lemma \ref{Ni-Turan} (d) gives $|V_{i}\setminus (W\cup L)|-|I_{i}|\leq 2(k-1)$.
So,
$$
S_3 = \sum_{e  \in \mathcal{S}_3}x^{e \setminus \{v_0\}}
\leq \sum_{j=1}^{t-1} (2(k-1))^{j} n^{t-1-j}= O(n^{t-2}).
$$
Therefore, from \eqref{ev-xv0-dec},
$$S_4 \ge \rho_{t}(G)x_{v_{0}}^{t-1}-O(\varepsilon_2 n^{t-1}) - O(n^{t-2}).$$
In $G'$, $u$ is adjacent to all vertices of $\bigcup_{i=2}^{r} I_i$ in $G'$,
so, combining Lemma \ref{lowerbound},
\begin{equation}\label{G'-S4}
\begin{split}
\sum_{e \in \binom{\bigcup_{i=2}^{r} I_{i}}{t-1}, e \cup \{u\}\in C_{t}(G^{\prime})}x^e
& \geq S_4 \ge \rho_{t}(G)x_{v_{0}}^{t-1}-O(\varepsilon_2 n^{t-1}) - O(n^{t-2})\\
& \geq \binom{r-1}{t-1}\left(\frac{n}{r}\right)^{t-1}x_{v_{0}}^{t-1}-O(\varepsilon_2 n^{t-1})-O(n^{t-2}).
\end{split}
\end{equation}

On the other hand, arguing as in  (\ref{rho-contr}), we obtain
\begin{equation}\label{rho-xu-2}
\rho_{t}(G)x_{u}^{t-1}
\leq\dbinom{r-1}{t-1}\left(\frac{n}{r}\right)^{t-1}x_{v_{0}}^{t-1}-\left[\,\Theta(\varepsilon_{1})-O(\varepsilon_1^2)-O(\varepsilon_{2})\right] n^{t-1}+
O(n^{t-2}).
\end{equation}
Since
$$\rho_{t}(G)x_{u}^{t-1}=\sum_{e \in \binom{N_{G}(u)}{t-1},e \cup \{u\}\in C_{t}(G)}x^e.
$$
combining  (\ref{G'-S4}) and  (\ref{rho-xu-2}), we obtain
 $$
 \begin{aligned}
\|x\|_t^t\left(\rho_{t}(G^{\prime})-\rho_{t}(G)\right)&\geq x^\top\A_t(G^{\prime})x^{t-1}-x^\top\A_t(G)x^{t-1}\\
 &=tx_{u}\bigg(\sum_{e \in \binom{\bigcup_{i=2}^{r} I_{i}}{t-1},e \cup \{u\}\in C_{t}(G^{\prime})}x^e
-\sum_{e \in \binom{N_{G}(u)}{t-1}, e \cup \{u\}\in C_{t}(G)}x^e\bigg) \\
&\geq  tx_{u}\left(\left( \Theta(\varepsilon_{1}) - O(\varepsilon_{1}^2) - O(\varepsilon_{2}) \right)n^{t-1} - O(n^{t-2})\right) \\
&>0,
\end{aligned}
$$
where the last inequality holds since $\varepsilon_{2} \ll \varepsilon_{1}$ and $n$ is sufficiently large.
Hence, $\rho_{t}(G^{\prime})>\rho_{t}(G)$, contradicting the extremality of $G$.
Therefore, $L$ must be empty.
\end{proof}

For a general vertex $v$ of $G$, we now derive a uniform lower bound on $x_v$, independent of $n$.

\begin{lem}\label{ev}
For every $v\in V(G)$, $$x_{v}^{t-1}\geq \frac{1}{(t-1)!}\left(1-\frac{t-1/3}{r}\right)^{t-1}.$$
\end{lem}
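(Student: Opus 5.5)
The plan is to compare $x_v$ with $x_{v_0}$ through a vertex-replacement argument, in the spirit of Lemma~\ref{LO}, and then invoke the lower bound on $x_{v_0}$ from Lemma~\ref{2nd-ev}.

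Suppose, for contradiction, that some vertex $v$ satisfies
$$x_v^{t-1}<\frac{1}{(t-1)!}\Bigl(1-\frac{t-1/3}{r}\Bigr)^{t-1}.$$
By Lemma~\ref{2nd-ev}, $x_{v_0}^{t-1}$ exceeds the analogous quantity with $t-1/2$ in place of $t-1/3$. So $x_v^{t-1}\le x_{v_0}^{t-1}-c$ for a constant $c>0$ depending only on $r,t$.

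As in Lemma~\ref{LO}, take $v_0\in V_1\setminus(W\cup L)$; this is possible by Lemma~\ref{2nd-ev} and the fact that $L=\emptyset$ from Lemma~\ref{LO}. Let $v\in V_j$, and choose an index $i\ne 1$. Build $G'$ from $G$ by deleting all edges at $v$ and joining $v$ to every vertex of $\bigcup_{s\ne i} I_s$, where the $I_s$ are the independent sets of Lemma~\ref{Ni-Turan}(d). The neighborhood of $v$ in $G'$ is then $(r-1)$-partite, so $v$ lies in no $K_{r+1}$ of $G'$. Since $G-v$ is $kK_{r+1}$-free, $G'$ is $kK_{r+1}$-free.

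The first key estimate is the new contribution at $v$. Decompose $C_t(v_0)$ into the classes $\mathcal S_1,\dots,\mathcal S_4$ exactly as in Lemma~\ref{LO}. The bounds $S_1=O(\varepsilon_2 n^{t-1})$ and $S_2,S_3=O(n^{t-2})$ carry over, so
$$\sum_{e}x^e\ \ge\ S_4\ \ge\ \rho_t(G)x_{v_0}^{t-1}-O(\varepsilon_2 n^{t-1})-O(n^{t-2}),$$
where the sum runs over the new $(t-1)$-cliques $e$ completing $v$ in $G'$. (If $i$ would interfere, one instead takes $I_s$ for $s\ne 1$, matching $\mathcal S_4$ directly.)

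The second key estimate is the old contribution at $v$, which is exactly $\rho_t(G)x_v^{t-1}$ by the eigen-equation. Therefore
$$x^\top\mathcal A_t(G')x^{t-1}-x^\top\mathcal A_t(G)x^{t-1}\ \ge\ t\,x_v\Bigl(\rho_t(G)\bigl(x_{v_0}^{t-1}-x_v^{t-1}\bigr)-O(\varepsilon_2n^{t-1})-O(n^{t-2})\Bigr).$$
By Lemma~\ref{lowerbound}, $\rho_t(G)=\Theta(n^{t-1})$, and $x_{v_0}^{t-1}-x_v^{t-1}\ge c$. Since $\varepsilon_2\ll c$, the right-hand side is positive for large $n$. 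Lemma~\ref{QiSpec} then gives $\rho_t(G')>\rho_t(G)$, contradicting extremality.

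I expect the main obstacle to be a subtlety about where the new cliques at $v$ live. The argument needs every $(t-1)$-clique in $\bigcup_{s\ge2}I_s$ counted in $S_4$ to be a clique not containing $v$, so that it is unaffected by deleting $v$'s edges. It also needs the new $t$-cliques at $v$ to be counted with the same weights $x^e$ as in $G$. Both hold because only edges at $v$ change. The remaining care is the case $v\in\bigcup_{s\ge2}I_s$ itself. There the cliques of $S_4$ that contain $v$ have to be discarded, but they contribute at most $O(n^{t-2})$, so the bound survives.
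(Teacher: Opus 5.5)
Your argument is the paper's: delete the edges at $v$, join $v$ to $\bigcup_{s\ge 2}I_s$, bound the new clique sum at $v$ from below by $S_4$ and hence, via the eigen-equation at $v_0$, by $\rho_t(G)x_{v_0}^{t-1}$ minus small errors, and beat the old sum $\rho_t(G)x_v^{t-1}$ using the constant gap from Lemma~\ref{2nd-ev}. Your extra care about $v\in\bigcup_{s\ge 2}I_s$ and about $kK_{r+1}$-freeness is welcome, and since $L=\emptyset$ you could take $S_1=O(n^{t-2})$, as the paper does, instead of invoking $\varepsilon_2\ll c$. The one slip is your first-stated choice ``$i\ne 1$'': the new neighbourhood must be $\bigcup_{s\ne 1}I_s$, as in your parenthetical fallback. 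With $i\ne 1$, the cliques of $\mathcal S_4$ meeting $I_i$ are lost (when $t=r$ that is all of $\mathcal S_4$), and the gained cliques through $I_1$ are not controlled by the eigen-equation at $v_0$.
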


\begin{proof}
Suppose, to the contrary, that there exists a vertex $u\in V (G)$ such that
$$x_{u}^{t-1}< \frac{1}{(t-1)!}\left(1-\frac{t-1/3}{r}\right)^{t-1}.$$
Proceeding as in the proof of Lemma \ref{LO}, for each $i \in [r]$, let $I_i \subseteq V_i \setminus (W \cup L)$ be an independent set with $|I_i| \ge |V_i \setminus (W \cup L)|-2(k-1)$.
Define a new graph $G'$ by deleting all edges incident to $u$ and then joining $u$ to every vertex in $\bigcup_{i=2}^{r}I_{i}$.
Clearly, $G'$ is $kK_{r+1}$-free.
Let $x_{v_0}=\max\{x_{v} : v \in V(G)\setminus W\}$.
Since $L=\emptyset$ by Lemma \ref{LO}, Lemma \ref{Ni-Turan}(e) yields $|W\cup L|=|W| \le k-1$.
Thus, all error terms involving $W$ improve; the inequality in \eqref{S1-cont} is $S_1=O(n^{t-2})$, and the inequality (\ref{G'-S4}) can be improved to
\begin{equation}\label{imp}
\sum_{e \in \binom{\bigcup_{i=2}^{r} I_{i}}{t-1}, ~ e \cup \{u\}\in C_{t}(G^{\prime})}x^e \geq
\rho_{t}(G)x_{v_{0}}^{t-1}-O(n^{t-2}).
\end{equation}

By Lemma \ref{2nd-ev} we have
$x_{v_{0}}^{t-1}> \frac{1}{(t-1)!}\left(1-\frac{t-1/2}{r}\right)^{t-1}$, and hence
$x_{v_{0}}^{t-1} - x_u^{t-1} = \Theta(1)$.
Using the eigenvector identity and arguing as before, and combining \eqref{imp}, we have
$$
\begin{aligned}
\|x\|_t^t\left(\rho_{t}(G^{\prime})-\rho_{t}(G)\right)&\geq x^\top\A_t(G^{\prime})x^{t-1}-x^\top\A_t(G)x^{t-1}\\
&=tx_{u}\bigg(\sum_{e \in \binom{\bigcup_{i=2}^{r} I_{i}}{t-1}, e \cup \{u\}\in C_{t}(G^{\prime})}x^e
-\sum_{e \in \binom{N_{G}(u)}{t-1}, e \cup \{u\}\in C_{t}(G)}x^e\bigg)\\
&\geq tx_{u}\bigg(\sum_{e \in \binom{\bigcup_{i=2}^{r} I_{i}}{t-1},e \cup \{u\}\in C_{t}(G^{\prime})}x^e
-\rho_{t}(G)x_{u}^{t-1}\bigg)\\
&\geq tx_{u}\left(\rho_{t}(G)(x_{v_{0}}^{t-1}-x_{u}^{t-1}) - O(n^{t-2})\right).
\end{aligned}
$$
By Lemma \ref{lowerbound}, $\rho_t(G) \ge \Theta(n^{t-1})$, and thus the right-hand side is positive for sufficiently large $n$, implying that
$\rho_t(G') > \rho_t(G)$, a contradiction.
\end{proof}

The next lemma shows that $W$ has size $k-1$ and $G-W$ is $r$-partite.
Recall that $\mu(G)$ denotes the matching number of a graph $G$.

\begin{lem}\label{W}
$|W| = k-1$, and $V_{i}\setminus W$ is an independent set for each $i \in [r]$.
\end{lem}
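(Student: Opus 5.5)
We argue in three stages: first that $G-W$ contains no edge inside any $V_i$, then that $|W|\ge k-1$, and finally that $|W|\le k-1$ is already known. Since $L=\emptyset$ by Lemma \ref{LO}, Lemma \ref{Ni-Turan}(e) gives $|W|=|W\setminus L|\le k-1$, so the upper bound is free.

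For the first stage, let $E_0:=\bigcup_{i=1}^r E(G[V_i\setminus W])$, the set of \emph{inner edges} of $G-W$. By Lemma \ref{v-indeg}, every vertex is incident to at most $(k-1)(r+1)$ edges of $E_0$. We claim that $\mu(E_0)\le k-1-|W|$. Suppose there is a matching $u_1v_1,\dots,u_mv_m$ in $E_0$ with $m=k-|W|$. Using the argument of Lemma \ref{v-indeg} (more precisely, Corollary \ref{v-ed-clq}), each edge $u_jv_j$ inside some $V_i\setminus W$ extends to a $K_{r+1}$ by greedily choosing one common neighbour in each of the other $r-1$ parts. Since the common neighbourhoods have linear size, we can make these choices disjoint, and also disjoint from the vertices used for the $W$-vertices. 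Each $w\in W$ has $d_{V_i}(w)\ge 2\theta n$, and $w\notin L$; from this we want $w$ to lie in a $K_{r+1}$ with $r$ vertices outside $W$. We find such a clique by choosing a neighbour of $w$ inside its own part $V_i$ together with common neighbours of these two vertices in the other parts; the common neighbourhoods are still of linear size because $w$ misses only $O(\theta n)$ vertices of the other parts. Altogether this produces $m+|W|=k$ disjoint copies of $K_{r+1}$, a contradiction. Lemma \ref{chv-md} then gives $|E_0|\le ((k-1)(r+1)+1)(k-1)=O(1)$.

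Next we show that $E_0=\emptyset$ and $|W|=k-1$ together, by an eigenvector perturbation. Suppose $E_0\neq\emptyset$ or $|W|<k-1$. Delete all edges of $E_0$; this destroys only $O(n^{t-2})$ $t$-cliques, each with weight $O(1)$, so it costs $O(n^{t-2})$ in $x^\top\A_t x^{t-1}$. Then add edges to obtain a graph $G'$ in which some vertex is joined to everything: we pick a vertex $u\notin W$ and join it to all vertices. After the deletion of $E_0$, the graph $G-W$ is $r$-partite, so with $W\cup\{u\}$ of size at most $k-1$ this new graph is a subgraph of $K_{k-1}\vee(r\text{-partite})$, which is $kK_{r+1}$-free. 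This works when $|W|<k-1$. If $|W|=k-1$ and $E_0\neq\emptyset$, we instead just delete $E_0$ and make $W$ complete to everything, which is still $kK_{r+1}$-free. The gain from this modification is of order $x_u\cdot n^{t-2}$: the new cliques containing $u$ together with one vertex of its own part number $\Theta(n^{t-1})$, and they have weight bounded below by Lemma \ref{ev}. This must beat the loss.

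The main obstacle is exactly this comparison, since the loss from deleting $E_0$ is also $O(n^{t-2})$ and we need explicit constants. I would handle it by bounding the loss per deleted edge more carefully. An edge $ab$ of $E_0$ lies in at most about $\binom{r-1}{t-2}(n/r)^{t-2}$ $t$-cliques. The $kK_{r+1}$-freeness argument above really shows that the endpoints of $E_0$ have small degree into their own part, so we should delete the edges one at a time. For each single edge $ab\in E_0$, I would instead modify $G$ by removing $ab$ and joining $a$ to all of its own part $V_i\setminus W$, which makes $a$ behave as a $W$-vertex. This is allowed precisely when $|W|<k-1$, and it gains $\Theta(n^{t-1})$ cliques, which beats the loss. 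This forces $|W|=k-1$ whenever $E_0\ne\emptyset$.

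Once $|W|=k-1$ and $E_0\neq\emptyset$, the matching bound gives $\mu(E_0)\le 0$, i.e.\ $E_0=\emptyset$, which is a contradiction. It remains to check that $|W|=k-1$ also holds when $E_0=\emptyset$. For this we use the join modification from the previous step: if $|W|<k-1$, joining $u\notin W$ to everything produces $\Theta(n^{t-1})$ new cliques, contradicting maximality.
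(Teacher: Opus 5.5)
Your matching bound $\mu(E_0)\le k-1-|W|$ and the resulting $|E_0|=O(1)$ via Chv\'atal--Hanson match the paper's first steps. However, the argument you finally use for the case $E_0\neq\emptyset$, $|W|<k-1$ has a genuine gap. There you remove one edge $ab\in E_0$ and join $a$ to all of $V_i\setminus W$, asserting that the result stays $kK_{r+1}$-free ``precisely when $|W|<k-1$''. It need not, because the remaining edges of $E_0$ survive. Take $k-1-|W|=1$ and $E_0$ a triangle inside $V_i$; whichever endpoint you call $a$, the opposite edge survives. By your own matching argument that edge extends to one $K_{r+1}$, the vertices of $W$ give $|W|=k-2$ more, and $a$ now has linearly many neighbours in its own part, so it gives one more disjoint copy: $k$ in total. (Joining $a$ to all of $V_i\setminus W$ also re-adds $ab$.)

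The detour came from a miscount. The gain of your join construction is not of order $x_u n^{t-2}$ but $\Omega(n^{t-1})$. The vertex $u\notin W$ acquires $\Theta(n)$ new neighbours in its own part. Each new edge lies in $\Omega(n^{t-2})$ new $t$-cliques completed by crossing edges (argument of \eqref{S-neigh}), and each has weight at least $c_0^t$ by Lemma~\ref{ev}. The loss from deleting $E_0$ is at most $t|E_0|n^{t-2}=O(n^{t-2})$, so no explicit constants are needed.

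The repair is already in your text, and it is what the paper does. Your first construction (delete all of $E_0$, join one $u\notin W$ to every vertex) works for every $|W|<k-1$, whether or not $E_0$ is empty. Indeed, $G'-(W\cup\{u\})$ is $r$-partite, so every $K_{r+1}$ meets a set of at most $k-1$ vertices, and the gain $\Omega(n^{t-1})$ beats the loss $O(n^{t-2})$. The paper instead joins a set $S\subseteq V_1\setminus W$ of size $k-1-|W|$ to $V_1\setminus(W\cup S)$; one vertex suffices. This gives $|W|=k-1$ directly, after which $\mu(E_0)\le 0$ gives $E_0=\emptyset$. Your case distinctions then become unnecessary, including the modification ``delete $E_0$ and make $W$ complete'', which has no guaranteed gain since $W$ may already be complete to $V(G)$.

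A smaller point concerns the matching step. A vertex $w\in W\cap V_i$ need not miss only $O(\theta n)$ vertices of the other parts: if $d_{V_i}(w)\approx n/(2r)$, it may miss $\Theta(n)$ of them. What does hold, from the max-cut choice of the partition ($d_{V_j}(w)\ge d_{V_i}(w)$) together with $d(w)\ge(1-\frac1r-\varepsilon_1)n$, is $d_{V_j}(w)\ge n/(3r)$ for every $j\ne i$. So pick the neighbour of $w$ in $V_i\setminus W$ first, then extend greedily through vertices outside $W$, each of which misses only $O(\theta n)$ vertices of the other parts. The paper itself only says ``applying the same argument again'' at this point.
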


\begin{proof}
Since $L=\emptyset $ by Lemma \ref{LO}, Lemma \ref{Ni-Turan}(e) yields $|W| \leq k-1$.
We first prove that
\begin{equation}\label{match}
\mu(\cup_{i=1}^{r}G[V_{i} \setminus W]) \leq k-1-|W|.
\end{equation}
Suppose, to the contrary, that there exists a matching $M$ of size $k-|W|$ in $\bigcup_{i=1}^{r}G[V_{i} \setminus W]$.
Since $L$ is empty, we have $\delta(G) \geq \left(1 - \frac{1}{r} - \varepsilon_1\right)n$.
Using the same neighborhood-expansion argument as in the proof of \eqref{S-neigh}, we process the edges of $M$ one by one.
For each edge $e = xy \in M$, suppose that $e \subseteq V_i \setminus W$ for some $i$.
We can find a copy of $K_{r+1}$ in the current graph that contains $x,y$ and exactly one vertex from each $V_{j} \setminus W$ (with $j \neq i$), and then delete the vertices of this $K_{r+1}$ from the graph.
We repeat this procedure (each time working on the remaining graph) until we have constructed $k-|W|$ vertex-disjoint copies of $K_{r+1}$, say $C_{1},\dots,C_{k-|W|}$, all disjoint from $W$.

Let $G'$ be the remaining graph, i.e. the graph from $G$ by deleting all vertices of the above cliques.
We now construct $|W|$ additional vertex-disjoint copies of $K_{r+1}$ in $G'$, each containing a distinct vertex of $W$.
Let $V(G') = V_{1}' \cup \cdots \cup V_{r}'$ according to the partition of $G$.
Take any vertex $w \in W$, say $w \in V_{1}'$.
Since $d_{V_{1}'}(w) \geq 2\theta n-(k-|W|)(r+1)$ and $|W| \le k-1$,
the vertex $w$ has a neighbor $v$ in $V_{1}' \setminus W$.
Moreover,
$$\delta(G') \geq \left(1 - \frac{1}{r} - \varepsilon_1\right)n-(k-|W|)(r+1).$$
Applying the same argument again, we can find a $K_{r+1}$ in $G'$ containing $w$ and $v$.
Delete the vertices of this $K_{r+1}$ and repeat the procedure for all vertices of $W$.
Then, we obtain $|W|$ additional vertex-disjoint copies of $K_{r+1}$, each of which contains a vertex from $W$ and is vertex-disjoint from the cliques $C_{1},\ldots,C_{k-|W|}$.
Altogether, $G$ has $k$ vertex-disjoint copies of $K_{r+1}$, a contradiction.
Thus \eqref{match} holds.

Next, we pay attention to show that $|W|= k-1$.
Suppose that $|W|< k-1$.
Let
$$\Delta:=\Delta(\cup_{i=1}^{r}G[V_{i} \setminus W]), ~ m:=\mu(\cup_{i=1}^{r}G[V_{i} \setminus W]).$$
By Lemma \ref{v-indeg}, $\Delta \leq (k-1)(r+1)$, and by \eqref{match}, $m \leq k-1-|W|$.
Thus, by Lemma \ref{chv-md},
\begin{eqnarray}\label{edge-ViW}
e(\cup_{i=1}^{r}G[V_{i} \setminus W])\leq f(m,\Delta)\leq \Delta m + m\leq (kr+k-r)(k-1-|W|).
\end{eqnarray}

We construct a new graph $G''$ with the same vertex set as $G$ as follows.
Choose a set $S \subseteq V_{1} \setminus W$ with $|S| = k-1-|W|$.
and let $E^{*}$ be the edge set of the complete bipartite graph between $S$ and $V_{1} \setminus (W\cup S)$.
The graph $G''$ is obtained from $G$ by removing all edges inside $V_{i} \setminus W$ for $ i \in [r]$ and adding all edges of $E^*$.
Since any $K_{r+1}$ in $G''$ must contain one vertex from $W\cup S$, and $|W\cup S|=k-1$, it follows that $G''$ contains at most $k-1$ vertex-disjoint copies of $K_{r+1}$. Thus $G''$ is $kK_{r+1}$-free.

We now prove that $\rho_{t}(G'')> \rho_{t}(G)$.
Let $C_{t}^{\mathrm{in}}(G)$ be the set of $t$-cliques in $G$ containing at least one edge from $\bigcup_{i=1}^{r} G[V_i \setminus W]$.
Note that for any edge $uv$ in $\bigcup_{i=1}^{r} G[V_{i} \setminus W]$,
 the number of $t$-cliques containing $uv$ is at most $n^{t-2}$.
Since $e( \bigcup_{i=1}^{r} G[V_{i} \setminus W] ) = O(1)$ by \eqref{edge-ViW}, we obtain
$$|C_{t}^{\mathrm{in}}(G)|
\leq e\left( \cup_{i=1}^{r} G[V_{i} \setminus W] \right) \cdot n^{t-2} = O(n^{t-2}).$$

On the other hand, let $C_{t}^{*}(G'')$ be the set of $t$-cliques in $G''$ which contain at least one edge from $E^{*}$.
Since $L = \emptyset$, we have $\delta(G) \geq \left(1-\frac{1}{r}-\varepsilon_{1}\right)n.$
Combining (\ref{edge-ViW}), we have
$$\delta(G'') \geq \delta(G) - (kr+k-r)(k-1-|W|) \geq \left(1-\frac{1}{r}-\varepsilon_{1}\right)n - O(1).$$
Thus, by the same argument as in the proof of inequality (\ref{S-neigh}),  for every edge $uv \in E^{*}$
the induced subgraph $G[N_{G''}(u) \cap N_{G''}(v)]$ contains a complete $(r-1)$-partite subgraph with each parts of size at least $\frac{n}{r+1}$.
Consequently, each such edge $uv$ lies in at least
$(\frac{n}{r+1})^{t-2}$ distinct $t$-cliques.
Since $|E^{*}| = |S||(V_{1} \setminus (W\cup S)| \geq \frac{n}{r+1}$, we obtain
$$|C_{t}^{*}(G'')|
\geq |E^{*}| \cdot \left(\frac{n}{r+1}\right)^{t-2}
= \Omega(n^{t-1}).$$

Let $x$ be a positive eigenvector of $\A_t(G)$ corresponding to the spectral radius $\rho_t(G)$.
We normalize the eigenvector so that $\max\{x_{v}: v \in V(G)\} = 1$.
By Lemma \ref{ev}, there exists a constant $c_0>0$ such that $x_{v} \geq c_{0}$ for all $v \in V(G)$.
Thus,
$$\sum_{e \in C_{t}^{*}(G'')} x^e
\geq c_{0}^{\,t} |C_{t}^{*}(G'')| = \Omega(n^{t-1}), ~ \sum_{e \in C_{t}^{\mathrm{in}}(G)} x^e \leq |C_{t}^{\mathrm{in}}(G)| = O(n^{t-2}).$$
Therefore, for sufficiently large $n$,
$$
\begin{aligned}
\|x\|_t^t\left(\rho_{t}(G')-\rho_{t}(G)\right)
&\geq x^{T}\A_t(G'')x^{t-1}-x^{T}\A_t(G)x^{t-1}\\
&=t\ \Bigg(\sum_{e \in C_{t}^{*}(G'')}x^e
-\sum_{e \in C_{t}^{\mathrm{in}}(G)}x^e\Bigg)\\
& \ge t \left( \Omega(n^{t-1}) - O(n^{t-2}) \right)\\
& > 0,
\end{aligned}
$$
which contradicts the extremality on $G$.
Thus, $|W| = k-1$.
Finally, \eqref{match} implies that $\mu(\cup_{i=1}^{r}G[V_{i} \setminus W]) =0$, and hence each $V_{i}\setminus W$ is an independent set.
\end{proof}

The following lemma, together with Lemma \ref{W}, shows that $G$ is a join of a complete graph $K_{k-1}$ (on the vertex set $W$) and an $r$-partite graph.

\begin{lem}\label{W-dom}
For every vertex $w\in W$, $d_{G}(w) = n-1$.
\end{lem}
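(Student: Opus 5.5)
We argue by contradiction using the extremality of $G$ together with the monotonicity Lemma \ref{Khan-rho}. Suppose some $w\in W$ has a non-neighbor $z$. Let $G'=G+wz$. If $G'$ is $kK_{r+1}$-free, then, since $G$ is $t$-clique connected (Lemma \ref{t-clq-conn}) and so is $G'$, Lemma \ref{Khan-rho} gives $\rho_t(G')>\rho_t(G)$ provided $wz$ lies in at least one $t$-clique of $G'$. This contradicts the choice of $G$. The whole proof therefore reduces to two checks: that $G+wz$ is still $kK_{r+1}$-free, and that $wz$ creates a new $t$-clique.

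\emph{Step 1: the structure of $G-W$.} By Lemma \ref{W}, $|W|=k-1$ and each $V_i\setminus W$ is independent, so $G-W$ is $r$-partite with parts $V_i\setminus W$. Hence every copy of $K_{r+1}$ in $G$, and also in $G+wz$ for any pair $wz$ with $w\in W$, must contain a vertex of $W$. The reason is that a set of $r+1$ vertices avoiding $W$ has two vertices in a common independent part, and the added edge $wz$ has an endpoint in $W$.

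\emph{Step 2: $G+wz$ is $kK_{r+1}$-free.} Since $|W|=k-1$ and every $K_{r+1}$ in $G+wz$ meets $W$, any family of vertex-disjoint copies of $K_{r+1}$ has at most $k-1$ members. So $G+wz$ is $kK_{r+1}$-free. In fact the same reasoning lets us add \emph{all} missing edges incident to $W$ at once, which may be cleaner: let $G^{*}$ be $G$ with $W$ made complete and joined to everything.

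\emph{Step 3: a new $t$-clique.} We must check that the added edge $wz$ lies in some $t$-clique. Since $L=\emptyset$ (Lemma \ref{LO}), we have $\delta(G)\ge(1-\frac1r-\varepsilon_1)n$. By the common-neighborhood argument of \eqref{S-neigh}, the vertices $w$ and $z$ have a large common neighborhood inside the $r$-partite graph $G-W$, containing a complete $(t-2)$-partite piece with parts of linear size. This gives many $t$-cliques through $wz$ in $G+wz$. If $z\in W$, we use instead that both $w$ and $z$ have linear degree into each $V_j\setminus W$. In particular $\A_t(G)<\A_t(G+wz)$, and $G+wz$ is $t$-clique connected, so Lemma \ref{Khan-rho} applies.

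The main obstacle is the case $t=2$ or a boundary case where the common neighborhood must be guaranteed across parts. For $w\in W\cap V_i$ we know $d_{V_i}(w)\ge2\theta n$, but the degree of $w$ into the other parts is less explicit. Only the trivial bound is needed, however: the edge $wz$ is itself a $2$-clique, and for $t\ge3$ we need only one common neighbor. This follows from $d(w)+d(z)\ge 2(1-\frac1r-\varepsilon_1)n>n$ when $r\ge2$, and iterating with Lemma \ref{set-cap} yields a $(t-2)$-clique in the common neighborhood.
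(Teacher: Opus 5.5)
Your proof is correct and follows the paper's argument. You add the missing edge $wz$, note that $G+wz$ stays $kK_{r+1}$-free because every $K_{r+1}$ must meet $W$ and $|W|=k-1$, verify that $wz$ lies in a new $t$-clique, and conclude with Lemma \ref{Khan-rho}. Your explicit check of the new $t$-clique, using $\delta(G)\ge(1-\frac{1}{r}-\varepsilon_1)n$ and Lemma \ref{set-cap}, is sound and more explicit than the paper's appeal to ``the same argument''. One slip: $d(w)+d(z)>n$ needs $r\ge 3$, not $r\ge 2$, but this is harmless since $t\ge 3$ forces $r\ge 3$.
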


\begin{proof}
Suppose, to the contrary, that there exists $w\in W$ such that $d_{G}(w) < n-1$.
Then there exists a vertex $v\in V (G)$ such that $wv\notin E(G)$.
Let $G'$ be obtained from $G$ by adding the edge $wv$.
Since $|W|=k-1$ and each $G[V_i \setminus W]$ is an independent set for $i \in [r]$, it follows that $G'$ is still $kK_{r+1}$-free.
Moreover, by Lemma \ref{t-clq-conn}, $G$ is $t$-clique connected, and hence so is $G'$.
In addition, by the same argument as in Lemma \ref{v-indeg} (or Lemma \ref{W}), the graph $G'$ contains a $t$-clique using the edge $wv$.
Therefore, by Lemma \ref{Khan-rho}, we have $\rho_{t}(G')> \rho_{t}(G)$, contradicting the extremality of $G$.
Hence $d_G(w) = n-1$ for all $w \in W$.
\end{proof}

With the above preparations, we are ready to prove the main theorem.

\begin{proof}[\bf Proof of Theorem \ref{Main}]
We divide the proof into two claims.

\begin{claim}\label{C1}
$G \cong K_{k-1} \vee K_{r}(s_{1}, s_{2}, \ldots, s_{r})$,
where $K_{r}(s_{1}, s_{2}, \ldots, s_{r})$ denotes the complete $r$-partite graph with part sizes $s_{1}, \ldots, s_{r}$.
\end{claim}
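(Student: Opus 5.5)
By Lemma \ref{W} and Lemma \ref{W-dom}, $W$ induces a clique $K_{k-1}$, every vertex of $W$ is adjacent to all other vertices, and each $V_i\setminus W$ is independent. So $G=G[W]\vee G[V(G)\setminus W]$ with $G[W]\cong K_{k-1}$, and $G-W$ is an $r$-partite graph with parts $U_i:=V_i\setminus W$, $s_i:=|U_i|$. It remains to show that $G-W$ is the complete $r$-partite graph on $U_1,\ldots,U_r$. Suppose not, and pick $u\in U_i$, $v\in U_j$ with $i\neq j$ and $uv\notin E(G)$. The plan is to add the edge $uv$ and derive a contradiction with the extremality of $G$.

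Let $G'=G+uv$. First we check that $G'$ is still $kK_{r+1}$-free. Since each $U_\ell$ is independent in $G'$, $G'-W$ is still $r$-partite and hence $K_{r+1}$-free. So every $K_{r+1}$ in $G'$ contains a vertex of $W$. Because $|W|=k-1$, $G'$ contains at most $k-1$ vertex-disjoint copies of $K_{r+1}$.

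Next we show that $\rho_t(G')>\rho_t(G)$. The graph $G'$ is $t$-clique connected because $G$ is (Lemma \ref{t-clq-conn}) and we only added an edge. We need $\A_t(G)<\A_t(G')$, that is, some $t$-clique of $G'$ uses the edge $uv$.
\begin{itemize}
\item If $t=2$, the edge $uv$ itself is such a clique.
\item If $t\ge 3$, pick any $w\in W$, which exists since $k\ge2$. Then $\{u,v,w\}$ is a triangle. To reach a $t$-clique, add one common neighbour from each of $t-3$ further parts $U_\ell$ with $\ell\neq i,j$; there are $r-2\ge t-2$ such parts, and every $U_\ell$ is nonempty and large by Lemma \ref{Ni-Turan}(a). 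Alternatively, use additional vertices of $W$.
\end{itemize}
The delicate point is that these chosen vertices must be pairwise adjacent and adjacent to $u$ and $v$. This follows from the minimum degree bound $\delta(G)\ge(1-\frac1r-\varepsilon_1)n$ (as $L=\emptyset$) and the common-neighbourhood argument of \eqref{S-neigh}: $u$ and $v$ have $\Omega(n)$ common neighbours in each other part $U_\ell$, and we extend greedily. Lemma \ref{Khan-rho} then gives $\rho_t(G')>\rho_t(G)$, contradicting the choice of $G$.

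Hence $G-W\cong K_r(s_1,\ldots,s_r)$ and $G\cong K_{k-1}\vee K_r(s_1,\ldots,s_r)$. The main obstacle is producing a $t$-clique through the new edge when $t$ is large, since one must keep $t\le r$ vertices mutually adjacent. The greedy common-neighbourhood count from Lemma \ref{v-indeg} handles this, because almost every vertex of a part is adjacent to almost every vertex of the other parts.
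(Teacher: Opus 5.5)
Your proposal is correct and follows the paper's route. You add the missing cross edge $uv$, observe that $G'-W$ is still $r$-partite so every $K_{r+1}$ meets the $(k-1)$-set $W$, and conclude $\rho_t(G')>\rho_t(G)$ via Lemma~\ref{Khan-rho} once some $t$-clique uses $uv$. The paper simply defers that last step to the argument of Lemma~\ref{W-dom}. You spell it out instead, using a dominating vertex of $W$ plus greedy common neighbours in $t-3$ further parts, which exist since $t\le r$ and $\delta(G)\ge(1-\frac1r-\varepsilon_1)n$; this fills in a detail the paper leaves implicit rather than changing the approach.
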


By Lemmas \ref{W} and \ref{W-dom}, we have $G = K_{k-1} \vee H$, where $W=V(K_{k-1})$ and $H$ is an $r$-partite graph with parts $V_1\setminus W, \ldots, V_r\setminus W$.
Let $s_i=|V_i \setminus W|$ for $i \in [r]$.
Suppose that $H \not\cong K_{r}(s_{1}, s_{2}, \ldots, s_{r})$.
Then there exist vertices $u \in V_{i} \setminus W$ and $v \in V_{j} \setminus W$ with $i \neq j$ such that $uv \notin E(G)$.
Let $G'$ be the graph obtained from $G$ by adding the edge $uv$.
Following a similar argument as in the proof of Lemma \ref{W-dom}, $G'$ remains $kK_{r+1}$-free satisfying $\rho_t(G') > \rho_t(G)$, a contradiction.
Hence $H \cong K_{r}(s_{1}, s_{2}, \ldots, s_{r})$.

Without loss of generality, assume $s_1 \ge s_2 \ge \cdots \ge s_r$.

\begin{claim}\label{C2}
If there exist $i$ and $j$ with $1\leq i < j\leq r$ such that $s_{i}-s_{j}\geq 2$,
then
$$\rho_{t}(K_{k-1} \vee K_{r}(s_{1},\ldots, s_{i}-1, \ldots, s_{j}+1,\ldots,s_{r}))
> \rho_{t}(K_{k-1} \vee K_{r}(s_{1},\ldots, s_{i}, \ldots, s_{j},\ldots,s_{r})).$$
\end{claim}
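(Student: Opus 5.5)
We compare the two graphs through the Rayleigh-type characterization of Lemma \ref{QiSpec}, using the Perron vector of the unbalanced graph. Let $G_1=K_{k-1}\vee K_r(s_1,\ldots,s_r)$ and let $G_2$ be the graph with $s_i,s_j$ replaced by $s_i-1,s_j+1$. $G_1$ is $t$-clique connected, so by Lemma \ref{Liu-conn} it has a positive eigenvector $x$ with $\|x\|_t=1$. Since $x$ is unique up to scaling and $\A_t(G_1)$ is invariant under automorphisms of $G_1$, $x$ is constant on each class of equivalent vertices. So $x_w=a$ on $W=V(K_{k-1})$, and $x_v=b_\ell$ on the $\ell$-th part $V_\ell$.

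We realize $G_2$ by moving one vertex $u\in V_i$ into $V_j$. Concretely, we delete the edges from $u$ to $V_j$ and add the edges from $u$ to $V_i\setminus\{u\}$. Then $x^\top\A_t(G_2)x^{t-1}-x^\top\A_t(G_1)x^{t-1}=t\,x_u(P_i-P_j)$, where $P_j$ (resp. $P_i$) is the sum of $x^{e}$ over the $(t-1)$-sets $e$ that form a $t$-clique with $u$ and meet $V_j$ (resp. meet $V_i\setminus\{u\}$) in exactly one vertex. If we show $P_i>P_j$, then $\rho_t(G_2)\ge x^\top\A_t(G_2)x^{t-1}>\rho_t(G_1)$, which proves the claim.

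To compare $P_i$ and $P_j$, we first compare $b_i$ and $b_j$. Write the eigen-equations at $v\in V_i$ and $v'\in V_j$: $\rho b_i^{t-1}=F(\hat V_i)$ and $\rho b_j^{t-1}=F(\hat V_j)$, where $F$ is the weighted $(t-1)$-clique polynomial on $W$ together with the other parts. Splitting off the contributions of $V_j$ and $V_i$ gives
$$\rho\,(b_i^{t-1}-b_j^{t-1})=(s_jb_j-s_ib_i)\,Q,$$
where $Q>0$ is the common contribution of cliques that use one vertex from the "other" part, with $t-2$ further vertices from $W$ and the remaining parts. Suppose $b_i\ge b_j$. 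Then the left side is $\ge0$, while $s_i\ge s_j+2$ forces $s_jb_j-s_ib_i<0$, a contradiction. Hence $b_i<b_j$, and more precisely $s_ib_i>s_jb_j$ follows from the same identity with signs.

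We now compute $P_i$ and $P_j$ with the same $Q$-type expression: $P_j=s_jb_jQ'$ and $P_i=(s_i-1)b_iQ'$, where $Q'$ is the clique polynomial of degree $t-2$ on $W$ and the parts other than $V_i,V_j$. So it suffices to show $(s_i-1)b_i>s_jb_j$. This is the main obstacle, because the inequality $s_ib_i>s_jb_j$ from the eigen-equations loses exactly one $b_i$.

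To close this gap, we use the exact identity rather than signs alone. Writing $\rho$ and $Q$ as of order $n^{t-1}$ and $n^{t-2}$, the identity gives $s_ib_i-s_jb_j=\rho(b_j^{t-1}-b_i^{t-1})/Q$. Since $b_j^{t-1}-b_i^{t-1}\ge(t-1)b_i^{t-2}(b_j-b_i)$, this yields $s_ib_i-s_jb_j\gtrsim \frac{\rho}{Q}b_i^{t-2}(b_j-b_i)$ with $\rho/Q=\Theta(n)$. We then need to show that this exceeds $b_i$. We plan to solve for $b_j-b_i$ by linearizing: $b_j-b_i\approx\frac{(s_i-s_j)b\,Q}{\rho(t-1)b^{t-2}+sQ}$. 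Substituting back gives
$$s_ib_i-s_jb_j\approx(s_i-s_j)b\cdot\frac{\rho(t-1)b^{t-2}}{\rho(t-1)b^{t-2}+sQ}.$$
With $s_i-s_j\ge2$, we need this fraction to exceed $1/2$. Here Lemma \ref{ev} (all entries are bounded below by constants) and $\rho b^{t-2}\approx$ the known Turán-type sums make the fraction tend to a constant, which we will check is $>1/2$. The case $t=2$ is exact and yields $\frac{\rho}{\rho+s}\cdot 2>1$. If the asymptotic check is delicate, we will fall back on proving $(s_i-1)b_i\ge s_jb_j$ directly from the eigen-equations, by comparing the equation at $V_i$ in $G_1$ with a symmetric configuration.
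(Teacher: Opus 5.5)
Your setup and reduction are correct and match the paper's proof step for step. This covers the symmetric positive eigenvector and the one-vertex move giving $t\,b_i\bigl((s_i-1)b_i-s_jb_j\bigr)Q$; your $Q'$ is the same quantity as $Q$, the paper's $\beta_1$. It also covers the identity $\rho(b_i^{t-1}-b_j^{t-1})=(s_jb_j-s_ib_i)Q$ and the conclusion $b_i<b_j$.

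The gap is the decisive step $(s_i-1)b_i>s_jb_j$, which you do not prove.
\begin{itemize}
\item The asymptotic plan is written with $\approx$ and no error terms, and the limiting constant is never computed.
\item Lemma~\ref{ev} gives only a uniform constant lower bound on the entries. It does not give the near-equality of the $b_\ell$ and of the $s_\ell$ that evaluating your fraction would require.
\item For balanced parts your fraction tends to $(r-1)/r$, which is exactly $1/2$ when $r=2$. So there is no slack there, and everything rests on your $t=2$ remark. That remark's inequality $2\rho/(\rho+s_j)>1$ needs $\rho>s_j$, which you assert without justification.
\item The closing fallback sentence is not an argument.
\end{itemize}

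The missing idea is an exact bound read off from the eigen-equation at a vertex of $V_i$. There $\rho b_i^{t-1}=s_jb_jQ+\beta_2\ge s_jb_jQ>s_jb_iQ$, where $\beta_2\ge 0$ collects the cliques through that vertex avoiding $V_j$. Hence $\rho b_i^{t-2}>s_jQ$.

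With this, your own computation becomes exact:
\begin{itemize}
\item Put $m=(b_j^{t-1}-b_i^{t-1})/(b_j-b_i)$, so $m\ge(t-1)b_i^{t-2}\ge b_i^{t-2}$.
\item Substituting $s_ib_i-s_jb_j=(s_i-s_j)b_i-s_j(b_j-b_i)$ into the identity gives, with no approximation, $s_ib_i-s_jb_j=(s_i-s_j)b_i\cdot\frac{\rho m}{\rho m+s_jQ}$.
\item So $(s_i-1)b_i>s_jb_j$ is equivalent to $(s_i-s_j-1)\rho m>s_jQ$.
\item This holds because $s_i-s_j-1\ge 1$ and $\rho m\ge\rho b_i^{t-2}>s_jQ$.
\end{itemize}

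The paper uses the same bound in a different packaging. It multiplies $(s_i-1)b_i-s_jb_j$ by the positive quantity $\rho b_j^{t-1}+s_jb_jQ=\rho b_i^{t-1}+s_ib_iQ$. Using $s_i-1\ge s_j+1$, it reduces the sign to that of $s_j\rho(b_j^{t-2}-b_i^{t-2})+\rho b_j^{t-2}-s_jQ$, which is positive.

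Either way the argument is purely algebraic. It holds for arbitrary part sizes, with no appeal to large $n$, near-balance, or Lemma~\ref{ev}. Your asymptotic route could at best cover the near-balanced regime, and only after error control you have not supplied.
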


Let $G_{1} = K_{k-1} \vee K_{r}(s_{1},\ldots, s_{r})$.
Partition $V(G_{1})$ as $W\cup V_{1}\cup\cdots\cup V_{r}$, where
$W=V(K_{k-1})$, and $V_{i}$ is the part of $K_{r}(s_{1},\ldots, s_{r})$ of size $s_i$ for $i \in [r]$.
Clearly, $G_{1}$ is $t$-clique connected.
Let $x$ be a positive eigenvector of $\A_t(G_1)$ corresponding to $\rho_{t}(G_{1})$.
By symmetry, we may assume
$x|_{W}=\alpha_0$ and $x|_{V_i}=\alpha_i$ for $i \in [r]$.
Note that for any two distinct pairs $(u_1, v_1), (u_2, v_2) \in V_i \times V_j$,
 $N(u_1) \cap N(v_1) = N(u_2) \cap N(v_2)$.
Thus, for any $u\in V_{i}$ and $v\in V_{j}$,  we can define
$$\beta_{1}= \sum_{\substack{\{u,v,i_{3},\ldots,i_{t}\}\in C_{t}(G_{1})}} x_{i_{3}}\cdots x_{i_{t}},$$
and
$$\beta_{2}= \sum_{\substack{\{u,i_{2},\ldots,i_{t}\}\in C_{t}(G_{1}) \\ \{i_{2},\ldots,i_{t}\}\cap V_{j}=\emptyset}} x_{i_{2}}\cdots x_{i_{t}} = \sum_{\substack{\{v,i_{2},\ldots,i_{t}\}\in C_{t}(G_{1}) \\ \{i_{2},\ldots,i_{t}\}\cap V_{i}=\emptyset}} x_{i_{2}}\cdots x_{i_{t}}.$$
Furthermore, by the eigenvector equation at the vertex $u$,
\begin{equation}\label{ev-ai}
\rho_{t}(G_{1})\alpha_i^{t-1}=\sum_{\{u,i_{2},\ldots,i_{t}\}\in C_{t}(G_{1})}x_{i_{2}}\cdots x_{i_{t}}=
s_{j}\alpha_j \beta_{1}+\beta_{2},
\end{equation}
and by the eigenvector equation at the vertex $v$,
\begin{equation}\label{ev-aj}
\rho_{t}(G_{1})\alpha_j^{t-1}=\sum_{\{v,i_{2},\ldots,i_{t}\}\in C_{t}(G_{1})}x_{i_{2}}\cdots x_{i_{t}}=
s_{i}\alpha_i\beta_{1}+\beta_{2}.
\end{equation}
Combining  (\ref{ev-ai}) and  (\ref{ev-aj}), we obtain
$$\rho_{t}(G_{1})(\alpha_i^{t-1}-\alpha_j^{t-1})=(s_{j}\alpha_j-s_{i}\alpha_i)\beta_1.$$
So, if $\alpha_i\geq \alpha_j$, then $s_{j} \geq s_{i}$, contradicting $s_{i} \geq s_{j}+2$.
Hence $\alpha_i < \alpha_j$.

Let $G_{2} = K_{k-1} \vee K_{r}(s_{1},\ldots, s_{i}-1, \ldots, s_{j}+1,\ldots,s_{r})$,
and set $E_{1}=E(G_{1})\setminus E(G_{2})$ and $E_{2}=E(G_{2})\setminus E(G_{1})$.
Then, we have
\begin{equation}\label{rho-G1G2}
\begin{aligned}
\|x\|_t^t\left(\rho_{t}(G_{2})-\rho_{t}(G_{1})\right)
&\geq x^\top\A_t(G_{2})x^{t-1}-x^\top\A_t(G_{1})x^{t-1}\\
 &=t\Bigg(\sum_{\{i_{1},\ldots,i_{t}\}\in C_{t}(G_{2})}x_{i_{1}}\cdots x_{i_{t}}-\sum_{\{i_{1},\ldots,i_{t}\}\in C_{t}(G_{1})}x_{i_{1}}\cdots x_{i_{t}}\Bigg)\\
&= t\Bigg(\sum_{\{i_{1},i_{2}\}\in E_{2}}x_{i_{1}}x_{i_{2}}
 -\sum_{\{i_{1},i_{2}\}\in E_{1}}x_{i_{1}}x_{i_{2}}\Bigg)\beta_{1}\\
 &= t\left((s_{i}-1)\alpha_i^{2}-s_{j}\alpha_i\alpha_j\right)\beta_{1}.
\end{aligned}
\end{equation}
It remains to show $(s_{i}-1)\alpha_i - s_{j}\alpha_j > 0$.

By \eqref{ev-ai} and \eqref{ev-aj}, we have
\begin{equation}\label{rho_uv}
\rho_{t}(G_{1})\alpha_i^{t-1}+s_{i}\alpha_i \beta_{1}=\rho_{t}(G_{1})\alpha_j^{t-1}+s_{j}\alpha_j \beta_{1}>0.
\end{equation}
Since $\alpha_i< \alpha_j$, by \eqref{ev-ai}, we can derive
\begin{equation}\label{rho_ab}
\rho_{t}(G_{1})\alpha_j^{t-2}>\rho_{t}(G_{1})\alpha_i^{t-2}=s_{j}\beta_{1}\frac{\alpha_j}{\alpha_i}+\frac{\beta_{2}}{\alpha_i}>s_{j} \beta_{1}.
\end{equation}
Consequently, by \eqref{rho_uv}, \eqref{rho_ab} and the fact $s_i-1 \ge s_j +1$, we have
\begin{align*}
&\big((s_{i} - 1)\alpha_i - s_{j} \alpha_j\big)\big(\rho_{t}(G_{1})\alpha_j^{t-1} + s_{j} \alpha_j \beta_1\big) \\
&= (s_{i} - 1)\alpha_i\big(\rho_{t}(G_{1})\alpha_j^{t-1} + s_{j} \alpha_j \beta_1\big) - s_{j} \alpha_j\big(\rho_{t}(G_{1})\alpha_i^{t-1} + s_{i} \alpha_i \beta_1\big) \\
&= \alpha_i \alpha_j\left((s_{i} - 1)\big(\rho_{t}(G_{1})\alpha_j^{t-2} + s_{j} \beta_1\big) - s_{j}\big(\rho_{t}(G_{1})\alpha_i^{t-2} + s_{i} \beta_1\big)\right) \\
&= \alpha_i \alpha_j\left(\rho_{t}(G_{1})\alpha_j^{t-2}(s_{i} - 1) - s_{j}\big(\rho_{t}(G_{1})\alpha_i^{t-2} + \beta_1\big)\right) \\
&\geq \alpha_i \alpha_j\left(\rho_{t}(G_{1})\alpha_j^{t-2}(s_{j} + 1) - s_{j}\big(\rho_{t}(G_{1})\alpha_i^{t-2} + \beta_1\big)\right) \\
&= \alpha_i \alpha_j\left(s_{j}\rho_{t}(G_{1})(\alpha_j^{t-2} - \alpha_i^{t-2}) + \rho_{t}(G_{1})\alpha_j^{t-2} - s_{j} \beta_1\right) \\
&> 0.
\end{align*}
Since $(\rho_{t}(G_{1})\alpha_j^{t-1} + s_{j} \alpha_j \beta_1)>0$,  we have $(s_{i}-1)\alpha_i - s_{j}\alpha_j > 0$.
So, $\rho_{t}(G_{2}) >\rho_{t}(G_{1})$ by \eqref{rho-G1G2}, as desired.

Finally,  Claims \ref{C1} and Claim \ref{C2} together imply that
if $G$ is a $kK_{r+1}$-free graph on $n$ vertices with maximum $t$-clique spectral radius, then
$G \cong K_{k-1} \vee T_{r}(n-k+1)$.
This completes the proof of Theorem \ref{Main}.
\end{proof}


\begin{thebibliography}{99}
\setlength{\itemsep}{0pt}
\small

\bibitem{Chen}
M. Chen, X. Zhang, Some new results and problems in spectral extremal graph theory, \emph{J. Anhui Univ. Nat. Sci.}, 42 (2018), 12--25.

\bibitem{Chv}
V. Chv\'atal, D. Hanson, Degrees and matchings. \emph{J. Combin. Theory Ser. B}, 20 (2) (1976), 128--138.

\bibitem{Cio2}
S. Cioab\u{a}, D. N. Desai, M. Tait, The spectral even cycle problem, \emph{Combin. Theory}, 4 (1) (2024), 10.

\bibitem{Cio}
S. Cioab\u{a}, L. Feng, M. Tait, X. Zhang, The maximum spectral radius of graphs without friendship subgraphs, \emph{Electron. J. Combin.}, 27 (4) (2020), P4.22.

\bibitem{Cooper}
J. Cooper, A. Dutle, Spectra of uniform hypergraphs, \emph{Linear Algebra Appl.}, 436 (2012), 3268--3292.

\bibitem{Fang}
L. Fang, H. Lin, Spectral extremal results on edge blow-up of graphs, \emph{Discrete Math.}, 349 (2) (2026), 114835.

\bibitem{Friedland}
S. Friedland, S. Gaubert, L. Han, Perron-Frobenius theorem for nonnegative multilinear forms and extensions, \emph{Linear Algebra Appl.}, 438 (2013), 738--749.

\bibitem{Gernber2}
D. Gerbner, Some exact results for non-degenerate generalized Tur\'an problems,  \emph{Electron. J. Combin.}, 30(4)(2023), P4.39.

\bibitem{Gernber}
D. Gernber, Generalized Tur\'an results for disjoint cliques, \emph{Discrete Math.}, 347(7)(2024), 114024.

\bibitem{Gernber3}
D. Gerbner, C. Palmer, Survey of generalized Tur\'an problems - counting subgraphs, \emph{Electron. J. Combin.}, 2026, Dynamic Surveys, DS27.

\bibitem{Khan}
M. Khan, Y.-Z. Fan, On the spectral radius of a class of non-odd-bipartite even uniform hypergraphs,
\emph{Linear Algebra Appl.}, 480(2015), 93--106.

\bibitem{Li}
Y. Li, W. Liu, L. Feng, A survey on spectral conditions for some extremal graph problems, \emph{Advances in Math. (China)}, 51 (2) (2022), 193--258.

\bibitem{Lim}
L. Lim, Singular values and eigenvalues of tensors: a variational approach, \emph{1st IEEE International Workshop on Computational
Advances in Multi-Sensor Adaptive Processing}, 2005, 129--132.

\bibitem{Liu3}
X. Liu, Spectral generalized Tur\'an problems, arXiv:2507.21689.

\bibitem{Liu2}
C. Liu, C. Bu,  On a generalization of the spectral Mantel's theorem,  \emph{J. Comb. Optim.}, 46(2)(2023), 14.

\bibitem{Liu}
C. Liu, C. Bu, A tensor's spectral bound on the clique number, \emph{Discrete Math.}, 349 (2026), 114694.

\bibitem{Liu.R}
R. Liu, L. Miao, Spectral Tur\'an problem of non-bipartite graphs: Forbidden books, \emph{European J. Combin.}, 126 (2025), 104136.

\bibitem{Liu.L}
L. Liu, B. Ning, A spectral analogue of Ore's problem on Tur\'an theorem, \emph{Linear Algebra Appl.}, 735 (3) (2026), 112--122.

\bibitem{Moon}
J. Moon, On independent complete subgraphs in a graph, \emph{Canad. J. Math.}, 20(1968), 95--102.

\bibitem{Ni}
Z. Ni, J. Wang, L. Kang, Spectral extremal graphs for disjoint cliques, \emph{Electron. J. Combin.}, 30(1) (2023), P1.20.

\bibitem{Nikiforov2}
V. Nikiforov, Bounds on graph eigenvalues II, \emph{Linear Algebra Appl.} 427 (2007), 183--189.

\bibitem{Nikiforov}
V. Nikiforov, The spectral radius of graphs without paths and cycles of specified length, \emph{Linear Algebra Appl.},  432 (2010), 2243--2256.

\bibitem{Qi1}
L. Qi, Eigenvalues of a real supersymmetric tensor, \emph{J. Symb. Comput.}, 40 (2005), 1302--1324.

\bibitem{Qi2}
L. Qi, Symmetric nonnegative tensors and copositive tensors, \emph{Linear Algebra Appl.}, 439 (2013), 228--238.

\bibitem{Re}
A. Rehman, S. Pirzada, The spectral Tur\'an problem about graphs of given size with forbidden subgraphs, \emph{AKCE Int. J. Graphs Comb.}, 22 (1) (2025), 91--93.

\bibitem{Shao}
J. Shao, H. Shan, L. Zhang, On some properties of the determinants of tensors, \emph{Linear Algebra Appl.}, 439 (2013), 3057--3069.

\bibitem{Simonovits}
M. Simonovits, A method for solving extremal problems in graph theory, stability problems, \emph{Theory of Graphs} (Proc. Colloq., Tihany, 1996), Academic Press, New York, 1968, 279--319.

\bibitem{Turan}
P. Tur\'an, On an extremal problem in graph theory, \emph{Mat. Lapok}, 48 (1941) 436--452.

\bibitem{Peng3}
Z. Yan, B. Yang, Y. Peng, A spectral generalized Alon-Frankl theorem, \emph{Discrete Math.}, 349 (2) (2026), 114785.

\bibitem{Yang2}
Y. Yang, Q. Yang, Further results for Perron-Frobenius theorem for nonnegative tensors, \emph{SIAM J. Matrix Anal. Appl.}, 31 (2010), 2517--2530.

\bibitem{Yang1}
Y. Yang, Q. Yang, On some properties of nonnegative weakly irreducible tensors, arXiv: 1111.0713v1.

\bibitem{Peng}
L. Yu, Y. Peng, A spectral version of the theorem of Zykov and Erd\H{o}s, \emph{Electron. J. Combin.}, 32 (4) (2025), 2517--2530.

\bibitem{Peng2}
L. Yu, Y. Peng, A spectral generalized Erd\H{o}s-Gallai theorem, \emph{Discrete Math.}, 349 (1) (2026), 114672.

\bibitem{Zhai}
M. Zhai, M. Liu, Extremal problems on planar graphs without $k$ edge-disjoint cycles, \emph{Adv. Appl. Math.}, 157 (2024), 102701.

\bibitem{Zykov}
A.A. Zykov, On some properties of linear complexes, \emph{Matematicheskii Sbornik}, 66(2) (1949), 163--188.

\end{thebibliography}
\end{document}